\newtheorem{theorem}{Theorem}[section]
\newtheorem{lemma}[theorem]{Lemma}
\newtheorem{proposition}[theorem]{Proposition}
\newtheorem{remark}{Remark}
\newcommand{\R}{\mathbb{R}}
\newcommand{\Rd}{\mathbb{R}^d}
\newcommand{\iRd}{\int_{\mathbb{R}^d}}
\newcommand{\japangle}[1]{\left\langle #1\right\rangle}
\newcommand{\dvn}{\nabla_v}
\newcommand{\dvnstar}{\nabla_{v_*}}
\newcommand{\rd}{\mathrm{d}}
\title{Convergence of a particle method for a regularized spatially homogeneous Landau equation}
\author{Jos\'e A. Carrillo\thanks{Mathematical Institute, University of Oxford, Oxford OX2 6GG, UK (carrillo@maths.ox.ac.uk)}, Matias G. Delgadino\thanks{Department of Mathematics, The University of Texas at Austin, Texas, USA (matias.delgadino@math.utexas.edu)}, Jeremy S. H. Wu\thanks{Mathematical Sciences Building, University of California, Los Angeles, USA (jeremywu@math.ucla.edu)}}
\begin{document}
\maketitle

\begin{abstract}
	We study a regularized version of the Landau equation, which was recently introduced in~\cite{CHWW20} to numerically approximate the Landau equation with good accuracy at reasonable computational cost. We develop the existence and uniqueness theory for weak solutions, and we reinforce the numerical findings in~\cite{CHWW20} by rigorously proving the validity of particle approximations to the regularized Landau equation.
\end{abstract}

\section{Introduction} 
The Landau equation \cite{Landau}, originally derived to approximate the Boltzmann operator when collisions between charged particles in a plasma are grazing, is one of the fundamental kinetic equations in plasma physics. Efficient computational methods for the full Vlasov-Maxwell-Landau system are of tremendous importance for modelling future fusion reactors and they represent a central conundrum in computational plasma physics. An important foundation to achieve such an ambitious goal is to provide accurate numerical methods with low computational cost to solve the collisional step in these computations, that is, to solve the spatially homogeneous Landau equation given by
\begin{equation} 
	\label{landau}
	\partial_t f=Q(f,f):= \nabla_v \cdot \left\{ \int_{\R^d} A(v-v_*) \left( f(v_*) \dvn f(v) - f(v) \dvnstar f(v_*) \right) \rd v_*\right\}\,,
\end{equation}
with the collision kernel given by $A(z) = |z|^{\gamma} \left(|z|^2 I_d - z \otimes z \right)= |z|^{\gamma+2} \Pi (z)$ with $I_d$ being the identity matrix, $\Pi(z)$ the projection matrix into $\{z\}^\perp$, $-d-1\leq \gamma \leq 1$, and $d\geq 2$. The most important case corresponds to $d=3$ with $\gamma = -3$ associated with the physical interaction in plasmas. This case is usually called the Coulomb case because it can be derived from the Boltzmann equation in the grazing collision limit when particles interact via Coulomb forces \cite{DLD92,Vi98,CDW22}. The main formal properties of $Q$ rely on the following reformulation  
\begin{equation*}
	Q(f,f) = \dvn \cdot  \left\{ \int_{\R^d} A(v-v_*) f f_* \left (  \dvn \log f - \dvnstar \log f_* \right)\,  \rd v_*\right\}\,,
\end{equation*}
where $f=f(v)$, $f_*=f(v_*)$ are used; and its weak form acting on appropriate test functions $\phi = \phi(v)$ 
\begin{equation} \label{weak}
	\int_{\R^d} Q(f,f) \phi \,\rd{v}= -\frac12 \iint_{\R^{2d}} (\dvn \phi - \dvnstar \phi_*)  \cdot  A(v-v_*) \left(  \dvn \log f - \dvnstar \log f_*  \right) f f_* \, \rd v \,\rd v_*\,.
\end{equation}
Then choosing $\phi(v) = 1, v, |v|^2$, one achieves conservation of mass, momentum and energy. Inserting $\phi(v) = \log f (v)$, one obtains the formal entropy decay with dissipation given by
\begin{equation*}
	\frac{\rd}{\rd t}\int_{\mathbb{R}^d}f \log f \,\rd{v} = -D(f(t,\cdot)):= -\frac{1}{2}\iint_{\R^{2d}} B_{v,v_*} \cdot A(v-v_*)B_{v,v_*}ff_* \, \rd v\rd v_*\leq 0\,,
\end{equation*}
since $A$ is symmetric and semipositive definite, with $B_{v,v_*} := \dvn \log f - \dvnstar \log f_*$. The equilibrium distributions are given by the Maxwellian 
\[
\mathcal{M}_{\rho, u, T} = \frac{\rho}{(2\pi T)^{d/2}} \text{exp} \left(- \frac{ |v-u|^2}{2T} \right),
\]
for some constants $\rho, T$ determining the density and the temperature of the particle ensemble, and mean velocity vector $u$, see \cite{Vi98,GZ}.

Deterministic numerical methods based on particle approximations to \eqref{landau} have been recently proposed in \cite{CHWW20} keeping all the structural properties of the Landau equation described above: nonnegativity, conservation of mass, momentum and energy, and entropy dissipation at a semidiscrete level. This paper gives a theoretical underpinning to the numerical sheme introduced in \cite{CHWW20}. The main strategy is to delocalize the gradient operators in the weak form \eqref{weak} while keeping intact the variational structure behind the equation rigorously developed in \cite{CDDW20}. This is reminiscent of similar approaches to approximate nonlinear diffusion models by nonlocal equations \cite{CCP18} while keeping their variational structure. More precisely, we analyse the Landau gradient flow of the regularized entropy~\cite{CHWW20,CDDW20} given by
\begin{equation}
	\label{eq:epslan}
	\partial_t f = \nabla_v \cdot \left\{
	f(v) \iRd f(w) A(v-w)(\nabla G^\varepsilon * \log [f*G^\varepsilon](v) - \nabla G^\varepsilon * \log [f*G^\varepsilon](w)) \, \rd w
	\right\},
\end{equation}
where $ G^\varepsilon\in C^\infty(\R^d)$ is a mollifier for fixed $\varepsilon>0$. More specifically, 
\[
G^\varepsilon(v) = \frac{1}{\varepsilon^d} G\left(
\frac{v}{\varepsilon}
\right), \quad \int_{\R^d} G(v)dv = 1,
\]
with $0\le G\in C^\infty(\R^d)$, so that $G^\varepsilon$ approximates the Dirac at the origin, $\delta_0$, as $\varepsilon \downarrow 0$. Therefore, as $\varepsilon \downarrow 0$ \eqref{eq:epslan} formally converges to the Landau equation. For technical reasons (c.f.~\Cref{lem:difflog}), we choose $G(v) = Ce^{-(1+|v|^2)^{1/2}}$ as in \cite{CDDW20}. However, we note that from the numerical point of view~\cite{CHWW20}, Gaussian mollifiers are simpler to deal with. 

Our approach is to provide an existence theory for \eqref{eq:epslan} as well as a particle approximation to the solution by interpreting~\eqref{eq:epslan} as a continuity equation with solution-dependent velocity fields. In particular, to introduce notation, we define a generalised interaction kernel for probability measures $g\in \mathscr{P}(\R^d)$ and $v, w \in \R^d$
\[
K_g(v,w) := - |v-w|^{2+\gamma}\Pi[v-w] (\nabla G^\varepsilon * \log [g*G^\varepsilon](v) - \nabla G^\varepsilon * \log [g*G^\varepsilon](w)).
\]
Additionally, for $f\in\mathscr{P}(\R^d)$, we define the measure-dependent velocity
\[
U^\varepsilon[g,f](v) := \int_{\R^d}K_g(v,w) \, \rd f(w), \quad U^\varepsilon[f] := U^\varepsilon[f,f].
\]
In this way, \eqref{eq:epslan} can be written as
\begin{equation}
	\label{eq:epscty}
	\partial_t f + \nabla \cdot (U^\varepsilon[f] f) = 0.
\end{equation}
Formally speaking, by approximating an initial data by a finite number of atomic measures, we expect the solution of \eqref{eq:epscty} to be approximated by a finite number of Dirac masses following the local velocity of particles. More precisely, suppose we are given initial data $f^0 \in \mathscr{P}(\R^d)$ for~\eqref{eq:epslan} and we can approximate $f^0$ by a sequence of empirical measures $\mu_0^N = \frac{1}{N} \sum_{i=1}^N \delta_{v_0^i}$, with equal weights for simplicity, where $v_0^i \in \R^d$ for $i=1,\dots, N$. We expect the solution to \eqref{eq:epscty} to be given by the empirical measure with equal weights
$$
\mu^N(t)= \frac1N \sum_{i = 1}^N \delta_{v^i(t)}\,,
$$
where $v^i(t)$ is the solution of the ODE system
$$
\dot{v}^i(t)=U[\mu^N(t)](v^i(t)).
$$ 
In fact, $\mu^N(t)$ is a distributional solution to \eqref{eq:epscty} with $\mu_0^N$ as initial data. The results in \cite{CDDW20} do not provide a well-posedness of measure solutions to \eqref{eq:epslan} with measure initial data, ensuring only the existence by compactness. Due to the lack of continuous dependence with respect to initial data to \eqref{eq:epscty} in the general probability measure setting, showing the convergence of the mean field limit is important from the numerical viewpoint \cite{CHWW20}. More specifically, we show that $\mu^N$ converges towards the unique weak solution $f$ of \eqref{eq:epscty} in the limit $N\to \infty$. 

In the simplified setting of equally weighted particles, the main result of this paper can be summarized as follows. Suppose that the initial data are well approximated in the sense of
\[
W_\infty(\mu_0^N, \, f^0) \to 0 \quad\mbox{as $N\to \infty$ fast enough,}
\]
where $W_\infty$ denotes the $\infty$-Wasserstein metric~\cite{RR98}, see Hypothesis~\ref{B:initmf} and \ref{B:initcond}. As mentioned above, the evolution of $\mu_0^N$ through~\eqref{eq:epslan} is characterised entirely by the evolution of the `particles' starting at $v_0^i$ according to the ODE system for $i=1,\dots, N$
\begin{align}\label{eq:particlesys}
	\dot{v}^i &= - \frac{1}{N}\sum_{j=1}^N |v^i - v^j|^{2+\gamma}\Pi[v^i - v^j](\nabla G^\varepsilon * \log [\mu^N * G^\varepsilon](v^i) - \nabla G^\varepsilon * \log [\mu^N * G^\varepsilon](v^j)).
\end{align}
We will prove, at least for short times depending on the value of $\gamma\in(-3,0]$, that $f = f(t) = f_t$ and $\mu^N = \mu^N(t) = \mu_t^N$ exist (c.f.~\Cref{thm:existmf} and~\Cref{lem:existparticle}, respectively) and solve~\eqref{eq:epslan} according to the initial conditions $f^0$ and $\mu_0^N$, respectively. Given the existence of such curves $f, \, \mu^N$ and the fact that $\mu_0^N \to f^0$ as $N\to \infty$, we seek to prove the mean field limit (c.f.~\Cref{thm:CCH})
\[
W_\infty(\mu^N(t), \, f(t)) \to 0, \quad \text{for }t\in (0,T_m) \text{ as }N\to \infty,
\]
where $T_m>0$ is the maximal existence time of $f$ (c.f.~\Cref{thm:existmf}). 

The mean-field limit has attracted lots of attention in the last years in different settings for aggregation-diffusion and Vlasov type kinetic equations. Different approaches have been taken leading to a very lively interaction between different communities of researchers in analysis and probability. We refer to \cite{BH,dobru,Neun,Spohn,G03} for the classical approaches in the field. Recent advances in non-Lipschitz settings and with applications to models with alignment have been done in \cite{BCC11,CS18,CCHS19,CS19}, for the aggregation-diffusion and Vlasov-type equations in  \cite{HJ07,CDFLS11,CCH14,J14,H14,HJ15,Due16,G16,JW16,LP17,JW17,PS17,JW18,BJW19,S20,BJS22}, and for incompressible fluid problems \cite{H09,HIpre}. 

We prove the mean field limit to the regularized Landau equation \eqref{eq:epslan} following the strategy and ideas from~\cite{H09,CCH14}. The main difference with these references is the fact that equation \eqref{eq:epslan} is more nonlocal, and it can be interpreted as a transport equation with a highly nonlocal nonlinear mobility depending quadratically on the density $f$. Let us finally mention that our result does not give quantitative bounds on the mean-field limit depending on $N$ and $\varepsilon$ compared to recent works \cite{JW16,JW17,JW18,BJW19}. This is certainly an important open question of great importance from the numerical viewpoint.
\subsection{Main results}
The proof of the mean field limit convergence of~\eqref{eq:epslan} for fixed $\varepsilon>0$ is achieved with the following strategy borrowed from \cite{CCH14}: we first show the existence and uniqueness of the continuity equation~\eqref{eq:epscty} for some maximal time horizon $T_m>0$ in~\Cref{sec:estvel,sec:existcts}, then we show that the particle system does also exist in~\Cref{sec:IPS}. We finally conclude by estimating the distance between the two systems in the $W_\infty$ metric when $N\to\infty$ as well as establishing a lower bound on the existence of the particle system~\Cref{sec:mfl}. Let us point out that since the kernel $A$ is singular or grows at infinity, these properties of the continuity equation and the associated particle system are not obvious.

Continuity equations of the form~\eqref{eq:epscty} have been extensively studied~\cite{G03,BLR11}. To obtain well-posedness, we show regularity and growth estimates on $U^\varepsilon[f]$, that stem from the following regularity assumptions on the initial data. 
\begin{enumerate}[label=\textbf{A\arabic*}]
	\item \label{A:cpctsupp} The initial condition $f^0$ belongs to $\mathscr{P}_c(\R^d)$, the space of compactly supported probability measures on $\R^d$.
	\item \label{A:gamma} For $\gamma\in (-3,-2)$, there exists $p>1$ such that $\frac{p}{p-1}(2+\gamma)>-d$ and $f^0$ belongs to $L^p(\R^d)$.
\end{enumerate}
\begin{theorem}[Existence of mean field limit]
	\label{thm:existmf}
	Fix $\varepsilon>0$, $\gamma\in(-3,0]$, and initial data $f^0\in \mathscr{P}(\R^d)$ satisfying~\ref{A:cpctsupp} and~\ref{A:gamma}.
	Then, there is a time horizon $T = T(\gamma,\varepsilon, f^0)>0$ such that there is a unique weak solution $f$ to~\eqref{eq:epslan} given in
	\[
	f \in \left\{
	\begin{array}{cl}
		C([0,T]; \mathscr{P}_c(\R^d)), 	&\gamma\in[-2,0] 	\\
		C([0,T]; \mathscr{P}_c(\R^d)) \cap L^\infty(0,T; L^p(\R^d)), 	&\gamma\in(-3,-2)
	\end{array}
	\right.,
	\]
	where $f|_{t=0} = f^0$, and the exponent $p>1$ is the same as in~\ref{A:gamma}. 
	
	In the case $\gamma\in[-2,0]$, the maximal time of existence $T_M=+\infty$ is infinite. While for the case $\gamma\in(-3,-2)$, either  the maximal time of existence is infinite $T_M=+\infty$, or the $L^p$ norm of the solution blows up
	\[
	\mathrm{esssup}_{s\in[0,t)}\|f(s)\|_{L^p}\uparrow + \infty \quad \text{as}\quad t \uparrow T_M.
	\]
\end{theorem}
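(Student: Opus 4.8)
The plan is to realize $f$ as the pushforward of the initial data along the flow of the velocity field $U^\varepsilon[f]$, and to close the loop by a fixed-point argument in a suitable complete metric space of curves of compactly supported probability measures. Concretely, I would fix a large radius $R_0$ with $\mathrm{supp}\,f^0\subset B_{R_0}$ and a small $T>0$, and consider the set $\mathcal{X}_T$ of curves $g\in C([0,T];\mathscr{P}_c(\R^d))$ (together with a uniform $L^p$ bound when $\gamma\in(-3,-2)$) with $\mathrm{supp}\,g_t\subset B_{R(t)}$ for a radius function $R(t)$ to be chosen, equipped with $\sup_{t\le T}W_1(g_t,\tilde g_t)$ (or $W_p$ in the singular range). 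For $g\in\mathcal{X}_T$ one freezes the logarithmic nonlinearity and considers the \emph{linear} vector field $b_t(v):=U^\varepsilon[g_t,g_t](v)=\int K_{g_t}(v,w)\,\rd g_t(w)$; the first task is to show $b$ is, locally in space and uniformly in $t\le T$, bounded and Lipschitz in $v$, so that its flow $X_t$ exists and is well-defined, and then set $\Gamma(g)_t:=(X_t)_\#f^0$. A standard Cauchy–Lipschitz / DiPerna–Lions argument then gives that $\Gamma(g)$ is the unique weak solution of the \emph{linear} continuity equation $\partial_t h+\nabla\cdot(b h)=0$ with datum $f^0$, and one checks $\Gamma$ maps $\mathcal{X}_T$ into itself by propagating the support bound (Grönwall on $|X_t|$ using the velocity bound) and, in the singular case, propagating the $L^p$ bound via the change-of-variables formula and a bound on $\|\nabla b\|_{L^\infty}$ controlling the Jacobian.

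The analytic heart is the estimates on $K_g$ and $U^\varepsilon[g,f]$. Here the regularization does the main work: since $\log[g*G^\varepsilon]$ is a smooth function with at most linear growth of its argument (as $G^\varepsilon$ has the prescribed exponential decay, c.f.\ the choice $G(v)=Ce^{-(1+|v|^2)^{1/2}}$ and \Cref{lem:difflog}), the factor $\nabla G^\varepsilon*\log[g*G^\varepsilon](v)-\nabla G^\varepsilon*\log[g*G^\varepsilon](w)$ is bounded and Lipschitz on compact sets with constants depending only on $\varepsilon$, $R(t)$, and (in the singular regime) $\|g_t\|_{L^p}$ through a lower bound on $g_t*G^\varepsilon$. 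The genuinely singular/growing factor is $|v-w|^{2+\gamma}\Pi[v-w]$: for $\gamma\in[-2,0]$ this is bounded on bounded sets and even Lipschitz (the exponent $2+\gamma\ge 0$), so $U^\varepsilon[g,f](v)$ is Lipschitz in $v$ directly; for $\gamma\in(-3,-2)$ the kernel has a nonintegrable-looking singularity $|v-w|^{2+\gamma}$ which one must integrate against $\rd f(w)$, and this is exactly where Hypothesis~\ref{A:gamma} enters — Hölder's inequality with the conjugate exponent $p/(p-1)$ together with $\frac{p}{p-1}(2+\gamma)>-d$ makes $\int|v-w|^{2+\gamma}\,\rd f(w)\lesssim\|f\|_{L^p}$, and a similar argument handles the $v$-difference quotient to get Lipschitz bounds on $U^\varepsilon[f]$ with constant depending on $\|f\|_{L^p}$.

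With these bounds, $\Gamma$ is shown to be a contraction on $\mathcal{X}_T$ for $T$ small depending on $\gamma,\varepsilon,f^0$: comparing two flows $X,\tilde X$ associated to data $g,\tilde g$, one estimates $\frac{\rd}{\rd t}|X_t(v)-\tilde X_t(v)|$ by $\|\nabla b\|_{L^\infty}|X_t-\tilde X_t|$ plus a term measuring $\|U^\varepsilon[g_t]-U^\varepsilon[\tilde g_t]\|_{L^\infty(B_{R(t)})}$, and the latter is controlled by $W_1(g_t,\tilde g_t)$ (resp.\ $W_p$) using the Lipschitz-in-measure dependence of $K$ and of the mollified logarithm — again a consequence of the regularization and \Cref{lem:difflog} — giving $W_1(\Gamma(g)_t,\Gamma(\tilde g)_t)\le C(T)\sup_{s\le t}W_1(g_s,\tilde g_s)$ with $C(T)\to0$ as $T\to0$. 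Banach fixed point yields a unique weak solution on $[0,T]$; uniqueness in the stated class follows from the same stability estimate (any two solutions with the same datum coincide). Finally, iterating in time gives a maximal existence time $T_M$: for $\gamma\in[-2,0]$ the velocity bound is quadratic in the support radius and linear-in-$t$ Grönwall never blows up in finite time, so one bootstraps to $T_M=+\infty$; for $\gamma\in(-3,-2)$ all constants degrade only through $\|f(s)\|_{L^p}$, so the solution extends as long as this norm stays finite, which is precisely the stated blow-up dichotomy. The main obstacle I anticipate is the singular-kernel case: ensuring the $L^p$ bound genuinely propagates along the flow (so that the a priori estimates used to build the flow remain valid) requires care, since the Jacobian of $X_t$ is controlled by $\|\nabla U^\varepsilon[f_t]\|_{L^\infty}$, which itself depends on $\|f_t\|_{L^p}$ — one must set up the iteration/Grönwall so that this feedback is closed on a short time interval rather than circular.
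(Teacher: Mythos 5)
Your overall strategy coincides with the paper's: freeze the measure $g$, solve the resulting linear continuity equation by characteristics, define the solution map $g\mapsto \Phi_g\#f^0$, and run a Banach fixed point in a Wasserstein metric on a set of curves of compactly supported measures on which the second moment (resp.\ the $L^p$ norm for $\gamma\in(-3,-2)$) is propagated; the $L^p$ blow-up dichotomy then follows as you say. The genuine gap is your argument that $T_M=+\infty$ for $\gamma\in[-2,0]$. You assert that ``the velocity bound is quadratic in the support radius and linear-in-$t$ Gr\"onwall never blows up in finite time,'' but a differential inequality of the form $\dot R\lesssim R^2$ \emph{does} blow up in finite time, and for $\gamma\in(-1,0]$ the crude bound of~\Cref{prop:boundsKg}, $|U^\varepsilon[g,f](v)|\lesssim_\varepsilon M_{2+\gamma}(f)\japangle{v}^{2+\gamma}$ with $2+\gamma>1$, is genuinely superlinear, so Gr\"onwall alone does not exclude finite-time escape of the characteristics. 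Two structural facts are needed and are absent from your sketch: first, the cancellation $\Pi[v-w]v=\Pi[v-w]w$ used in~\Cref{lem:growthPhi}, which converts the superlinear bound into $\frac{\rd}{\rd t}\japangle{\Phi_g}^2\lesssim_\varepsilon M_2(g)\japangle{\Phi_g}^2$, i.e.\ only exponential growth of the support; second, and crucially for the bootstrap, conservation of the second moment along the nonlinear solution (last part of~\Cref{cor:energyprop}, obtained by testing with $\japangle{v}^2$ and antisymmetrizing in $(v,w)$), which guarantees that the local existence time produced by the contraction --- it depends on $M_2(f^0)$ --- does not degenerate under iteration. Without a non-deteriorating quantity of this kind your bootstrap could stall at a finite time, so the global statement for $\gamma\in[-2,0]$ is not proved as written.

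A secondary caveat: you propose to contract in $\sup_t W_1$ (or $W_p$) and to control the measure-dependence of the mollified logarithm via~\Cref{lem:difflog}, but that lemma is intrinsically a $W_\infty$ estimate: its proof bounds the displacement $|\tau(w)-w|$ pointwise by $W_\infty$ against the ratio $|\nabla G^\varepsilon|\le \varepsilon^{-1}G^\varepsilon$. With $W_1$ one cannot pull the displacement out of the weighted average; one must instead invoke lower bounds on $g*G^\varepsilon$ on the relevant compact sets (as you hint), which yields constants of order $e^{CR/\varepsilon}$ and forces you to track the support radius through every stability estimate. This can be made to work for uniformly compactly supported curves, but it is simpler --- and it is what the paper does --- to run the entire fixed point in the $W_\infty$ metric, whose constants are uniform in the measure and which is also the metric in which the mean-field limit is subsequently established.
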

The notion of weak solution $f$ to~\eqref{eq:epscty} (equivalently~\eqref{eq:epslan}) means that, for any $\phi\in C_c^\infty(\R^d)$, the following equality holds
\[
\frac{\rd}{\rd t}\int_{\R^d} \phi(v) \, \rd f_t(v) = \int_{\R^d} \nabla \phi(v) \cdot U^\varepsilon[f(t)](v) \, \rd f_t(v).
\]
The particle solution \eqref{eq:particlesys} is in fact a solution to the previous equation when the initial condition is a convex combinations of delta measures~\eqref{eq:epscty}. More precisely, for every $N\in\mathbb{N}$, take initial points $\{v_0^{i,N}\}_{i=1}^N\subset \R^d$ and positive weights $\{m_{i,N}\}_{i=1}^N$ satisfying
\[
\sum_{i=1}^N m_{i,N} = 1, \quad m_{i,N} \ge 0, \quad \forall i=1,\dots,N.
\]
The $N$-particle ODE system we consider is
\begin{align}
	\label{eq:epsparticle}
	\begin{split}
		\frac{\rd}{\rd t}v^i(t) &= U^\varepsilon[\mu^N(t)](v^i) = \sum_{j=1}^Nm_j K_{\mu^N(t)}(v^i,\, v^j), \\
		\mu^N(t) &= \sum_{i=1}^N m_i \delta_{v^i(t)}, \\
		\left.v^i\right|_{t=0} &= v_0^i.
	\end{split}
\end{align}
\begin{lemma}[Existence of particle solutions]
	\label{lem:existparticle}
	For any $\varepsilon>0$, and $\gamma\in(-3,0]$, there exists a time horizon $T = T\left(\varepsilon, \gamma, \left\{v_0^{i,N} \right\}_{i=1}^N \right)>0$ and a curve  $v^{i,N} \in C^1([0,T];\R^d)$ which satisfies ~\eqref{eq:epsparticle}. For $\gamma\in[-2,0]$, the solution to \eqref{eq:epsparticle} is unique, and the time horizon $T$ can be arbitrarily large.
\end{lemma}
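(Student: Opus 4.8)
The plan is to reduce~\eqref{eq:epsparticle} to a closed finite-dimensional ODE system $\dot{\mathbf v}=F(\mathbf v)$ on $\R^{dN}$, where $\mathbf v=(v^1,\dots,v^N)$ and
\[
F^i(\mathbf v)=\sum_{j=1}^N m_j K_{\mu^N}(v^i,v^j)=-\sum_{j=1}^N m_j A(v^i-v^j)\bigl(h_{\mathbf v}(v^i)-h_{\mathbf v}(v^j)\bigr),\qquad h_{\mathbf v}:=\nabla G^\varepsilon*\log[\mu^N*G^\varepsilon],
\]
and then to invoke the Peano and Picard--Lindel\"of theorems. First I would record the properties of $h_{\mathbf v}$ that are supplied by \Cref{sec:estvel} and \Cref{lem:difflog}: since $\mu^N*G^\varepsilon=\sum_i m_i G^\varepsilon(\cdot-v^i)$ is smooth and strictly positive, and the choice $G(v)=Ce^{-(1+|v|^2)^{1/2}}$ guarantees $|\nabla^k G^\varepsilon|\le C_k(\varepsilon,d)\,G^\varepsilon$ pointwise, one gets $|\nabla^k\log[\mu^N*G^\varepsilon]|\le C_k(\varepsilon,d)$ uniformly in $\mathbf v$, hence $h_{\mathbf v}\in C^\infty$ with $\|\nabla^k h_{\mathbf v}\|_{L^\infty}\le C_k(\varepsilon,d)$ for $k\ge1$, \emph{uniformly in the configuration} $\mathbf v$; moreover $(\mathbf v,v)\mapsto h_{\mathbf v}(v)$ is jointly smooth, so on any ball the map $\mathbf v\mapsto h_{\mathbf v}$ is Lipschitz into $C^1$.

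For existence for every $\gamma\in(-3,0]$ I would use $\|A(z)\|_{\mathrm{op}}=|z|^{\gamma+2}$ together with the uniform Lipschitz bound on $h_{\mathbf v}$ to obtain $|K_{\mu^N}(v,w)|\le C(\varepsilon,d)\,|v-w|^{\gamma+3}$. Since $\gamma+3>0$, the right-hand side vanishes as $w\to v$, so setting $K_{\mu^N}(v,v):=0$ extends $(v,w,\mathbf v)\mapsto K_{\mu^N}(v,w)$ to a map that is continuous on all of $\R^{dN}$ (away from the diagonal this is clear from continuity of $A$ and $h_{\mathbf v}$). Thus $F$ is continuous and bounded on compact sets, and Peano's theorem yields a local solution $v^{i,N}\in C^1([0,T];\R^d)$ with the prescribed initial data. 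To make $T$ large I would extract an \emph{a priori} confinement: differentiating the energy and symmetrising in $i\leftrightarrow j$ (using $A(-z)=A(z)$),
\[
\frac{\rd}{\rd t}\sum_i m_i|v^i|^2=-\sum_{i,j}m_im_j\,(v^i-v^j)^{\!\top}A(v^i-v^j)\bigl(h_{\mathbf v}(v^i)-h_{\mathbf v}(v^j)\bigr)=0,
\]
because $(v^i-v^j)^{\!\top}A(v^i-v^j)=|v^i-v^j|^{\gamma+2}(v^i-v^j)^{\!\top}\Pi[v^i-v^j]=0$; the same computation shows $\sum_i m_i v^i$ is conserved. Hence every $v^i(t)$ stays in a fixed ball $\overline{B}(0,R)$ with $R=R(\{v_0^{i,N}\},\{m_{i,N}\})$, on which $|F|\le C(\varepsilon,d)(2R)^{\gamma+3}$, so no solution leaves a compact set in finite time and the existence time may be taken arbitrarily large for every $\gamma\in(-3,0]$.

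For uniqueness when $\gamma\in[-2,0]$ I would upgrade $F$ to locally Lipschitz. Splitting a summand of $F^i(\mathbf v)-F^i(\mathbf w)$ into the part obtained by replacing $h_{\mathbf v}$ with $h_{\mathbf w}$ — bounded on $\overline{B}(0,R)^N$ by $C_R\,|v^i-v^j|^{\gamma+3}\|h_{\mathbf v}-h_{\mathbf w}\|_{C^1(\overline{B}(0,R))}\le C_{R,\varepsilon}\|\mathbf v-\mathbf w\|$, which is harmless for all $\gamma>-3$ — and the part with $\eta:=h_{\mathbf w}$ frozen, one is left to show $(a,b)\mapsto\Psi(a,b):=A(a-b)(\eta(a)-\eta(b))$ is locally Lipschitz. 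Writing $\eta(a)-\eta(b)=M(a,b)(a-b)$ with $M(a,b)=\int_0^1\nabla\eta(b+s(a-b))\,\rd s$ Lipschitz (as $\nabla^2\eta\in L^\infty$), and using $\Pi[z]z=0$ with $z=a-b$, gives
\[
\Psi(a,b)=|z|^{\gamma+2}M(a,b)z-|z|^{\gamma}\,z\,\bigl(z^{\!\top}M(a,b)z\bigr).
\]
The $z$-gradients of the scalar/vector building blocks $|z|^{\gamma+2}z_k$ and $|z|^{\gamma}z_kz_lz_m$ are both $O(|z|^{\gamma+2})$, hence bounded \emph{precisely} when $\gamma\ge-2$; together with the Lipschitz regularity of $M$ this makes $\Psi$, and therefore $F$, locally Lipschitz. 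Picard--Lindel\"of then gives local uniqueness, and combined with the confinement above the unique maximal solution is global.

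\textbf{Main obstacle.} The crux throughout is the diagonal behaviour of $A(z)=|z|^{\gamma+2}\Pi[z]$: it is singular for $\gamma\in(-3,-2)$ and merely H\"older for $\gamma\in(-2,-1)$, so $F$ cannot be handled by treating $A$ in isolation. The remedy is to use only the combination $A(v^i-v^j)(h_{\mathbf v}(v^i)-h_{\mathbf v}(v^j))$, exploiting $A(z)z=0$ together with the fact that $h_{\mathbf v}$ is Lipschitz with a constant \emph{independent of $\mathbf v$} (the reason for the particular mollifier $G$); this promotes the effective size from $|z|^{\gamma+2}$ to $|z|^{\gamma+3}$, which is continuous for $\gamma>-3$ and Lipschitz exactly for $\gamma\ge-2$, explaining the dichotomy in the statement. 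The one ingredient I would quote rather than redo — from \Cref{sec:estvel} and \Cref{lem:difflog} — is the joint smooth, hence locally Lipschitz, dependence of $h_{\mathbf v}$ on the configuration $\mathbf v$, uniformly on compact sets of positions.
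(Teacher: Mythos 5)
Your proposal is correct, but it follows a genuinely different route from the paper. The paper does not treat~\eqref{eq:epsparticle} as a bare ODE on $\R^{dN}$: for $\gamma\in[-2,0]$ it simply applies the continuum well-posedness result (\Cref{thm:existmf} together with \Cref{prop:Pwellposed}) to the empirical initial datum $\mu_0^N$, observes that the push-forward of an empirical measure along the flow map $\Phi_{\mu^N}$ is again empirical, and reads off that the atoms solve~\eqref{eq:epsparticle}; uniqueness and arbitrarily large $T$ are inherited from the theorem. For $\gamma\in(-3,-2)$ the paper runs a Schauder fixed-point (Peano-type) argument on the path space $C([0,T];\R^d)$, using the uniform bound $|U^\varepsilon|\lesssim_\varepsilon 1$ from \Cref{prop:boundsKg}, the H\"older continuity in $v$ from \Cref{lem:UHold}, and the measure-wise stability from \Cref{lem:gfvel}, obtaining existence only. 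Your argument instead works directly with the finite-dimensional field $F$: the bound $|K_{\mu^N}(v,w)|\lesssim_\varepsilon|v-w|^{3+\gamma}$ (the same first estimate as in \Cref{prop:boundsKg}) extends $F$ continuously across the diagonal and gives Peano existence for all $\gamma\in(-3,0]$, while your explicit local-Lipschitz computation for $\gamma\in[-2,0]$ — exploiting $\Pi[z]z=0$, the uniform $C^2$ bounds on $h_{\mathbf v}$ from \Cref{lem:estlogdiff}, and the Lipschitz dependence of $h_{\mathbf v}$ on the configuration — yields uniqueness by Picard--Lindel\"of without passing through the PDE contraction argument. Your particle-level conservation of $\sum_i m_i|v^i|^2$ (the discrete analogue of the last statement of \Cref{cor:energyprop}) is a nice addition: it confines the trajectories and gives global-in-time existence even for $\gamma\in(-3,-2)$, which is stronger than the lemma; be aware, though, that what actually limits $T^N$ in the paper's very soft regime is not trajectory existence but the positivity of the minimal inter-particle distance $\eta_m^N$, which your argument does not (and need not) address. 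Two small points of bookkeeping: the Lipschitz dependence of $\mathbf v\mapsto h_{\mathbf v}$ into $C^1$ that you quote is most cleanly obtained from the argument of \Cref{lem:difflog}/\Cref{lem:diffKg} applied to empirical measures, using that the map $v^i\mapsto w^i$ is a transport plan so $W_\infty(\mu^N_{\mathbf v},\mu^N_{\mathbf w})\le\max_i|v^i-w^i|$; and the uniform derivative bounds you invoke are only needed (and only stated in the paper) for $k\le 2$, which your proof indeed respects. In exchange for being longer, your approach is self-contained; the paper's approach buys economy and the fact, used later in \Cref{sec:mfl}, that $\mu^N$ is literally the unique weak solution of~\eqref{eq:epscty} emanating from $\mu_0^N$.
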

The well-posedness of the inter-particle system~\eqref{eq:epsparticle} is proven in~\Cref{sec:IPS}. The case $\gamma\in[-2,0]$ is an application of~\Cref{thm:existmf}, while for $\gamma\in(-3,-2)$ a standard Peano existence argument is used. 

For $N\in \mathbb{N}$ and trajectories $\{v^{i,N}(t)\}_{i=1}^N$ such as those constructed in~\Cref{lem:existparticle}, we define the minimum inter-particle distance for times $t$ in the domain of existence
\[
\eta_m^N(t) := \min_{i\neq j} |v^{i,N}(t) - v^{j,N}(t)|.
\]
Taking the continuum and particle solutions $f$ and $\mu^N$ from \Cref{thm:existmf} and \Cref{lem:existparticle} respectively, we define
\[
\eta^N(t) := W_\infty(\mu^N(t), f(t)).
\]
The following assumptions are well-preparedness conditions on the initial data of the particle solution, see \cite{CCH14}.

\begin{enumerate}[label=\textbf{B\arabic*}]
	\item \label{B:initmf} The initial particles $\{v_0^{i,N}\}_{i=1}^N \subset \R^d$ and weights $\{m_{i,N}\}_{i=1}^N\subset (0,1)$ satisfy $W_\infty(\mu_0^N, f^0)\to 0$ as $N\to \infty$.
	\item \label{B:initcond} For $\gamma\in(-3,-2)$, the initial particles moreover satisfy
	\begin{equation}
		\label{eq:initcond}
		\lim_{N\to \infty} \eta^N(0)^\frac{d}{p'}\eta_m^N(0)^{1+\gamma} = 0,
	\end{equation}
	where the conjugate exponent $p'$ satisfies $\frac{1}{p}+\frac{1}{p'}=1$ with $p$ from~\ref{A:gamma}.
\end{enumerate}
The main result concerning the mean field limit can now be stated as:

\begin{theorem}
	\label{thm:CCH}
	Fix $\varepsilon>0$, $\gamma\in(-3,0]$, and initial data $f^0$ satisfying~\ref{A:cpctsupp} and~\ref{A:gamma}. We consider $f$ the solution to~\eqref{eq:epslan} on the maximal time interval $[0,T_m]$ provided by~\Cref{thm:existmf}. Given initial particle configurations $\{\mu_0^N\}_{n\in\mathbb{N}}$ satisfying~\ref{B:initmf} and~\ref{B:initcond}, we consider $\{\mu^N\}_{N\in\mathbb{N}}$ particle solutions of~\eqref{eq:epslan}  with maximal time of existence $T^N>0$ provided by Lemma~\ref{lem:existparticle}. Then $\liminf_{N\to\infty}T^N \ge T_m$, and the mean field limit holds
	\begin{equation}
		\label{eq:mfl}
		\lim_{N\to\infty}\sup_{t\in[0,T]}W_\infty(\mu^N(t), f(t))\to 0, \quad \forall T\in[0,T_m).
	\end{equation}
\end{theorem}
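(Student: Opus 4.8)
\emph{Strategy.} The plan is to follow the Lagrangian stability scheme of \cite{CCH14,H09}: represent both the continuum solution $f(t)$ and the empirical solution $\mu^N(t)$ as push‑forwards of their initial data along the flows of the (time–dependent) velocity fields $U^\varepsilon[f(t)]$ and $U^\varepsilon[\mu^N(t)]$, and then estimate the $W_\infty$ distance between the two flows by a Gronwall/continuation argument on the pair $\bigl(\eta^N(t),\eta_m^N(t)\bigr)$. The one genuinely new feature compared with \cite{CCH14}, where the kernel is fixed, is that $K_g$ depends \emph{nonlinearly} on the measure $g$ through $\nabla G^\varepsilon*\log[g*G^\varepsilon]$; by the choice of the mollifier $G$ this dependence is Lipschitz from $(\mathscr P_c(\R^d),W_1)$ into $C^\infty$, uniformly on a fixed ball, which is exactly \Cref{lem:difflog}, with a constant that depends on $\varepsilon$ and on a uniform positive lower bound for $g*G^\varepsilon$ on the relevant (uniformly bounded) supports. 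This is also the reason the estimate is only qualitative in $N$ and not uniform in $\varepsilon$.

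\emph{Flow maps and reduction to a scalar inequality.} Fix $T<T_m$. Using the velocity estimates of \Cref{sec:estvel} together with \Cref{thm:existmf} and \Cref{lem:existparticle}, on a time interval on which the particles stay separated the fields $U^\varepsilon[f(t)]$ and $U^\varepsilon[\mu^N(t)]$ are, uniformly in $t\in[0,T]$, Lipschitz in the space variable on a fixed ball containing all supports, with a constant $L(t)$ that is uniformly bounded when $\gamma\in[-2,0]$ and, when $\gamma\in(-3,-2)$, controlled by $C(\varepsilon)\bigl(1+\|f(t)\|_{L^p}\bigr)$ on the $f$–side and by a negative power of $\eta_m^N(t)$ on the particle side. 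Hence $f(t)=\Psi_{0,t}\#f^0$ and $\mu^N(t)=\Phi^N_{0,t}\#\mu_0^N$ for the associated flow maps, and these are the unique weak solutions. Taking an optimal plan $\pi_0$ for $W_\infty(\mu_0^N,f^0)$ and pushing it forward by $(\Phi^N_{0,t},\Psi_{0,t})$ gives a coupling of $\mu^N(t)$ and $f(t)$, so that $\eta^N(t)\le \operatorname*{esssup}_{(x,y)\sim\pi_0}\bigl|\Phi^N_{0,t}(x)-\Psi_{0,t}(y)\bigr|$. Differentiating in time, inserting the intermediate term $U^\varepsilon[\mu^N(t)](\Psi_{0,t}(y))$, and taking the essential supremum over $\pi_0$ yields
\[
\frac{\rd^+}{\rd t}\,\eta^N(t)\ \le\ L(t)\,\eta^N(t)\ +\ \bigl\|U^\varepsilon[\mu^N(t)]-U^\varepsilon[f(t)]\bigr\|_{L^\infty(\operatorname{supp}f(t))}.
\]

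\emph{The stability term.} I would split $U^\varepsilon[\mu^N]-U^\varepsilon[f]$ as (i) the contribution of $K_{\mu^N}-K_f$, which by \Cref{lem:difflog} is bounded by $C(\varepsilon)\,\eta^N(t)\int_{\R^d}|v-w|^{2+\gamma}\,\rd\mu^N(t)(w)$, the integral being uniformly bounded for $\gamma\ge-2$ and, for $\gamma\in(-3,-2)$, controlled by combining the $\eta_m^N$–separation with the $L^p$ bound on $f(t)$; and (ii) the contribution of replacing $\mu^N(t)$ by $f(t)$ in the integration, $\int K_f(v,w)\,\rd(\mu^N(t)-f(t))(w)$. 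Here the projector structure makes $w\mapsto K_f(v,w)$ vanish like $|v-w|^{3+\gamma}$ on the diagonal; for $\gamma\ge-1$ this is Lipschitz and (ii) is $\le C\,\eta^N(t)$, for $\gamma\in[-2,-1)$ it is Hölder and (ii) is $\le C\,\eta^N(t)^{3+\gamma}$ via the $W_\infty$–optimal coupling between $\mu^N(t)$ and $f(t)$, and for $\gamma\in(-3,-2)$ one splits the integral into a ball $B(v,\rho)$, with $\rho$ comparable to $\eta^N(t)$ — estimating $\int_{B(v,\rho)}|v-w|^{2+\gamma}\,\rd f\lesssim \|f\|_{L^p}\rho^{\,d/p'+2+\gamma}$ by Hölder with \ref{A:gamma}, and bounding the mass and the number of particles of $\mu^N(t)$ in $B(v,\rho)$ by the $\eta_m^N$–separation together with $W_\infty$–closeness to $f$ — and its complement, where the kernel is Lipschitz. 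This produces, for $\gamma\in(-3,-2)$, a forcing of the schematic form $C\bigl(\varepsilon,\sup_{[0,T]}\|f(s)\|_{L^p}\bigr)\,\eta^N(t)^{\,d/p'}\,\eta_m^N(t)^{\,1+\gamma}$ plus harmless terms; in parallel, differentiating $|v^i-v^j|$ along \eqref{eq:epsparticle} gives the lower bound $\eta_m^N(t)\ge\eta_m^N(0)\exp\!\bigl(-\int_0^t L(s)\,\rd s\bigr)$, so that on any fixed $[0,T]$ the spacing $\eta_m^N(t)$ stays comparable to $\eta_m^N(0)$ as long as $L$ is controlled.

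\emph{Closing the argument.} For $\gamma\in[-2,0]$ all constants above are uniform and $L$ is bounded, so the displayed inequality together with \ref{B:initmf} gives $\sup_{[0,T]}\eta^N(t)\le e^{L T}\bigl(\eta^N(0)+o(1)\bigr)\to 0$ on every $[0,T]$, and since there is no spacing constraint $T^N$ may be taken arbitrarily large; this is the easy case. For $\gamma\in(-3,-2)$ I would run a continuation argument: let $\tau^N$ be the largest time $\le T$ on which $\eta^N\le\epsilon_0$ and $\eta_m^N\ge \tfrac12 c_T\,\eta_m^N(0)$, where $\epsilon_0$ is fixed small and $c_T=e^{-L_\ast T}$ with $L_\ast$ the corresponding bound on $L$; on $[0,\tau^N]$ the constants — in particular $\sup_{[0,T]}\|f(s)\|_{L^p}<\infty$ by \Cref{thm:existmf} since $T<T_m$ — are under control, and the differential inequalities propagate the well‑preparedness: using \ref{B:initcond} one shows $\eta^N(t)^{d/p'}\eta_m^N(t)^{1+\gamma}$ stays $o(1)$ and $\eta^N(t)\to 0$ uniformly, so that $(\eta^N,\eta_m^N)$ remains strictly inside the good region for $N$ large, forcing $\tau^N=T$; in particular the particle system cannot lose separation on $[0,T]$, whence $\liminf_{N\to\infty}T^N\ge T_m$ and \eqref{eq:mfl}. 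The main obstacle is precisely this last balance for $\gamma\in(-3,-2)$: organizing the ball‑splitting in step (ii) (and the companion estimate in step (i)) so that the resulting forcing is summable in the Gronwall argument and the bootstrap on $(\eta^N,\eta_m^N)$ closes with exactly the exponents of \ref{B:initcond} — this is where the singularity of $A$, the quadratic‑in‑$f$ mobility, and the $\varepsilon$–dependent smoothing of \Cref{lem:difflog} all interact, and it is the reason no quantitative rate in $N$ is obtained.
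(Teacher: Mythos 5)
Your overall architecture (flow-map representation, $W_\infty$ coupling of the two flows, a coupled system for $\eta^N$ and $\eta_m^N$, and a bootstrap driven by \ref{B:initcond}) is the same as the paper's, and for $\gamma\in[-2,0]$ your argument is essentially the paper's proof and is fine. But for $\gamma\in(-3,-2)$ there is a genuine gap, and it sits exactly where you admit "the main obstacle" lies. First, your reduction to $\frac{\rd^+}{\rd t}\eta^N\le L(t)\eta^N+\|U^\varepsilon[\mu^N]-U^\varepsilon[f]\|_{L^\infty}$ with $L(t)$ a spatial Lipschitz constant of the fields, "controlled by a negative power of $\eta_m^N$ on the particle side", is not available: for an atomic measure the self-interaction term has no cancellation partner, so near each particle $U^\varepsilon[\mu^N]$ is only H\"older of exponent $3+\gamma<1$ (\Cref{lem:UHold}, \Cref{prop:boundsKg}) no matter how large $\eta_m^N$ is, and a H\"older--Gr\"onwall inequality $\dot\eta\lesssim\eta^{3+\gamma}$ does not propagate smallness of $\eta^N(0)$. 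For the same reason your claimed a priori bound $\eta_m^N(t)\ge\eta_m^N(0)\exp(-\int_0^tL)$ is not justified; in the paper the decay of $\eta_m^N$ is itself coupled to $(\eta^N)^{d/p'}(\eta_m^N)^{1+\gamma}$ (Step 2, \Cref{sec:step2}), and the two inequalities must be closed simultaneously in the ratio variables, not sequentially with $\eta_m^N$ controlled first.

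Second, the key quantitative estimate that makes the bootstrap close is only sketched in your step (ii), and as written your forcing term $C\,(\eta^N)^{d/p'}(\eta_m^N)^{1+\gamma}$ is missing the extra factor of $\eta^N$ that the paper obtains. The paper gets $\frac{\rd}{\rd t}\eta^N\lesssim\eta^N(1+\|f\|_{L^p})\bigl(1+(\eta^N)^{d/p'}(\eta_m^N)^{1+\gamma}\bigr)$ by never invoking a Lipschitz constant for the particle field: both arguments of $K_{\mu^N}$ are transported by the optimal map $\tau^0$, the integral is split at scale $4\eta^N$, on the far region \Cref{lem:contourestimate} is applied after showing $|\tau^0(v)-\tau^0(w)|\ge|v-w|/2$, on the near region one uses that $K_{\mu^N}(\tau^0(v),\tau^0(w))$ vanishes when $\tau^0(v)=\tau^0(w)$ and is otherwise evaluated at separation $\ge\eta_m^N$, and then the midpoint argument ($\tau^0$ applied to $\tfrac{v^i+v^j}{2}$) yields $\eta_m^N\le2\eta^N$, which is what converts $(\eta_m^N)^{2+\gamma}(\eta^N)^{d/p'}$ into the linear-in-$\eta^N$ form. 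Without this structure (and the matching Step 2 estimate), the continuation argument in your last paragraph cannot be run with the exponents of \ref{B:initcond}, so the conclusions $\liminf_N T^N\ge T_m$ and \eqref{eq:mfl} are not yet established for $\gamma\in(-3,-2)$. A minor point: \Cref{lem:difflog} is a $W_\infty$ estimate that needs no lower bound on $g*G^\varepsilon$ (it uses $|\nabla G^\varepsilon|\le\varepsilon^{-1}G^\varepsilon$ pointwise); your $W_1$/lower-bound variant would work on uniformly bounded supports but is not what is needed, and constants would then depend on the support radius.
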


\section{Estimates on the velocity}
\label{sec:estvel}
This section collects the necessary estimates on the measure-dependent kernel and velocity, $K$ and $U^\varepsilon$. To fix notation, we define the Lebesgue bracket
\[
\japangle{v}^2 = 1+|v|^2, \quad v\in\R^d,
\]
and the mollifying sequence by
\[
G(v) = Ce^{-\japangle{v}}, \quad \int_{\R^d}G(v) \rd v = 1, \quad G^\varepsilon(v) = \frac{1}{\varepsilon^d}G\left(v/\varepsilon\right).
\]
Moreover, we define the $p$th order moment of a measure $f$ by
\[
M_p(f) = \int_{\R^d} \japangle{v}^p \rd f(v).
\]
We will use the notation $a\le_{\alpha,\beta,\dots} b$ to represent the statement that there is a constant $C=C(\alpha,\beta,\dots)$ such that $a\le C b$.
\begin{proposition}
	\label{prop:LipKg}
	Fix $\varepsilon>0$ and $\gamma=-2$. Then, for every $f,\, g\in \mathscr{P}(\R^d)$, the functions $K_g(v,w)$ and $U^\varepsilon[g,f](v)$ are $C^1$, skew-symmetric and satify the estimates
	\[
	|K_g(v,w)| \le \frac{2}{\varepsilon}, \quad |\nabla_v K_g(v,w)| \le \frac{28}{\varepsilon^2},
	\]
	\[
	|U^\varepsilon[g,f](v)| \le \frac{2}{\varepsilon}, \quad |\nabla U^\varepsilon[g,f](v)| \le \frac{28}{\varepsilon^2}.
	\]
\end{proposition}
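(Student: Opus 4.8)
The plan is to unwind the definition of $K_g$ and $U^\varepsilon[g,f]$ in the special case $\gamma = -2$, where the singular factor $|v-w|^{2+\gamma}$ becomes $|v-w|^0 = 1$, so that the kernel simplifies to
\[
K_g(v,w) = -\Pi[v-w]\bigl(\nabla G^\varepsilon * \log[g*G^\varepsilon](v) - \nabla G^\varepsilon * \log[g*G^\varepsilon](w)\bigr).
\]
Skew-symmetry is immediate since $\Pi[v-w] = \Pi[w-v]$ while the bracketed difference changes sign under $v\leftrightarrow w$; skew-symmetry of $U^\varepsilon[g,f]$ is not literally a statement (it is a function of one variable), so I read that clause as referring to $K_g$ only, or to the antisymmetry of $K_g$ inside the integral defining $U^\varepsilon$. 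The core of the proof is then a pointwise bound on the vector field $b_g(v) := \nabla G^\varepsilon * \log[g*G^\varepsilon](v)$ and its gradient, together with the trivial operator-norm bound $\|\Pi[z]\|\le 1$ (as an orthogonal projection) and $\|\nabla_v \Pi[v-w]\| \lesssim 1/|v-w|$, which must be handled carefully near the diagonal.

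The key estimates are: first, $\nabla G^\varepsilon(z) = \varepsilon^{-1}(\nabla G)^\varepsilon(z)$ in the sense that $|\nabla G^\varepsilon(z)| = \varepsilon^{-d-1}|\nabla G(z/\varepsilon)|$, and since $G(v) = Ce^{-\langle v\rangle}$ one computes $\nabla G(v) = -G(v)\, v/\langle v\rangle$, so $|\nabla G(v)| \le G(v)$ pointwise, giving $\int |\nabla G^\varepsilon| \le \varepsilon^{-1}\int |\nabla G| \le \varepsilon^{-1}$. Second, and crucially, the ratio estimate $|\nabla G^\varepsilon| \le \varepsilon^{-1} G^\varepsilon$ pointwise, which converts convolution against $\log[g*G^\varepsilon]$ into a weighted average: by Jensen-type manipulation,
\[
b_g(v) = \int \nabla G^\varepsilon(v-y)\,\log[g*G^\varepsilon](y)\,\rd y,
\]
and since $\nabla G^\varepsilon(v-y) = -\frac{v-y}{\varepsilon^2\langle (v-y)/\varepsilon\rangle} G^\varepsilon(v-y)$ with $|v-y|/(\varepsilon^2\langle(v-y)/\varepsilon\rangle) \le \varepsilon^{-1}$, one gets $|b_g(v)| \le \varepsilon^{-1}\int G^\varepsilon(v-y)|\log[g*G^\varepsilon](y)|\,\rd y$. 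Controlling $\log[g*G^\varepsilon]$ is exactly where the specific choice $G = Ce^{-\langle\cdot\rangle}$ matters (this is presumably the content of the referenced $\mathrm{Lem.~\ref{lem:difflog}}$): the lower bound $g*G^\varepsilon(y) \ge$ (something like) $c_\varepsilon e^{-\langle y\rangle/\varepsilon}$ and the upper bound $g*G^\varepsilon(y) \le \|G^\varepsilon\|_\infty = C\varepsilon^{-d}$ give two-sided control of $\log[g*G^\varepsilon]$ by an affine function of $\langle y\rangle$, which is then integrable against the Gaussian-like weight $G^\varepsilon(v-y)$ uniformly in $v$. Tracking the constants through this should produce the claimed $2/\varepsilon$; the gradient bound proceeds analogously by differentiating once more, where $\nabla_v b_g(v)$ picks up a second factor of $\varepsilon^{-1}$ from differentiating $G^\varepsilon$, plus the contribution of $\nabla_v\Pi[v-w]$, yielding the $28/\varepsilon^2$.

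The main obstacle I anticipate is making the $\nabla_v \Pi[v-w]$ term harmless: $\nabla_v \Pi[v-w]$ is of order $|v-w|^{-1}$, so naively $\nabla_v K_g(v,w)$ looks singular on the diagonal, yet the proposition asserts a uniform bound. The resolution must be that the bracketed difference $b_g(v) - b_g(w)$ is itself $O(|v-w|)$ — which follows from the Lipschitz bound $|\nabla b_g|\le \varepsilon^{-2}\cdot(\text{const})$ obtained in the previous step — so the product $(\nabla_v\Pi[v-w])(b_g(v)-b_g(w))$ stays bounded by $\|\nabla b_g\|_\infty$ times a dimensional constant. I would therefore organize the proof so that the Lipschitz estimate on $b_g$ is established \emph{first}, then fed into the diagonal analysis of $\nabla_v K_g$. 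The bound on $U^\varepsilon[g,f]$ and its gradient then follows immediately by integrating the pointwise bounds on $K_g$, $\nabla_v K_g$ against the probability measure $f$, since $\int \rd f = 1$ and differentiation under the integral sign is justified by the $C^1$ bounds. I expect the stated constants $2$ and $28$ to emerge from explicit but unilluminating bookkeeping of the elementary inequalities $|\nabla G|\le G$, $\langle v\rangle^{-1}|v|\le 1$, and the integral $\int \langle v\rangle G(v)\,\rd v$, which I would relegate to the body of the proof rather than the sketch.
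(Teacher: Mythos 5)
Your overall architecture (kill the $|v-w|^{2+\gamma}$ factor at $\gamma=-2$, bound $b_g=\nabla G^\varepsilon*\log[g*G^\varepsilon]$ and $\nabla b_g$ uniformly, use $\|\Pi\|\le 1$, and neutralize the $O(|v-w|^{-1})$ singularity of $\nabla_v\Pi[v-w]$ by the Lipschitz bound $|b_g(v)-b_g(w)|\lesssim \varepsilon^{-2}|v-w|$, then integrate against $f$) is the right one and matches the paper. But the step where you actually produce the uniform bounds on $b_g$ and $\nabla b_g$ has a genuine gap. You propose $|b_g(v)|\le \varepsilon^{-1}\int G^\varepsilon(v-y)\,|\log[g*G^\varepsilon](y)|\,\rd y$ and then control $|\log[g*G^\varepsilon]|$ by a two-sided bound on $g*G^\varepsilon$, with lower bound of the form $c_\varepsilon e^{-\langle y\rangle/\varepsilon}$. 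That lower bound is false for general $g\in\mathscr{P}(\R^d)$: take $g=\delta_{w_0}$ with $|w_0|$ large, so that $g*G^\varepsilon(0)\approx C\varepsilon^{-d}e^{-|w_0|/\varepsilon}$ is arbitrarily small while $\langle 0\rangle=1$. Any correct version of such a bound must depend on where the mass of $g$ sits (e.g.\ through a radius $R$ with $g(B_R)\ge 1/2$), and the resulting estimate on $b_g$ would then depend on $g$ — incompatible with the proposition, whose constants $2/\varepsilon$ and $28/\varepsilon^2$ are uniform over \emph{all} $f,g\in\mathscr{P}(\R^d)$ (no compact support or moment hypotheses). Even ignoring uniformity, putting the absolute value on the logarithm injects spurious factors like $d\log(1/\varepsilon)$ from the upper bound $g*G^\varepsilon\le C\varepsilon^{-d}$, so the clean constants could not emerge this way.

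The missing idea — which is exactly the content of the paper's Lemma~\ref{lem:estlogdiff} (quoted from \cite[Lemma 30]{CDDW20}), not of Lemma~\ref{lem:difflog} as you guessed — is to never estimate $\log[g*G^\varepsilon]$ itself, but to commute the convolution, $\nabla G^\varepsilon*\log[g*G^\varepsilon]=G^\varepsilon*\nabla\log[g*G^\varepsilon]$, and exploit the quotient structure $\nabla\log(g*G^\varepsilon)=\frac{(\nabla G^\varepsilon)*g}{G^\varepsilon*g}$: since $|\nabla G^\varepsilon|\le \varepsilon^{-1}G^\varepsilon$ pointwise (this is where the choice $G=Ce^{-\langle\cdot\rangle}$ matters), the numerator is dominated by $\varepsilon^{-1}$ times the denominator, giving $\|\nabla\log(g*G^\varepsilon)\|_{L^\infty}\le 1/\varepsilon$ uniformly in $g$, and similarly $|\partial^{ij}\log(g*G^\varepsilon)|\le 4/\varepsilon^2$. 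Young's inequality with $\|G^\varepsilon\|_{L^1}=1$ then gives $\|b_g\|_{L^\infty}\le 1/\varepsilon$ and $\|\nabla b_g\|_{L^\infty}\le 4/\varepsilon^2$, after which your remaining steps (skew-symmetry, the diagonal cancellation $\nabla_v\Pi[v-w]\,(b_g(v)-b_g(w))$ controlled by $\|\nabla b_g\|_{L^\infty}$, and $\int \rd f=1$) do yield the proposition, as in the paper's derivation via Proposition~\ref{prop:boundsKg} and Lemma~\ref{lem:contourestimate}.
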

\Cref{prop:LipKg} highlights the $C^1$-boundedness of the velocity field $U^\varepsilon$ in the special case $\gamma=-2$. For this value of $\gamma$, the well-posedness of~\eqref{eq:epscty} follows by standard techniques~\cite{G03}. \Cref{prop:LipKg} follows from the more general results~\Cref{prop:boundsKg}, \Cref{lem:contourestimate}, and~\Cref{lem:UHold} where $\gamma\in[-3,0]$. There, we shall see the precise dependence on $\gamma$. First, we recall a standard inequality for the Lebesgue bracket. 
\begin{lemma}[Peetre]
	\label{lem:peetre}
	For any $p\in\mathbb{R}$ and $x,y\in\Rd$, we have
	\[
	\frac{\langle x\rangle^p}{\langle y\rangle^p} \leq 2^{|p|/2} \langle x-y\rangle^{|p|}.
	\]
\end{lemma}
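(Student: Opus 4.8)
The plan is to reduce everything to the single multiplicative estimate
\[
\langle x \rangle \le \sqrt{2}\, \langle x-y \rangle \langle y \rangle, \qquad x,y\in\Rd,
\]
and then to recover the general exponent by monotonicity of $t\mapsto t^p$ together with a symmetry argument. To establish the displayed bound, I would start from the elementary inequality $|x|^2 = |(x-y)+y|^2 \le 2|x-y|^2 + 2|y|^2$, giving
\[
\langle x \rangle^2 = 1 + |x|^2 \le 1 + 2|x-y|^2 + 2|y|^2 .
\]
Writing $a = |x-y|^2 \ge 0$ and $b = |y|^2 \ge 0$, one has $1 + 2a + 2b \le 2 + 2a + 2b + 2ab = 2(1+a)(1+b)$, so that $\langle x \rangle^2 \le 2\,\langle x-y\rangle^2 \langle y \rangle^2$, which is the claim after taking square roots.

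For $p \ge 0$ I would then divide by $\langle y \rangle$ and raise to the power $p$:
\[
\frac{\langle x \rangle^p}{\langle y \rangle^p} \le \bigl(\sqrt{2}\,\langle x-y\rangle\bigr)^p = 2^{p/2}\langle x-y\rangle^p = 2^{|p|/2}\langle x-y\rangle^{|p|}.
\]
For $p < 0$ I would set $q = -p > 0$ and swap the roles of $x$ and $y$ in the previous case, using that the Lebesgue bracket depends only on the Euclidean norm, so $\langle y-x\rangle = \langle x-y\rangle$; this yields
\[
\frac{\langle x \rangle^p}{\langle y \rangle^p} = \frac{\langle y \rangle^q}{\langle x \rangle^q} \le 2^{q/2}\langle y-x\rangle^q = 2^{|p|/2}\langle x-y\rangle^{|p|}.
\]
The case $p=0$ is immediate since both sides equal $1$.

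I do not expect any real obstacle here: the argument is three short steps, and the only point requiring minor care is choosing the elementary manipulation in the base case so that the constant that emerges is exactly $2^{1/2}$ (and hence $2^{|p|/2}$ after exponentiation) rather than something larger — the inequality $1 + 2a + 2b \le 2(1+a)(1+b)$ handles this cleanly. I would present the proof in precisely the base-case / positive-exponent / negative-exponent order above.
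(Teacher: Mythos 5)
Your proof is correct: the base inequality $\langle x\rangle^2 \le 2\langle x-y\rangle^2\langle y\rangle^2$ follows exactly as you argue from $1+2a+2b\le 2(1+a)(1+b)$, and the reduction of general $p$ to the cases $p\ge 0$ and $p<0$ (by swapping $x$ and $y$ and using $\langle y-x\rangle=\langle x-y\rangle$) is sound, yielding precisely the constant $2^{|p|/2}$. The paper itself does not prove the lemma but merely cites \cite[Lemma 43]{CDDW20} and \cite{B73}; the argument you give is the standard elementary one found in those references, so your proposal simply supplies in full what the paper delegates to the literature.
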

\begin{proof}
	A proof of this can be found in~\cite[Lemma 43]{CDDW20} or~\cite{B73}.
\end{proof}
\begin{lemma}[log-derivative estimates]
	\label{lem:estlogdiff}
	For fixed $\varepsilon>0$ we have the formula
	\begin{equation}
		\label{eq:diffGseps}
		\nabla G^\varepsilon(v) = -\frac{1}{\varepsilon}\japangle{\frac{v}{\varepsilon}}^{-1}G^\varepsilon(v) \frac{v}{\varepsilon}.
	\end{equation}
	For $\mu\in\mathscr{P}(\Rd)$, denoting $\partial^i = \frac{\partial}{\partial v^i}$ and $\partial^{ij} = \frac{\partial^2}{\partial{v^i}\partial{v^j}}$, we obtain
	\begin{equation}
		\label{eq:extlogdiffsgeq1}
		\left|
		\nabla \log (\mu*G^\varepsilon)(v)
		\right|\leq \frac{1}{\varepsilon}, \quad \left|
		\partial^{ij} \log (\mu*G^\varepsilon)(v)
		\right|\leq \frac{4}{\varepsilon^2}.
	\end{equation}
\end{lemma}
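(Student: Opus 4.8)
The plan is to establish the formula \eqref{eq:diffGseps} by direct differentiation and then to bootstrap from it to control the first and second derivatives of $\log(\mu*G^\varepsilon)$. First I would compute $\nabla G^\varepsilon(v)$: since $G^\varepsilon(v) = \varepsilon^{-d} C e^{-\langle v/\varepsilon\rangle}$ and $\nabla_v \langle v/\varepsilon\rangle = \varepsilon^{-1}\langle v/\varepsilon\rangle^{-1}(v/\varepsilon)$ by the chain rule applied to $\langle z\rangle = (1+|z|^2)^{1/2}$, the claimed identity \eqref{eq:diffGseps} follows immediately. The key structural observation to extract from this is the pointwise bound $|\nabla G^\varepsilon(v)| \le \tfrac{1}{\varepsilon} G^\varepsilon(v)$, because $\langle v/\varepsilon\rangle^{-1}|v/\varepsilon| \le 1$ for all $v$ (as $|z| \le \langle z\rangle$). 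Convolving, one gets $|\nabla(\mu*G^\varepsilon)(v)| = |(\mu * \nabla G^\varepsilon)(v)| \le \tfrac{1}{\varepsilon}(\mu*G^\varepsilon)(v)$, and dividing by the strictly positive quantity $(\mu*G^\varepsilon)(v)$ gives the first bound in \eqref{eq:extlogdiffsgeq1}.

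For the Hessian bound, I would differentiate $\partial^i \log(\mu*G^\varepsilon) = \partial^i(\mu*G^\varepsilon)/(\mu*G^\varepsilon)$ once more in $v^j$, obtaining
\[
\partial^{ij}\log(\mu*G^\varepsilon) = \frac{\partial^{ij}(\mu*G^\varepsilon)}{\mu*G^\varepsilon} - \frac{\partial^i(\mu*G^\varepsilon)\,\partial^j(\mu*G^\varepsilon)}{(\mu*G^\varepsilon)^2}.
\]
The second term is bounded by $\tfrac{1}{\varepsilon^2}$ from the first-derivative estimate already obtained. For the first term, the plan is to show $|\partial^{ij} G^\varepsilon(v)| \le \tfrac{3}{\varepsilon^2} G^\varepsilon(v)$ (so that convolving and dividing yields $\le \tfrac{3}{\varepsilon^2}$, for a total of $\tfrac{4}{\varepsilon^2}$). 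This requires differentiating \eqref{eq:diffGseps} once more: writing $G^\varepsilon(v) = \varepsilon^{-d}C\,e^{-\langle v/\varepsilon\rangle}$ and setting $z = v/\varepsilon$, one needs to bound the matrix $\varepsilon^2\,\nabla^2 e^{-\langle z\rangle}/e^{-\langle z\rangle} = \nabla\langle z\rangle \otimes \nabla\langle z\rangle - \nabla^2\langle z\rangle$ entrywise. Here $\nabla\langle z\rangle = z/\langle z\rangle$ has entries bounded by $1$, and $\nabla^2\langle z\rangle = \langle z\rangle^{-1}I - \langle z\rangle^{-3} z\otimes z$ has entries bounded by $1$ in absolute value as well (since $\langle z\rangle^{-1}\le 1$ and $\langle z\rangle^{-3}|z_iz_j| \le \langle z\rangle^{-3}|z|^2 \le \langle z\rangle^{-1} \le 1$). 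Combining, each entry of $\varepsilon^2\,\partial^{ij}G^\varepsilon/G^\varepsilon$ is bounded by $1 + 1 = 2$; being slightly more careful one can arrange the stated constant, but the precise numerology is routine once this structure is in place.

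The main obstacle, such as it is, is purely bookkeeping: tracking the constants through the chain rule for $\langle\cdot\rangle$ and verifying that the pointwise bounds $|\nabla G^\varepsilon| \lesssim \varepsilon^{-1} G^\varepsilon$ and $|\nabla^2 G^\varepsilon| \lesssim \varepsilon^{-2} G^\varepsilon$ survive convolution against a probability measure. The one genuinely useful conceptual point — and the reason the exponential mollifier $G(v) = Ce^{-\langle v\rangle}$ is chosen rather than a Gaussian — is that $\nabla\log G$ and $\nabla^2\log G$ are \emph{globally bounded} for this profile, which is exactly what makes $\nabla^k\log(\mu*G^\varepsilon)$ bounded uniformly in $\mu$; a Gaussian would give $\nabla\log G(v) \sim -v$, which is unbounded, and the estimates \eqref{eq:extlogdiffsgeq1} would fail. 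Positivity of $\mu*G^\varepsilon$ everywhere on $\R^d$ (immediate since $G^\varepsilon>0$) is what licenses the division throughout.
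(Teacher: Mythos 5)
Your proof is correct: the identity \eqref{eq:diffGseps} follows from the chain rule, the pointwise bounds $|\nabla G^\varepsilon|\le \varepsilon^{-1}G^\varepsilon$ and $|\partial^{ij}G^\varepsilon|\le 2\varepsilon^{-2}G^\varepsilon$ survive convolution against a probability measure, and the quotient-rule decomposition of $\partial^{ij}\log(\mu*G^\varepsilon)$ then gives a bound of at most $3/\varepsilon^2\le 4/\varepsilon^2$, so even the small inconsistency in your constant bookkeeping is harmless. The paper itself does not prove this lemma but cites~\cite[Lemma 30]{CDDW20}; your direct computation is exactly the standard argument behind that reference, including the key observation that the exponential profile $Ce^{-\japangle{v}}$ (unlike a Gaussian) has globally bounded log-derivatives.
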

\begin{proof}
	This is proven in~\cite[Lemma 30]{CDDW20}.
\end{proof}

\begin{proposition}
	\label{prop:boundsKg}
	Fix $\varepsilon>0,\, g \in \mathscr{P}(\R^d)$, and $\gamma\in[-3,0]$. We have the following estimate
	\[
	|K_g(v,w)| \le \min\left(
	\frac{4}{\varepsilon^2}|v-w|^{3+\gamma},\frac{2}{\varepsilon}|v-w|^{2+\gamma}
	\right).
	\]
	Moreover, for fixed $f\in\mathscr{P}(\R^d)$, we have
	\[
	|U^\varepsilon[g,f](v)|\lesssim_\varepsilon\left\{
	\begin{array}{cl}
		M_{2+\gamma}(f)\japangle{v}^{2+\gamma}, 	&\gamma\in(-2,0] 	\\
		1, 	&\gamma\in[-3,-2]
	\end{array}
	\right..
	\]
\end{proposition}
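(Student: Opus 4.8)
The plan is to reduce the whole statement to two elementary bounds — a sup bound and a Lipschitz bound — on the single scalar function $L_g := G^\varepsilon * \log(g*G^\varepsilon)$. First I would record the two algebraic facts $|v-w|^{2+\gamma}\Pi[v-w] = A(v-w)$ and $\nabla G^\varepsilon * \log(g*G^\varepsilon) = \nabla L_g$; the latter is just differentiation under the convolution, legitimate because $\nabla G^\varepsilon$ decays exponentially (formula \eqref{eq:diffGseps}) while $\log(g*G^\varepsilon)$ grows at most linearly. With these,
\[
K_g(v,w) = -A(v-w)\bigl(\nabla L_g(v) - \nabla L_g(w)\bigr),
\]
and since $\Pi[v-w]$ is an orthogonal projection one has $\|A(v-w)\|_{op} = |v-w|^{2+\gamma}$, whence
\[
|K_g(v,w)| \le |v-w|^{2+\gamma}\,\bigl|\nabla L_g(v) - \nabla L_g(w)\bigr|.
\]
The two terms in the minimum will then come from estimating the last factor by $2\|\nabla L_g\|_\infty$ and by $|v-w|\cdot\mathrm{Lip}(\nabla L_g)$ respectively.

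For the sup bound I would write $\nabla L_g = G^\varepsilon * \nabla\log(g*G^\varepsilon)$ and combine $|\nabla\log(g*G^\varepsilon)| \le \varepsilon^{-1}$ (Lemma \ref{lem:estlogdiff}) with $\int G^\varepsilon = 1$ to get $\|\nabla L_g\|_\infty \le \varepsilon^{-1}$, which already delivers $|K_g(v,w)| \le \tfrac2\varepsilon|v-w|^{2+\gamma}$. For the Lipschitz bound I would instead leave the derivative on the mollifier and translate variables:
\[
\nabla L_g(v) - \nabla L_g(w) = \int_{\R^d}\nabla G^\varepsilon(u)\bigl(\log(g*G^\varepsilon)(v-u) - \log(g*G^\varepsilon)(w-u)\bigr)\,\rd u.
\]
The bracket is at most $|v-w|\,\|\nabla\log(g*G^\varepsilon)\|_\infty \le \varepsilon^{-1}|v-w|$ by the mean value theorem and Lemma \ref{lem:estlogdiff}, and $\int|\nabla G^\varepsilon| \le \varepsilon^{-1}$ follows from \eqref{eq:diffGseps} together with $|x|\japangle{x}^{-1}\le 1$; hence $|\nabla L_g(v) - \nabla L_g(w)| \le \varepsilon^{-2}|v-w| \le \tfrac4{\varepsilon^2}|v-w|$ and therefore $|K_g(v,w)| \le \tfrac4{\varepsilon^2}|v-w|^{3+\gamma}$. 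Taking the minimum of the two bounds completes the estimate on $K_g$.

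For the velocity I would integrate the pointwise bound on $|K_g(v,w)|$ against $\rd f(w)$. When $\gamma\in(-2,0]$ the exponent $2+\gamma$ is positive, so Lemma \ref{lem:peetre} applied to the pair $(v-w,\,v)$ with exponent $2+\gamma$ gives $|v-w|^{2+\gamma} \le \japangle{v-w}^{2+\gamma} \le 2^{(2+\gamma)/2}\japangle{v}^{2+\gamma}\japangle{w}^{2+\gamma}$, and integrating $\tfrac2\varepsilon|v-w|^{2+\gamma}$ against $f$ yields $|U^\varepsilon[g,f](v)| \lesssim_{\gamma,\varepsilon} \japangle{v}^{2+\gamma}M_{2+\gamma}(f)$. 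When $\gamma\in[-3,-2]$ one has $3+\gamma\ge 0$ and $2+\gamma\le 0$, so I would split on $\{|v-w|\le 1\}$ and $\{|v-w|\ge 1\}$: on the former $\tfrac4{\varepsilon^2}|v-w|^{3+\gamma}\le\tfrac4{\varepsilon^2}$, on the latter $\tfrac2\varepsilon|v-w|^{2+\gamma}\le\tfrac2\varepsilon$, so $|K_g|$ is bounded uniformly by a constant depending only on $\varepsilon$, and integrating against the probability measure $f$ gives $|U^\varepsilon[g,f](v)| \lesssim_\varepsilon 1$.

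Everything here is routine; the only point that actually requires attention is keeping the constants free of the dimension $d$. This is precisely why the Lipschitz estimate for $\nabla L_g$ is best carried out through the convolution/translation identity above rather than by bounding the Hessian of $L_g$ entrywise via Lemma \ref{lem:estlogdiff}, which would reassemble a $d\times d$ matrix and cost a factor of $d$. I do not expect any deeper obstacle, but the operator-norm bookkeeping — $\|\Pi\|_{op}=1$ and hence $\|A(z)\|_{op}=|z|^{2+\gamma}$ — is the one thing to keep straight.
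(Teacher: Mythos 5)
Your proof is correct and follows essentially the same route as the paper: both terms in the minimum come from the first-order ($L^\infty$) and second-order (Lipschitz) control of the mollified log supplied by \Cref{lem:estlogdiff}, and the velocity bound is obtained by integrating the pointwise bound on $K_g$ against $f$, using Peetre's inequality (\Cref{lem:peetre}) for $\gamma\in(-2,0]$ and a split at $|v-w|=1$ for $\gamma\in[-3,-2]$. The only deviation is in the $|v-w|^{3+\gamma}$ bound, where you move the derivative onto the mollifier and use $\int|\nabla G^\varepsilon|\le \varepsilon^{-1}$ together with the gradient bound, rather than invoking the Hessian estimate in~\eqref{eq:extlogdiffsgeq1} as the paper does; this is harmless and in fact yields the slightly sharper, dimension-free constant $\varepsilon^{-2}\le 4\varepsilon^{-2}$.
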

\begin{proof}
	We recall the expression
	\[
	K_g(v,w) = - |v-w|^{2+\gamma}\Pi[v-w](\nabla G^\varepsilon * \log [g*G^\varepsilon](v) - \nabla G^\varepsilon * \log [g*G^\varepsilon](w)).
	\]
	\underline{$|v-w| <1$}: Using the second order estimate in~\eqref{eq:extlogdiffsgeq1}, the difference of logarithms can be estimated by
	\begin{align*}
		&\left|
		\nabla G^\varepsilon * \log [g*G^\varepsilon](v) - \nabla G^\varepsilon * \log [g*G^\varepsilon](w)
		\right| 	\le \frac{4}{\varepsilon^2}|v-w|,
	\end{align*}
	giving the first estimate in the minimum.
	\\
	\underline{$|v-w| \ge 1$}: Bluntly apply the first order estimate in~\eqref{eq:extlogdiffsgeq1} onto each of the logarithms
	\[
	\|\nabla G^\varepsilon * \log [g*G^\varepsilon]\|_{L^\infty} \le \frac{1}{\varepsilon}.
	\]
	%
	
	The estimate for $U^\varepsilon$ follows by recalling $U^\varepsilon[g,f](v) = \int_{\Rd} K_g(v,w) \, \rd f(w)$ and Peetre's inequality from~\Cref{lem:peetre} for the case $\gamma\in(-2,0]$.
\end{proof}
The following estimate is adapted from~\cite[Equation (28)]{H09}.
\begin{lemma}[Pointwise difference in $K$]
	\label{lem:contourestimate}
	Fix $\varepsilon>0, \, g \in \mathscr{P}(\R^d)$, and $\gamma\in [-3,0]$. We have
	\[
	|K_g(v_1,w) - K_g(v_2,w)| \lesssim_{\varepsilon,\gamma}|v_1-v_2|\max\left(|v_1-w|^{2+\gamma},|v_2-w|^{2+\gamma}\right).
	\]
\end{lemma}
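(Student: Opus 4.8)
The idea is to fix $w$, regard $h(v):=K_g(v,w)$ as a function of $v$ alone, establish a pointwise bound $|\nabla_v h(v)|\lesssim_{\varepsilon,\gamma}|v-w|^{2+\gamma}$ for $v\ne w$, and then integrate it along the segment joining $v_2$ to $v_1$. Write $\psi(v):=\nabla G^\varepsilon * \log[g*G^\varepsilon](v)$, so that $h(v)=-|v-w|^{2+\gamma}\Pi[v-w](\psi(v)-\psi(w))$. Integrating by parts, $\psi=G^\varepsilon * \nabla\log[g*G^\varepsilon]$ and $\nabla\psi=G^\varepsilon * \nabla^2\log[g*G^\varepsilon]$, so Young's inequality and \Cref{lem:estlogdiff} give $\|\psi\|_{L^\infty}\le 1/\varepsilon$ and $\|\nabla\psi\|_{L^\infty}\lesssim_\varepsilon 1$; in particular $|\psi(v)-\psi(w)|\lesssim_\varepsilon|v-w|$ by the mean value theorem.

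For the gradient bound, set $z=v-w$ and write $|z|^{2+\gamma}\Pi[z]=|z|^{2+\gamma}I_d-|z|^{\gamma}\,z\otimes z$; differentiating in $z$ produces only terms of size $\lesssim_\gamma|z|^{1+\gamma}$. Hence, by the product rule, $\|\Pi[z]\|_{\mathrm{op}}=1$, the bound on $|\psi(v)-\psi(w)|$, and $\|\nabla\psi\|_{L^\infty}\lesssim_\varepsilon1$,
\[
|\nabla_v h(v)|\;\lesssim_{\varepsilon,\gamma}\; |z|^{1+\gamma}\cdot|z|\;+\;|z|^{2+\gamma}\;\lesssim_{\varepsilon,\gamma}\;|v-w|^{2+\gamma},\qquad v\ne w.
\]
Moreover \Cref{prop:boundsKg} gives $|h(v)|\le\tfrac{4}{\varepsilon^2}|v-w|^{3+\gamma}$, so for $\gamma>-3$ the function $h$ extends continuously to $v=w$ with $h(w)=0$.

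The segment step relies on the elementary contour estimate: for any $c>-1$ and $a,b\in\R^d$,
\[
\int_0^1 |(1-t)b+ta|^{c}\,\rd t\;\lesssim_c\;\max(|a|^{c},|b|^{c}).
\]
For $c\ge0$ this is immediate from $|(1-t)b+ta|\le\max(|a|,|b|)$. For $c\in(-1,0)$ one parametrises by arclength along the line through $a$ and $b$, writing $|(1-t)b+ta|=\sqrt{m^2+L^2(t-t_0)^2}$ with $L=|a-b|$ and $m$ the distance from the origin to the segment; the only nontrivial case is when the foot of the perpendicular lies inside the segment, where splitting the resulting one-dimensional integral at the smaller of the two endpoint distances and using $m\le\min(|a|,|b|)$ together with $c+1>0$ yields the bound $\lesssim_c\min(|a|,|b|)^{c}=\max(|a|^c,|b|^c)$. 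Applying this with $a=v_1-w$, $b=v_2-w$, $c=2+\gamma$ (legitimate precisely because $\gamma>-3$) controls $\int_0^1|(1-t)v_2+tv_1-w|^{2+\gamma}\,\rd t$ by $\max(|v_1-w|^{2+\gamma},|v_2-w|^{2+\gamma})$. I expect this contour estimate in the singular regime $\gamma\in(-3,-2)$, where $2+\gamma<0$ so that the naive supremum-over-the-segment bound for $|\nabla_v h|$ fails, to be the main obstacle.

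It remains to assemble. If the open segment between $v_1$ and $v_2$ avoids $w$, then $h\in C^1$ near the closed segment and the fundamental theorem of calculus combined with the previous two displays gives
\[
|h(v_1)-h(v_2)|\le|v_1-v_2|\int_0^1|\nabla_v h((1-t)v_2+tv_1)|\,\rd t\lesssim_{\varepsilon,\gamma}|v_1-v_2|\max(|v_1-w|^{2+\gamma},|v_2-w|^{2+\gamma}),
\]
which is the claim. If instead $w=(1-t^*)v_2+t^*v_1$ for some $t^*\in[0,1]$, split $|h(v_1)-h(v_2)|\le|h(v_1)-h(w)|+|h(w)-h(v_2)|$ and bound each term by $\tfrac{4}{\varepsilon^2}|v_i-w|^{3+\gamma}=\tfrac{4}{\varepsilon^2}|v_i-w|\cdot|v_i-w|^{2+\gamma}$ via \Cref{prop:boundsKg} and $h(w)=0$, noting $|v_i-w|\le|v_1-v_2|$ since $w$ lies on the segment. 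Finally, the borderline value $\gamma=-3$, where the contour integral may diverge, is treated separately by a dichotomy: if $|v_1-v_2|\ge\tfrac12\min(|v_1-w|,|v_2-w|)$ one uses the uniform bound $|K_g|\le 4/\varepsilon^2$ from \Cref{prop:boundsKg} directly, while otherwise the segment stays a definite distance from $w$ and the plain mean value bound for $|\nabla_v h|$ applies.
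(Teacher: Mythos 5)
Your proposal is correct, but in the singular range it takes a genuinely different route from the paper's proof. Both arguments start from the fundamental theorem of calculus together with the pointwise bound $|\nabla_v K_g(v,w)|\lesssim_{\varepsilon,\gamma}|v-w|^{2+\gamma}$; the paper invokes this as ``the derivative estimate in \Cref{prop:boundsKg}'' whereas you actually derive it (product rule on $|z|^{2+\gamma}\Pi[z]$ plus the bounds on $\nabla\log[g*G^\varepsilon]$ and $\nabla^2\log[g*G^\varepsilon]$ from \Cref{lem:estlogdiff}), which is a welcome addition. For $\gamma\in[-2,0]$ the two proofs coincide. For $\gamma\in[-3,-2)$ the paper keeps the integrand bounded pointwise by $\max\left(|v_1-w|^{2+\gamma},|v_2-w|^{2+\gamma}\right)$ by deforming the contour: whenever the straight segment comes within $\tfrac14\min(|v_1-w|,|v_2-w|)$ of the singularity it is replaced by a path that detours along a semicircular arc of radius $r\le|v_1-v_2|$ staying at distance at least $\tfrac14\min(|v_1-w|,|v_2-w|)$ from $w$; this works uniformly down to $\gamma=-3$ with no extra lemma. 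You instead stay on the straight segment and absorb the near-passage by the one-dimensional integrability of $|s|^{2+\gamma}$ for $2+\gamma>-1$, via the elementary estimate $\int_0^1|(1-t)b+ta|^{c}\,\rd t\lesssim_c\max(|a|^{c},|b|^{c})$ for $c>-1$ (your arclength argument for $c\in(-1,0)$ is sound), supplemented by two correct side cases: $w$ on the closed segment, handled by $|K_g|\lesssim_\varepsilon|v-w|^{3+\gamma}$ and $|v_i-w|\le|v_1-v_2|$, and the borderline $\gamma=-3$, handled by the dichotomy between $|v_1-v_2|\ge\tfrac12\min(|v_1-w|,|v_2-w|)$ (uniform bound $|K_g|\le 4/\varepsilon^2$) and its complement (segment stays at distance $\ge\tfrac12\min(|v_1-w|,|v_2-w|)$ from $w$). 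The trade-off: the paper's detour gives a constant uniform in $\gamma$ including $\gamma=-3$; your route is more elementary but its constant degenerates like $(3+\gamma)^{-1}$ as $\gamma\downarrow-3$, which is exactly why you need, and correctly supply, the separate argument at $\gamma=-3$ --- harmless here since the implicit constant is allowed to depend on $\gamma$. One cosmetic point: your first case should say the \emph{closed} segment avoids $w$ (if $w$ is an endpoint, $h$ is not $C^1$ near the closed segment), but that situation is in any event covered by your second case $t^*\in[0,1]$.
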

For completeness, we refer to~\Cref{sec:contourproof} for the proof of~\Cref{lem:contourestimate}.
\begin{proposition}[H\"older continuity of $U$]
	\label{lem:UHold}
	Fix $\gamma\in[-3,-2]$ and $g\in\mathscr{P}(\R^d)$. Then we have
	\[
	|U^\varepsilon[g](v_1) - U^\varepsilon[g](v_2)|\lesssim_\varepsilon |v_1 - v_2|^{3+\gamma}.
	\]
\end{proposition}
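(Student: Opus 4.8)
The plan is to write $U^\varepsilon[g](v)=\int_{\R^d}K_g(v,w)\,\rd g(w)$, recalling that $U^\varepsilon[g]=U^\varepsilon[g,g]$, so that
\[
U^\varepsilon[g](v_1)-U^\varepsilon[g](v_2)=\int_{\R^d}\bigl(K_g(v_1,w)-K_g(v_2,w)\bigr)\,\rd g(w),
\]
and to estimate the integrand by splitting $\R^d$ according to the distance from $w$ to $v_1$. Set $r:=|v_1-v_2|$ and decompose the domain into the "near-diagonal" region $\{w:|v_1-w|<2r\}$ and the "far" region $\{w:|v_1-w|\ge 2r\}$. The whole point is that for $\gamma\in[-3,-2]$ we have $3+\gamma\in[0,1]$ and $2+\gamma\in[-1,0]$, and these sign conditions are exactly what let each piece contribute at order $r^{3+\gamma}$.

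On the near-diagonal region I would use only the pointwise size bound from \Cref{prop:boundsKg}, namely $|K_g(v,w)|\le \tfrac{4}{\varepsilon^2}|v-w|^{3+\gamma}$, together with the triangle inequality:
\[
|K_g(v_1,w)-K_g(v_2,w)|\le \tfrac{4}{\varepsilon^2}\bigl(|v_1-w|^{3+\gamma}+|v_2-w|^{3+\gamma}\bigr).
\]
Since $|v_1-w|<2r$ forces $|v_2-w|<3r$, and $3+\gamma\ge 0$ makes $t\mapsto t^{3+\gamma}$ nondecreasing, both terms are $\le \tfrac{4}{\varepsilon^2}(3r)^{3+\gamma}\lesssim_\varepsilon r^{3+\gamma}$ uniformly in $w$; integrating against the probability measure $g$ preserves the bound.

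On the far region I would instead invoke \Cref{lem:contourestimate}, which gives $|K_g(v_1,w)-K_g(v_2,w)|\lesssim_{\varepsilon,\gamma}r\max(|v_1-w|^{2+\gamma},|v_2-w|^{2+\gamma})$. When $|v_1-w|\ge 2r$ we have $|v_2-w|\ge|v_1-w|-r\ge r$ and also $|v_1-w|\ge r$, so, since $2+\gamma\le 0$ makes $t\mapsto t^{2+\gamma}$ nonincreasing, the maximum is $\le r^{2+\gamma}$. Hence the integrand is $\lesssim_{\varepsilon,\gamma}r\cdot r^{2+\gamma}=r^{3+\gamma}$ uniformly in $w$ on this region, and again integrating against $g$ keeps the estimate. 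Adding the two contributions yields $|U^\varepsilon[g](v_1)-U^\varepsilon[g](v_2)|\lesssim_\varepsilon|v_1-v_2|^{3+\gamma}$.

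There is no serious obstacle here: the only care needed is in the choice of the splitting radius (a fixed multiple of $r$) and in noting that the restriction $\gamma\in[-3,-2]$ — i.e.\ $3+\gamma\in[0,1]$, $2+\gamma\le 0$ — is precisely what allows both pieces to be controlled by $r^{3+\gamma}$ uniformly, without appealing to any moment of $g$, which is why the statement is limited to this range of $\gamma$ rather than the full interval $[-3,0]$.
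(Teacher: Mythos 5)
Your proof is correct and follows essentially the same strategy as the paper's: split the integral according to whether $w$ is close to or far from $v_1,v_2$ on the scale $|v_1-v_2|$, use the pointwise size bound of \Cref{prop:boundsKg} on the near region (where $3+\gamma\ge 0$ controls it) and \Cref{lem:contourestimate} on the far region (where $2+\gamma\le 0$ controls the maximum). The only difference is the cosmetic choice of splitting set ($|v_1-w|<2|v_1-v_2|$ versus the paper's $|v_1-v_2|>\min(|v_1-w|,|v_2-w|)$), which does not change the argument.
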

\begin{proof}
	We split the integration region into two cases
	\begin{align*}
		&\quad U^\varepsilon[g](v_1) - U^\varepsilon[g](v_2) = \int_{\R^d}(K_g(v_1,w) - K_g(v_2,w)) \,\rd g(w) \\
		&= \left(
		\int_{|v_1-v_2| \le \min (|v_1-w|, \, |v_2-w|)} + \int_{|v_1-v_2| > \min (|v_1-w|, \, |v_2-w|)}
		\right)(K_g(v_1,w) - K_g(v_2,w)) \,\rd g(w) \\
		&=: I_1 + I_2.
	\end{align*}
	We claim both $|I_1|, \, |I_2| \lesssim_\varepsilon |v_1-v_2|^{3+\gamma}$. Starting with $I_1$ where $|v_1-v_2| \le \min (|v_1-w|, \, |v_2-w|)$, we use~\Cref{lem:contourestimate} and the fact that $2+\gamma\le 0$ to deduce
	\begin{align*}
		|I_1| &\lesssim_\varepsilon |v_1-v_2| \int_{|v_1-v_2| \le \min (|v_1-w|, \, |v_2-w|)} \max (|v_1-w|^{2+\gamma}, \, |v_2-w|^{2+\gamma}) \, \rd g(w) \\
		&\le |v_1-v_2|^{3+\gamma}.
	\end{align*}
	Turning to $I_2$ the other integration region, assume without loss of generality that $|v_1 - v_2| > |v_1 - w|$. By the triangle inequality we also have
	\[
	|v_2-w| \le 2|v_1 - v_2|.
	\]
	Putting these two estimates together and using the bound $|K_g(v,w)|\lesssim_\varepsilon \min (|v-w|^{3+\gamma}, \, |v-w|^{2+\gamma})$ from~\Cref{prop:boundsKg}, we have
	\begin{align*}
		&\quad |I_2| \lesssim_\varepsilon
		\int_{|v_1-v_2| >  \min (|v_1-w|, \, |v_2-w|)} \min (|v_1-w|^{3+\gamma}, |v_1 - w|^{2+\gamma}) \,\rd g(w) + \dots \\
		&\quad + \int_{|v_1-v_2| >  \min (|v_1-w|, \, |v_2-w|)}\min (|v_2-w|^{3+\gamma}, |v_2 - w|^{2+\gamma}) \,\rd g(w) \\
		&\lesssim |v_1-v_2|^{3+\gamma}.
	\end{align*}
\end{proof}

We can improve~\Cref{lem:UHold} to Lipschitz continuity by taking advantage of extra regularity properties of $g$.
\begin{proposition}[Lipschitz continuity of $U$]
	\label{prop:ULipLp}
	Fix $g\in\mathscr{P}(\R^d), \, \varepsilon>0$, and $\gamma\in(-3,0]$. In the case $\gamma\in(-3,-2)$, assume further that $g$ satisfies~\ref{A:gamma}. Then we have
	\[
	|U^\varepsilon[g](v^1) - U^\varepsilon[g](v^2)| \le \Lambda_\gamma(g,v^1, v^2) |v^1 - v^2|,
	\]
	where
	\[
	\Lambda_\gamma = \left\{
	\begin{array}{cl}
		C_\varepsilon M_{2+\gamma}(g)\left(
		\japangle{v^1}^{2+\gamma}+ \japangle{v^2}^{2+\gamma}
		\right), 	&\gamma\in [-2,0] 	\\
		C_{\varepsilon, \, \gamma, \, p', \, d}(1+ \|g\|_{L^p}),     &\gamma\in(-3,-2)
	\end{array}
	\right.,
	\]
	and the constants $C>0$ only depend on the quantities in the subscript.
\end{proposition}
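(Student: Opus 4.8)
The plan is to use the integral representation $U^\varepsilon[g](v^1) - U^\varepsilon[g](v^2) = \int_{\R^d}(K_g(v^1,w) - K_g(v^2,w))\,\rd g(w)$ and to estimate the integrand by differentiating $K_g$ in its first slot. Writing $K_g(v,w) = -|v-w|^{2+\gamma}\Pi[v-w]\,\Delta G(v,w)$ where $\Delta G(v,w) := \nabla G^\varepsilon * \log[g*G^\varepsilon](v) - \nabla G^\varepsilon * \log[g*G^\varepsilon](w)$, the Jacobian $\nabla_v K_g(v,w)$ splits into three pieces: one hitting $|v-w|^{2+\gamma}$ (size $\lesssim |v-w|^{1+\gamma}|\Delta G|$), one hitting $\Pi[v-w]$ (size $\lesssim |v-w|^{1+\gamma}|\Delta G|$ as well, since $\nabla_v \Pi[v-w]$ scales like $|v-w|^{-1}$), and one hitting $\Delta G(v,w)$ through the $v$-dependence of the first term (size $\lesssim |v-w|^{2+\gamma}\|\nabla^2 G^\varepsilon * \log[g*G^\varepsilon]\|_{L^\infty} \lesssim \varepsilon^{-2}|v-w|^{2+\gamma}$ by \Cref{lem:estlogdiff}). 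Using the first-order log-derivative bound $|\Delta G(v,w)|\le \tfrac1\varepsilon\min(2, \tfrac{4}{\varepsilon}|v-w|)$ from \Cref{lem:estlogdiff}, one sees that in all cases $|\nabla_v K_g(v,w)| \lesssim_\varepsilon \japangle{v-w}^{2+\gamma}$ when $\gamma\ge -2$, and more carefully $|\nabla_v K_g(v,w)| \lesssim_\varepsilon |v-w|^{1+\gamma}$ for $|v-w|$ small when $\gamma < -2$. (One should be careful that $\Pi$ and its derivative are bounded operators away from the diagonal; the factor $|v-w|^{-1}$ from $\nabla\Pi$ is exactly compensated because $\Pi[v-w]$ multiplies against $\Delta G$ which vanishes to first order near the diagonal via the $\min$ bound, so no extra singularity appears.) By the mean value inequality along the segment $[v^2, v^1]$, $|K_g(v^1,w) - K_g(v^2,w)| \le |v^1 - v^2|\sup_{v\in[v^1,v^2]}|\nabla_v K_g(v,w)|$.

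For $\gamma\in[-2,0]$ the bound $|\nabla_v K_g(v,w)|\lesssim_\varepsilon \japangle{v-w}^{2+\gamma}$ is already integrable against any probability measure; integrating in $w$ and applying Peetre's inequality (\Cref{lem:peetre}) to convert $\japangle{v-w}^{2+\gamma} \lesssim \japangle{v}^{2+\gamma}\japangle{w}^{|2+\gamma|}$ yields
\[
|U^\varepsilon[g](v^1) - U^\varepsilon[g](v^2)| \lesssim_\varepsilon |v^1-v^2|\int_{\R^d}\big(\japangle{v^1}^{2+\gamma}+\japangle{v^2}^{2+\gamma}\big)\japangle{w}^{|2+\gamma|}\,\rd g(w),
\]
and since $|2+\gamma|\le 2$ for $\gamma\in[-2,0]$ this is bounded by $C_\varepsilon M_{2+\gamma}(g)(\japangle{v^1}^{2+\gamma}+\japangle{v^2}^{2+\gamma})|v^1-v^2|$, giving $\Lambda_\gamma$ in the first case. (For $\gamma = -2$ the weight is trivial and one recovers the global Lipschitz bound of \Cref{prop:LipKg}.) The only subtlety here is matching the moment order: $\japangle{w}^{|2+\gamma|}$ with $|2+\gamma| = -(2+\gamma) \le 2$, so $M_{2+\gamma}$ (which denotes the $|2+\gamma|$-th... ) — here one uses that the integral $\int \japangle{w}^{|2+\gamma|}\rd g$ is controlled by $M_{2}(g)$ if needed, but the cleanest statement keeps $M_{2+\gamma}$ as written; I would just verify the indexing convention in $M_p$ matches.

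For $\gamma\in(-3,-2)$ the derivative bound degenerates to $|\nabla_v K_g(v,w)|\lesssim_\varepsilon |v-w|^{1+\gamma}$ near the diagonal (with $1+\gamma\in(-2,-1)$, so $|v-w|^{1+\gamma}$ is singular but of low order), and $\lesssim_\varepsilon 1$ for $|v-w|\ge 1$. The non-integrable-looking singularity is absorbed by the $L^p$ regularity of $g$: split $\int_{\R^d} = \int_{|v-w|<1} + \int_{|v-w|\ge 1}$, bound the far part trivially by a constant (mass $1$), and on the near part apply Hölder with exponents $p, p'$ to get $\int_{|v-w|<1}|v-w|^{1+\gamma}\,\rd g(w) \le \|g\|_{L^p}\big(\int_{|v-w|<1}|v-w|^{(1+\gamma)p'}\,\rd w\big)^{1/p'}$; the inner integral converges precisely when $(1+\gamma)p' > -d$, which is exactly \ref{A:gamma} (note $(1+\gamma)p' > (2+\gamma)p' > -d$ since $p' > 0$ and $1+\gamma > 2+\gamma$). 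This produces the constant $C_{\varepsilon,\gamma,p',d}(1+\|g\|_{L^p})$. I would also double check the segment $[v^1, v^2]$ version of this: since $v$ ranges over the segment, $|v-w|$ may be smaller than both $|v^1-w|, |v^2-w|$, but the singularity $|v-w|^{1+\gamma}$ integrated in $w$ over a unit ball is still finite uniformly, so taking the sup over the segment before integrating in $w$ is harmless.

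\textbf{Main obstacle.} The delicate point is verifying that $\nabla_v K_g$ has no worse singularity than $|v-w|^{1+\gamma}$ — specifically that the term $\nabla_v\Pi[v-w]$, which individually behaves like $|v-w|^{2+\gamma}\cdot|v-w|^{-1} = |v-w|^{1+\gamma}$, does not combine with $\Delta G$ to create trouble. Here the key is that $|\Delta G(v,w)|\lesssim_\varepsilon \min(1, |v-w|/\varepsilon)$ from the second-order log bound, so near the diagonal $\Delta G$ gains a factor $|v-w|$ that kills the extra $|v-w|^{-1}$ from differentiating $\Pi$; away from the diagonal there is no singularity at all. Getting this bookkeeping right (and tracking the $\varepsilon$ powers through each of the three Leibniz terms, since the claimed constant depends on $\varepsilon$) is where the real work lies; everything after that is the routine Hölder/moment split described above.
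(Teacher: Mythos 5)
Your $\gamma\in[-2,0]$ branch is essentially correct and coincides with the paper's route (for $2+\gamma\ge 0$ the straight-line fundamental theorem of calculus bound on $\nabla_v K_g$ is exactly how \Cref{lem:contourestimate} is proved, and the Peetre/moment step matches, with $M_{2+\gamma}(g)=\int\japangle{w}^{2+\gamma}\rd g(w)$ as you suspected). The gap is in the very soft case $\gamma\in(-3,-2)$, and it is twofold. First, a sign slip: $1+\gamma<2+\gamma$, so $(1+\gamma)p'<(2+\gamma)p'$ and \ref{A:gamma} does \emph{not} imply $(1+\gamma)p'>-d$; for instance $d=3$, $\gamma=-5/2$, $p'\in[2,6)$ satisfies \ref{A:gamma} while $\int_{|z|\le1}|z|^{(1+\gamma)p'}\rd z=\infty$. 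Moreover the bound $|\nabla_vK_g|\lesssim_\varepsilon|v-w|^{1+\gamma}$ you use there is weaker than what your own ``main obstacle'' observation gives: since $|\Delta G(v,w)|\lesssim_\varepsilon\min(1,|v-w|)$, all three Leibniz terms are $\lesssim_\varepsilon|v-w|^{2+\gamma}$, which is the exponent that actually matches \ref{A:gamma}.

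Second, and more seriously, even with the corrected bound $|\nabla_vK_g(v,w)|\lesssim_\varepsilon|v-w|^{2+\gamma}$ the mean value inequality along the straight chord $[v^1,v^2]$ does not close when $\gamma<-2$: the chord can pass arbitrarily close to $w$, and $\sup_{v\in[v^1,v^2]}|v-w|^{2+\gamma}=\mathrm{dist}(w,[v^1,v^2])^{2+\gamma}$. After H\"older, you must integrate $\mathrm{dist}(w,[v^1,v^2])^{(2+\gamma)p'}$ over a tubular neighbourhood of the segment, which converges only if $(2+\gamma)p'>1-d$ (one dimension is lost transversally to the segment), a strictly stronger condition than \ref{A:gamma}; e.g.\ $d=3$, $\gamma=-5/2$, $p'\in[4,6)$ satisfies \ref{A:gamma} but makes your integral diverge. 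This is exactly why the paper does not take the sup over the chord: \Cref{lem:contourestimate} is proved in \Cref{sec:contourproof} by deforming the path around the singularity along a semicircular arc kept at distance at least $\tfrac14\min(|v^1-w|,|v^2-w|)$ from $w$, which produces the \emph{endpoint} bound $|K_g(v^1,w)-K_g(v^2,w)|\lesssim_{\varepsilon,\gamma}|v^1-v^2|\max\left(|v^1-w|^{2+\gamma},|v^2-w|^{2+\gamma}\right)$; feeding that into your H\"older/moment split then yields the stated $\Lambda_\gamma$ under \ref{A:gamma} alone. So your argument needs this contour-deformation input (or an equivalent device avoiding the sup over the chord); as written, the very soft case does not follow from the hypotheses.
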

\begin{proof}
	The starting point is the application of~\Cref{lem:contourestimate} to first write
	\begin{align*}
		&\quad \left|U^\varepsilon[g](v^1) - U^\varepsilon[g](v^2)\right| = \left|\int_{\R^d} (K_g(v^1, w) - K_g(v^2, w)) \,\rd g(w)\right| \\
		&\lesssim_\varepsilon |v^1-v^2| \int_{\R^d} \max (|v^1-w|^{2+\gamma}, \, |v^2-w|^{2+\gamma}) \,\rd g(w) \\
		&\le |v^1-v^2| \underbrace{\int_{\R^d} (|v^1-w|^{2+\gamma} + |v^2-w|^{2+\gamma})\, \rd g(w)}_{=:I}.
	\end{align*}
	We estimate $I$ depending on the value of $\gamma$.
	\\
	\ul{The case $\gamma\in(-3,-2)$}: By splitting the integration region and using the fact that $g\in L^p$, we obtain
	\begin{align*}
		I &\lesssim 1 + \sup_{v \in \R^d}\int_{|v - w| \le 1}|v-w|^{2+\gamma}\, \rd g(w) \\
		&\le 1 + \sup_{v \in \R^d}\left(
		\int_{|v-w|\le 1}|v-w|^{(2+\gamma)p'}\,\rd w
		\right)^\frac{1}{p'}\|g\|_{L^p} \\
		&\lesssim_{\gamma, \, p, \, d}1 + \|g\|_{L^p}.
	\end{align*}
	\ul{The case $\gamma\in[-2,0]$}: The integrand is no longer singular so we use Peetre's inequality from~\Cref{lem:peetre}
	\[
	|v - w|^{2+\gamma} \le \japangle{v-w}^{2+\gamma} \lesssim_{\gamma}\japangle{v}^{2+\gamma}\japangle{w}^{2+\gamma}
	\]
	for $v = v^1, \, v^2$ into $I$ to get
	\begin{align*}
		I &\lesssim \int_{\R^d}\left(\japangle{v^1}^{2+\gamma} + \japangle{v^2}^{2+\gamma}\right)\japangle{w}^{2+\gamma} \, \rd g(w).
	\end{align*}
\end{proof}
\subsection{The velocity field as a function of measures}
The previous results established estimates for the pointwise variation of $K_g$ and $U^\varepsilon[g]$ given a fixed measure $g$. We now investigate the measure-wise variation of $K$ and $U^\varepsilon$ given a fixed point.
\begin{lemma}
	\label{lem:difflog}
	Fix $\varepsilon>0$ and let $\tau$ be an optimal transport map in $W_\infty$ between $f,g \in \mathscr{P}_c(\R^d)$ so that $g = \tau \# f$. Then, we have
	\[
	\left|\log [g*G^\varepsilon](v) - \log [f*G^\varepsilon](v)\right| \le \frac{1}{\varepsilon}W_\infty(g,f), \quad \forall v \in \R^d.
	\]
\end{lemma}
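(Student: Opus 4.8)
The plan is to use the explicit form of the mollifier $G^\varepsilon(v) = C\varepsilon^{-d}e^{-\japangle{v/\varepsilon}}$ together with the fact that $\nabla \log G^\varepsilon$ is bounded by $1/\varepsilon$ (which follows from \eqref{eq:diffGseps}, since $|\nabla G^\varepsilon(v)| = \frac{1}{\varepsilon}\japangle{v/\varepsilon}^{-1}|v/\varepsilon| G^\varepsilon(v) \le \frac{1}{\varepsilon}G^\varepsilon(v)$). The key observation is that because $g = \tau\#f$, we can write the convolution $g*G^\varepsilon(v) = \int G^\varepsilon(v - \tau(y))\,\rd f(y)$, so that both $f*G^\varepsilon(v)$ and $g*G^\varepsilon(v)$ are averages against the same measure $f$ of translates of $G^\varepsilon$, with the translation displaced by at most $\|\tau - \mathrm{id}\|_{L^\infty(f)} = W_\infty(f,g)$.

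First I would fix $v$ and set $h := W_\infty(f,g)$. Writing $a := f*G^\varepsilon(v) = \int G^\varepsilon(v-y)\,\rd f(y)$ and $b := g*G^\varepsilon(v) = \int G^\varepsilon(v-\tau(y))\,\rd f(y)$, I want to compare $\log a$ and $\log b$. The clean way is a pointwise comparison of the integrands: since $\log G^\varepsilon$ has gradient bounded by $1/\varepsilon$ everywhere, for each $y$ we have $e^{-h/\varepsilon}G^\varepsilon(v-y) \le G^\varepsilon(v-\tau(y)) \le e^{h/\varepsilon}G^\varepsilon(v-y)$ because $|\tau(y) - y| \le h$ for $f$-a.e.\ $y$. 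Integrating against $\rd f(y)$ preserves these inequalities, giving $e^{-h/\varepsilon}a \le b \le e^{h/\varepsilon}a$, hence $|\log b - \log a| \le h/\varepsilon = \frac{1}{\varepsilon}W_\infty(f,g)$, which is exactly the claim.

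The only point requiring a little care is justifying the pointwise bound $|\log G^\varepsilon(x) - \log G^\varepsilon(x')| \le \frac{1}{\varepsilon}|x - x'|$: this is immediate from the mean value theorem once we know $|\nabla \log G^\varepsilon| \le 1/\varepsilon$, and that bound is the content of \eqref{eq:diffGseps}. One should also note that $a, b > 0$ strictly (the Gaussian-type mollifier is everywhere positive and $f$ is a probability measure), so the logarithms are well-defined and finite for every $v$. I do not anticipate a genuine obstacle here; the main thing to get right is phrasing the argument so that the optimal map $\tau$ is used to couple the two convolutions against a \emph{common} reference measure $f$, which is what converts an $L^\infty$-transport bound into a uniform pointwise bound on the log-difference. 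An alternative, essentially equivalent route would be to interpolate $\log[((1-s)f + s g)*G^\varepsilon]$ — but the coupling argument above is cleaner and avoids differentiating under the integral.
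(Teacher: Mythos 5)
Your argument is correct. It rests on exactly the same two ingredients as the paper's proof — rewriting $g*G^\varepsilon(v)=\int_{\R^d}G^\varepsilon(v-\tau(y))\,\rd f(y)$ via the pushforward $g=\tau\#f$, and the log-Lipschitz bound $|\nabla G^\varepsilon|\le \frac{1}{\varepsilon}G^\varepsilon$ coming from~\eqref{eq:diffGseps} — but the mechanics differ. The paper applies the fundamental theorem of calculus in the transport parameter, differentiating $t\mapsto \log\int_{\R^d}G^\varepsilon\bigl(v-(t\tau(w)+(1-t)w)\bigr)\,\rd f(w)$ and bounding the resulting ratio of integrals by $\frac{1}{\varepsilon}W_\infty(f,g)$, which implicitly requires differentiating under the integral sign. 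You instead use the mean value theorem pointwise on the kernel, $|\log G^\varepsilon(x)-\log G^\varepsilon(x')|\le \frac{1}{\varepsilon}|x-x'|$, to get the two-sided multiplicative comparison $e^{-h/\varepsilon}G^\varepsilon(v-y)\le G^\varepsilon(v-\tau(y))\le e^{h/\varepsilon}G^\varepsilon(v-y)$ for $f$-a.e.\ $y$ with $h=W_\infty(f,g)$, and then simply integrate. This is marginally more elementary (no interchange of derivative and integral to justify), it makes the strict positivity and finiteness of both convolutions explicit, and it yields the slightly stronger conclusion that the ratio $\frac{g*G^\varepsilon(v)}{f*G^\varepsilon(v)}$ lies in $[e^{-h/\varepsilon},e^{h/\varepsilon}]$, of which the stated log-estimate is an immediate consequence; the paper's interpolation route, which you mention as an alternative, is the one actually taken there.
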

\begin{proof}
	We first note that
	\[
	g*G^\varepsilon(v) = \int_{\R^d} G^\varepsilon(v-w) \,\rd g(w) = \int_{\R^d}G^\varepsilon(v - \tau(w)) \,\rd f(w).
	\]
	Using the fundamental theorem of calculus, we express the difference of the logarithms as
	\begin{align*}
		\log [g*G^\varepsilon](v) &- \log [f*G^\varepsilon](v) \\ &= \int_0^1 \frac{\rd}{\rd t}\log \left[
		\int_{\R^d}G^\varepsilon(v - (t\tau(w) + (1-t)w)) \,\rd f(w)
		\right]\rd t \\
		&= -\int_0^1 \frac{\int_{\R^d}(\tau(w) - w)\cdot \nabla G^\varepsilon(v - (t\tau(w) + (1-t)w)) \,\rd f(w)}{\int_{\R^d}G^\varepsilon(v - (t\tau(w) + (1-t)w) \,\rd f(w)}\, \rd t.
	\end{align*}
	By definition, we have $|\tau(w) - w| \le W_\infty(g,f)$ and moreover recalling~\eqref{eq:diffGseps}, we see that
	\[
	|\nabla G^\varepsilon| \le \frac{1}{\varepsilon}G^\varepsilon.
	\]
	Applying these two estimates into the previous computations, we have
	\[
	\left|
	\log [g*G^\varepsilon](v) - \log [f*G^\varepsilon](v)
	\right| \le \frac{W_\infty(g,f)}{\varepsilon}.
	\]
\end{proof}
The following technical estimates hinge on~\Cref{lem:difflog}. 
\begin{lemma}[Measure-wise difference in $K$]
	\label{lem:diffKg}
	Fix $g^i\in\mathscr{P}_c(\R^d)$ for $i=1, \, 2$ and $\gamma\in[-3,0]$. Then for every $v,\, w\in \R^d$, we have the estimate
	\[
	|K_{g^1}(v,w) - K_{g^2}(v,w)| \lesssim \min \left(
	\frac{1}{\varepsilon^3}|v-w|^{3+\gamma}, \, \frac{2}{\varepsilon^2}|v-w|^{2+\gamma}
	\right) W_\infty(g^1, \, g^2).
	\]
\end{lemma}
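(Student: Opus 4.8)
\emph{Proof plan.} The plan is to reduce everything to an $L^\infty$ bound on the difference of the mollified log-densities, furnished by \Cref{lem:difflog}, and then to transfer the remaining derivatives onto the smooth mollifier $G^\varepsilon$. Write $h := \log[g^1*G^\varepsilon] - \log[g^2*G^\varepsilon]$ and $D(u):= \nabla G^\varepsilon * h(u)$, so that $\nabla G^\varepsilon * \log[g^i*G^\varepsilon](v) - \nabla G^\varepsilon * \log[g^i*G^\varepsilon](w)$ differs between $i=1$ and $i=2$ exactly by $D(v)-D(w)$. Since the factor $|v-w|^{2+\gamma}\Pi[v-w]$ is common to $K_{g^1}(v,w)$ and $K_{g^2}(v,w)$,
\[
K_{g^1}(v,w) - K_{g^2}(v,w) = -|v-w|^{2+\gamma}\,\Pi[v-w]\bigl(D(v) - D(w)\bigr),
\]
and as $\Pi[v-w]$ is an orthogonal projection its operator norm is at most $1$. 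Hence it suffices to establish the two estimates $|D(v)-D(w)| \le \tfrac{2}{\varepsilon^2}W_\infty(g^1,g^2)$ and $|D(v)-D(w)| \lesssim \tfrac{1}{\varepsilon^3}|v-w|\,W_\infty(g^1,g^2)$; multiplying by $|v-w|^{2+\gamma}$ and taking the minimum then yields the claim. Note $g^1,g^2 \in \mathscr{P}_c(\R^d)$, so the logarithms are well defined and \Cref{lem:difflog} applies, giving $\|h\|_{L^\infty(\R^d)} \le \tfrac1\varepsilon W_\infty(g^1,g^2)$; this is the only information about $g^1,g^2$ that enters.

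For the first estimate I would use the pointwise bound $|D(u)| \le \|\nabla G^\varepsilon\|_{L^1}\,\|h\|_{L^\infty}$ together with $|\nabla G^\varepsilon| \le \tfrac1\varepsilon G^\varepsilon$ from \eqref{eq:diffGseps} (so $\|\nabla G^\varepsilon\|_{L^1}\le \tfrac1\varepsilon$), obtaining $|D(u)| \le \tfrac{1}{\varepsilon^2}W_\infty(g^1,g^2)$ uniformly in $u$, whence $|D(v)-D(w)| \le |D(v)|+|D(w)| \le \tfrac{2}{\varepsilon^2}W_\infty(g^1,g^2)$; this produces the $\tfrac{2}{\varepsilon^2}|v-w|^{2+\gamma}$ term. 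For the second estimate I would instead retain $v-w$ and apply the fundamental theorem of calculus to $\nabla G^\varepsilon$:
\[
|D(v)-D(w)| \le \int_{\R^d}\bigl|\nabla G^\varepsilon(v-z) - \nabla G^\varepsilon(w-z)\bigr|\,|h(z)|\,\rd z \le \|h\|_{L^\infty}\,|v-w|\,\|\nabla^2 G^\varepsilon\|_{L^1(\R^d)}.
\]
By the scaling $G^\varepsilon(v)=\varepsilon^{-d}G(v/\varepsilon)$ one has $\|\nabla^2 G^\varepsilon\|_{L^1} = \varepsilon^{-2}\|\nabla^2 G\|_{L^1}$, and differentiating \eqref{eq:diffGseps} once more (or using the explicit $G(v)=Ce^{-\japangle{v}}$) shows $|\nabla^2 G^\varepsilon| \lesssim \varepsilon^{-2}G^\varepsilon$, so $\|\nabla^2 G^\varepsilon\|_{L^1}\lesssim \varepsilon^{-2}$. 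Combined with $\|h\|_{L^\infty}\le \tfrac1\varepsilon W_\infty(g^1,g^2)$ this gives $|D(v)-D(w)|\lesssim \tfrac{1}{\varepsilon^3}|v-w|\,W_\infty(g^1,g^2)$, producing the $\tfrac{1}{\varepsilon^3}|v-w|^{3+\gamma}$ term and completing the argument after taking the minimum.

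The only genuine obstacle is the second-derivative control of $G^\varepsilon$: since \Cref{lem:difflog} provides only an $L^\infty$ bound on $h$ and no control on $\nabla h$, every derivative appearing in $D(v)-D(w)$ must be moved onto the mollifier, which forces the factor $\|\nabla^2 G^\varepsilon\|_{L^1}\sim \varepsilon^{-2}$ and hence the power $\varepsilon^{-3}$ in the final estimate; this is elementary given the exact form of $G$ but is what dictates the dependence on $\varepsilon$. A minor point to verify is that $\nabla G^\varepsilon * h$ is indeed $C^1$ with $\nabla(\nabla G^\varepsilon * h) = \nabla^2 G^\varepsilon * h$, justifying differentiation under the integral sign; this follows at once from $h\in L^\infty(\R^d)$ (by \Cref{lem:difflog}) and $G^\varepsilon\in W^{2,1}(\R^d)$ (by \eqref{eq:diffGseps} and its further derivative).
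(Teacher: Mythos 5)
Your proposal is correct and follows essentially the same route as the paper: both write the difference as the common factor $-|v-w|^{2+\gamma}\Pi[v-w]$ acting on $\nabla G^\varepsilon * h$ evaluated at $v$ and $w$ with $h = \log[g^1*G^\varepsilon]-\log[g^2*G^\varepsilon]$, use \Cref{lem:difflog} for $\|h\|_{L^\infty}\le \varepsilon^{-1}W_\infty(g^1,g^2)$, get the $\varepsilon^{-2}|v-w|^{2+\gamma}$ bound by the blunt $L^\infty$ estimate with $|\nabla G^\varepsilon|\le \varepsilon^{-1}G^\varepsilon$, and get the $\varepsilon^{-3}|v-w|^{3+\gamma}$ bound by moving one more derivative onto the mollifier via the fundamental theorem of calculus and $\|\nabla^2 G^\varepsilon\|_{L^1}\lesssim \varepsilon^{-2}$ (Young's inequality). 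The only cosmetic difference is that you apply the FTC to $\nabla G^\varepsilon$ inside the convolution rather than to $\nabla G^\varepsilon * h$ along the segment, which yields the identical estimate.
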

\begin{proof}
	Here, we need to estimate
	\begin{align*}
		&\quad K_{g^1}(v,w) - K_{g^2}(v,w) \\ &= -|v-w|^{2+\gamma}\Pi[v-w] \left(
		\nabla G^\varepsilon * \left\{
		\log [g^1*G^\varepsilon] - \log [g^2*G^\varepsilon]
		\right\}(v) -\nabla G^\varepsilon * \left\{
		\log [g^1*G^\varepsilon] - \log [g^2*G^\varepsilon]
		\right\}(w)
		\right).
	\end{align*}
	By the fundamental theorem of calculus, we have an estimate for this difference
	\begin{align*}
		&\quad  |v-w|^{2+\gamma}\left|\int_0^1 
		\frac{\rd}{\rd t}\nabla G^\varepsilon*\{
		\log [g^1*G^\varepsilon] - \log [g^2*G^\varepsilon]
		\}(tv + (1-t)w)\rd t
		\right| \\
		&\le |v-w|^{3+\gamma}\left\|
		\nabla^2 G^\varepsilon * \left\{\log [g^1*G^\varepsilon] - \log [g^2*G^\varepsilon]\right\}
		\right\|_{L^\infty}.
	\end{align*}
	Using~\Cref{lem:difflog} and the comparison $|\nabla^2 G^\varepsilon| \lesssim \frac{1}{\varepsilon^2} G^\varepsilon$ (an extension of~\eqref{eq:diffGseps}), we apply Young's convolution inequality to deduce
	\[
	|K_{g^1}(v,w) - K_{g^2}(v,w)| \lesssim \frac{1}{\varepsilon^3}|v-w|^{3+\gamma}W_\infty(g^1,g^2).
	\]
	On the other hand, without estimating second order derivatives, we can bluntly prove
	\begin{align*}
		|K_{g^1}(v,w) - K_{g^2}(v,w)| &\le |v-w|^{2+\gamma}\left(\left|
		\nabla G^\varepsilon * \left\{
		\log [g^1*G^\varepsilon] - \log [g^2*G^\varepsilon]
		\right\}(v)\right|\right. \\
		&\quad +\left.\left|\nabla G^\varepsilon * \left\{
		\log [g^1*G^\varepsilon] - \log [g^2*G^\varepsilon]
		\right\}(w)
		\right|\right) \\
		&\le 2|v-w|^{2+\gamma}\| \nabla G^\varepsilon * \left\{
		\log [g^1*G^\varepsilon] - \log [g^2*G^\varepsilon]
		\right\} \|_{L^\infty}.
	\end{align*}
	Recalling~\eqref{eq:diffGseps} and~\Cref{lem:difflog} which say
	\[
	|\nabla G^\varepsilon| \le \frac{1}{\varepsilon}G^\varepsilon, \quad \left|
	\log [g^1*G^\varepsilon] - \log [g^2*G^\varepsilon]
	\right| \le \frac{1}{\varepsilon}W_\infty(g^1,g^2),
	\]
	we use Young's convolution inequality again to get
	\[
	|K_{g^1}(v,w) - K_{g^2}(v,w)| \le \frac{2}{\varepsilon^2}|v-w|^{2+\gamma}W_\infty(g^1,g^2).
	\]
\end{proof}
Under minimal assumptions on the probability measures, we can obtain a H\"older estimate with respect to the $W_\infty$ metric.
\begin{lemma}
	\label{lem:gfvel}
	Fix $g^i, \, f^i \in \mathscr{P}(\R^d)$ for $i=1, \, 2$ and $\gamma\in(-3,-2]$. Then, we have the estimate
	\[
	|U^\varepsilon[g^1,f^1](v) - U^\varepsilon[g^2,f^2](v)| \lesssim W_\infty(g^1,g^2) + W_\infty(f^1,f^2)^{3+\gamma}.
	\]
\end{lemma}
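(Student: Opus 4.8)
The plan is to vary one argument at a time. First I would write
\[
U^\varepsilon[g^1,f^1](v) - U^\varepsilon[g^2,f^2](v) = \bigl(U^\varepsilon[g^1,f^1](v) - U^\varepsilon[g^2,f^1](v)\bigr) + \bigl(U^\varepsilon[g^2,f^1](v) - U^\varepsilon[g^2,f^2](v)\bigr),
\]
and aim to bound the first bracket by $W_\infty(g^1,g^2)$ and the second by $W_\infty(f^1,f^2)^{3+\gamma}$, uniformly in $v$ and with constants depending only on $\varepsilon$ and $\gamma$. One may assume both Wasserstein distances are finite, since otherwise there is nothing to prove, and I set $R := W_\infty(f^1,f^2)$.

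For the first bracket, using $U^\varepsilon[g,f^1](v) = \int_{\R^d}K_g(v,w)\,\rd f^1(w)$ it equals $\int_{\R^d}(K_{g^1}(v,w) - K_{g^2}(v,w))\,\rd f^1(w)$, so I would invoke \Cref{lem:diffKg}, which bounds the integrand by $\min\bigl(\varepsilon^{-3}|v-w|^{3+\gamma},\, 2\varepsilon^{-2}|v-w|^{2+\gamma}\bigr)W_\infty(g^1,g^2)$; the estimate of \Cref{lem:diffKg} carries over from $\mathscr{P}_c(\R^d)$ to $\mathscr{P}(\R^d)$ once the optimal map in \Cref{lem:difflog} is replaced by an optimal plan. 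Since $3+\gamma>0$ the factor $|v-w|^{3+\gamma}$ is $\le 1$ on $\{|v-w|\le 1\}$, and since $2+\gamma\le 0$ the factor $|v-w|^{2+\gamma}$ is $\le 1$ on $\{|v-w|\ge 1\}$, so the whole prefactor is $\le C_\varepsilon$; integrating against the probability measure $f^1$ gives $|U^\varepsilon[g^1,f^1](v) - U^\varepsilon[g^2,f^1](v)|\lesssim_\varepsilon W_\infty(g^1,g^2)$.

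The second bracket is the substantive part. I would take an optimal $W_\infty$-transport plan $\pi$ between $f^1$ and $f^2$, so that $|w_1 - w_2|\le R$ for $\pi$-a.e.\ $(w_1,w_2)$, and rewrite the bracket as $\int_{\R^{2d}}\bigl(K_{g^2}(v,w_1) - K_{g^2}(v,w_2)\bigr)\,\rd\pi(w_1,w_2)$. If $R\ge 1$ I would just use $|U^\varepsilon[g^2,f^i](v)|\lesssim_\varepsilon 1\le R^{3+\gamma}$ from the case $\gamma\in[-3,-2]$ of \Cref{prop:boundsKg}, so assume $R<1$; then I would split the $\pi$-integral into the far region $\{|v-w_1|>2R\}$ and the near region $\{|v-w_1|\le 2R\}$. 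On the far region, $|v-w_2|\ge\tfrac{1}{2}|v-w_1|$, so, using the skew-symmetry $K_{g^2}(v,w)=-K_{g^2}(w,v)$ and applying \Cref{lem:contourestimate} in the first slot, $|K_{g^2}(v,w_1) - K_{g^2}(v,w_2)| \lesssim_{\varepsilon,\gamma} |w_1 - w_2|\max\bigl(|v-w_1|^{2+\gamma},|v-w_2|^{2+\gamma}\bigr)\lesssim_\gamma R\,(2R)^{2+\gamma}\lesssim_\gamma R^{3+\gamma}$, where $2+\gamma\le 0$ and $|v-w_1|>2R$ are used. On the near region $|v-w_1|\le 2R$ and $|v-w_2|\le 3R$, so the crude bound $|K_{g^2}(v,w)|\le 4\varepsilon^{-2}|v-w|^{3+\gamma}$ from \Cref{prop:boundsKg} together with $3+\gamma\in(0,1]$ gives $|K_{g^2}(v,w_1) - K_{g^2}(v,w_2)|\lesssim_\varepsilon R^{3+\gamma}$. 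Integrating over the probability measure $\pi$ and adding the two brackets completes the bound.

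I expect the second bracket to be the only real obstacle: the map $w\mapsto K_{g^2}(v,w)$ is genuinely only H\"older-$(3+\gamma)$ across $w=v$ — its pointwise Lipschitz bound from \Cref{lem:contourestimate} degenerating like $|v-w|^{2+\gamma}$ — so no estimate linear in $W_\infty(f^1,f^2)$ can hold and the exponent $3+\gamma$ is forced. What makes it work is the near/far decomposition at the transport scale $R$, exactly as in the proof of \Cref{lem:UHold}: on the far region a factor $R$ comes from the transport distance while the kernel weight is bounded by $(2R)^{2+\gamma}$, the product being $\lesssim R^{3+\gamma}$; on the near region the kernel is already of order $R^{3+\gamma}$. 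The only additional trick is invoking skew-symmetry of $K$ to convert the variation in the second argument into the first-argument statement of \Cref{lem:contourestimate}.
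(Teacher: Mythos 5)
Your proposal is correct and follows essentially the same route as the paper's proof: the same decomposition into the $g$-variation term $\int (K_{g^1}-K_{g^2})\,\rd f^1$ handled by \Cref{lem:diffKg} and the $f$-variation term handled via an optimal $W_\infty$ coupling with a near/far split at scale $2\,W_\infty(f^1,f^2)$, using the blunt bound of \Cref{prop:boundsKg} on the near region and \Cref{lem:contourestimate} (through skew-symmetry of $K$) on the far region. Your use of an optimal plan instead of a map, and the separate treatment of $W_\infty(f^1,f^2)\ge 1$, are minor technical refinements (the former in fact patches the mismatch between the statement on $\mathscr{P}(\R^d)$ and \Cref{lem:difflog}/\Cref{lem:diffKg} being phrased for $\mathscr{P}_c(\R^d)$ with optimal maps), not a different argument.
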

\begin{proof}
	Starting from the definition, we have
	\begin{align*}
		&\quad U^\varepsilon[g^1,\, f^1](v) - U^\varepsilon[g^2,\, f^2](v)   \\ &=  \int_{\R^d}K_{g^1}(v,w) \, \rd f^1(w) - \int_{\R^d}K_{g^2}(v,w)\, \rd f^2(w)     \\
		&= \int_{\R^d}(K_{g^1}(v,w) - K_{g^2}(v,w))\, \rd f^1(w) + \int_{\R^d}K_{g^2}(v,w) \, \rd (f^1 - f^2)(w) \\
		&=: I_1 + I_2.
	\end{align*}
	We claim the following estimates
	\[
	|I_1| \lesssim_\varepsilon W_\infty(g^1,g^2), \quad |I_2| \lesssim_\varepsilon W_\infty(f^1,f^2)^{3+\gamma}.
	\]
	The term $I_1$ is almost completely treated by~\Cref{lem:diffKg}. We can further estimate the minimum by
	\begin{align*}
		&\quad |I_1| \lesssim_\varepsilon W_\infty(g^1,g^2) \int_{\R^d} \min (|v-w|^{3+\gamma},|v-w|^{2+\gamma}) \, \rd f^1(w) \\
		&\lesssim W_\infty(g^1,g^2) \int_{\R^d}\japangle{v-w}^{2+\gamma} \,\rd f^1(w).
	\end{align*}
	When $\gamma\in(-3,-2]$, simply estimate $\japangle{v-w}^{2+\gamma} \le 1$. This takes care of $I_1$ so we focus on $I_2$ for the rest of this proof.
	
	Firstly, take $\tau$ an optimal transport map in $W_\infty$ between $f^1$ and $f^2$ i.e.
	\[
	W_\infty(f^1,f^2) = \mathrm{esssup}_{w\in\R^d}|\tau(w) - w|.
	\]
	Moreover, the following identity holds in $\mathscr{P}(\R^d); \, f^2 = \tau\# f^1.$ This allows us to rewrite the difference $I_2$ as
	\begin{equation}
		\label{eq:Itwotau}
		I_2 = \int_{\R^d}\left[
		K_{g^2}(v,w) - K_{g^2}(v,\tau(w))
		\right] \,\rd f^1(w).
	\end{equation}
	We split the integration region in~\eqref{eq:Itwotau} into $\mathcal{A} = \{w \in \R^d \, | \, |v-w| < 2 W_\infty(f^1,f^2)\}$ and its complement $\R^d \setminus \mathcal{A}$. In the set $\mathcal{A}$, we begin with the blunt $L^\infty$ bound on $K$ from~\Cref{prop:boundsKg} which implies
	\begin{align*}
		&\quad |K_{g^2}(v,w) - K_{g^2}(v,\tau(w)) | \\
		&\lesssim_{\varepsilon} \min (|v-w|^{3+\gamma}, \, |v-w|^{2+\gamma}) + \min (|v-\tau(w)|^{3+\gamma}, \, |v-\tau(w)|^{2+\gamma}) \\
		&\lesssim W_\infty(f^1,f^2)^{3+\gamma} + |v-\tau(w)|^{3+\gamma}.
	\end{align*}
	For the second term, we simply use the triangle inequality
	\[
	|v-\tau(w)| \le |v-w| + |\tau(w) - w| \le 3W_\infty(f^1,f^2).
	\]
	This gives
	\begin{equation}
		\label{eq:diffKItwo}
		|K_{g^2}(v,w) - K_{g^2}(v,\tau(w)) | \lesssim_{\varepsilon} W_\infty(f^1,f^2)^{3+\gamma}
	\end{equation}
	which is independent of $w$ so we have
	\[
	\int_{\mathcal{A}}|K_{g^2}(v,w) - K_{g^2}(v,\tau(w)) | \,\rd f^1(w) \lesssim_{\varepsilon} W_\infty(f^1,f^2)^{3+\gamma}.
	\]
	Turning to the complement region $\R^d \setminus \mathcal{A}$ given as $\{ w \in \R^d \, | \, |v-w| \ge 2W_\infty(f^1,f^2)\},$ we use~\Cref{lem:contourestimate} to obtain
	\[
	|K_{g^2}(v,w) - K_{g^2}(v,\tau(w)) | \lesssim_{\varepsilon} |\tau(w) - w| \max (|v-w|^{2+\gamma}, \, |v-\tau(w)|^{2+\gamma}).
	\]
	Recalling that $2+\gamma \le 0$, the reverse triangle inequality yields
	\[
	|v-\tau(w)| \ge |v-w| - |\tau(w) - w| \ge W_\infty(f^1,f^2),
	\]
	because $|v-w|\ge 2W_\infty(f^1,f^2)$.
	
	Therefore, from the previous estimate, we obtain
	\[
	|K_{g^2}(v,w) - K_{g^2}(v,\tau(w))| \lesssim_\varepsilon |\tau(w) - w| W_\infty(f^1,f^2)^{2+\gamma} \le W_\infty(f^1, f^2)^{3+\gamma}.
	\]
	This is exactly the same as~\eqref{eq:diffKItwo} for $\mathcal{A}$. Integrating both inequalities against $f^1$ yields
	\begin{align*}
		|I_2| &\le \left(
		\int_{\mathcal{A}} + \int_{\R^d \setminus \mathcal{A}}
		\right)|K_{g^2}(v,w) - K_{g^2}(v,\tau(w)) | \, \rd f^1(w) \\
		&\lesssim_\varepsilon W_\infty(f^1,f^2)^{3+\gamma}.
	\end{align*}
\end{proof}
If we impose more assumptions on the probability measures, in particular~\ref{A:gamma}, we can derive linear stability with respect to the $W_\infty$ metric.
\begin{proposition}[Linear stability]
	\label{prop:linest}
	Fix $g^i, \, f^i \in \mathscr{P}_c(\R^d)$ for $i=1, \, 2$ and $\gamma\in(-3,0]$.
	For $\gamma\in(-3,-2)$, assume further that $f^i$ satisfies~\ref{A:gamma}. Then we have the estimate
	\begin{align*}
		&\quad |U^\varepsilon[g^1, \, f^1](v) - U^\varepsilon[g^2, \, f^2](v)| \lesssim_\varepsilon 	\\ 
		&\left\{
		\begin{array}{cl}
			\japangle{v}^{2+\gamma}\left[M_{2+\gamma}(f^1)W_\infty(g^1, \, g^2) + \left\{M_{2+\gamma}(f^1) + M_{2+\gamma}(f^2)\right\}W_\infty(f^1, \, f^2)\right], 	&\gamma\in[-2,0] 	\\
			W_\infty(g^1, \, g^2) + (1 + \|f^1\|_{L^p} + \|f^2\|_{L^p})W_\infty(f^1, \, f^2), 	&\gamma\in(-3,-2)
		\end{array}
		\right..
	\end{align*}
\end{proposition}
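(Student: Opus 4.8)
The plan is to follow the decomposition used in the proof of \Cref{lem:gfvel}. Take $\tau$ an optimal transport map in $W_\infty$ between $f^1$ and $f^2$, so that $f^2 = \tau\#f^1$, and write
\[
U^\varepsilon[g^1, f^1](v) - U^\varepsilon[g^2, f^2](v) = I_1 + I_2,
\]
\[
I_1 := \int_{\R^d}\bigl(K_{g^1}(v,w) - K_{g^2}(v,w)\bigr)\,\rd f^1(w), \qquad I_2 := \int_{\R^d}\bigl(K_{g^2}(v,w) - K_{g^2}(v,\tau(w))\bigr)\,\rd f^1(w).
\]
Here $I_1$ records the variation of $K$ in its measure argument, which is exactly what \Cref{lem:diffKg} controls, while $I_2$ records the variation in the spatial argument, controlled by \Cref{lem:contourestimate}. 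Summing the two bounds obtained below produces the claimed dichotomy.

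For $I_1$, \Cref{lem:diffKg} gives $|K_{g^1}(v,w)-K_{g^2}(v,w)| \lesssim \min\bigl(\varepsilon^{-3}|v-w|^{3+\gamma},\,2\varepsilon^{-2}|v-w|^{2+\gamma}\bigr)\,W_\infty(g^1,g^2)$. When $\gamma\in(-3,-2)$, since $3+\gamma>0>2+\gamma$ one has $\min\bigl(|v-w|^{3+\gamma},|v-w|^{2+\gamma}\bigr)\lesssim\japangle{v-w}^{2+\gamma}\le1$, so $|I_1|\lesssim_\varepsilon W_\infty(g^1,g^2)$, with no dependence on $f^1$. When $\gamma\in[-2,0]$, keep the second branch and apply Peetre's inequality (\Cref{lem:peetre}) in the form $|v-w|^{2+\gamma}\le\japangle{v-w}^{2+\gamma}\lesssim_\gamma\japangle{v}^{2+\gamma}\japangle{w}^{2+\gamma}$; integrating in $w$ and recognising $\int\japangle{w}^{2+\gamma}\,\rd f^1(w)=M_{2+\gamma}(f^1)$ gives $|I_1|\lesssim_\varepsilon\japangle{v}^{2+\gamma}M_{2+\gamma}(f^1)\,W_\infty(g^1,g^2)$.

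For $I_2$, the key point is that $K_{g^2}$ is skew-symmetric, hence $|K_{g^2}(v,w)-K_{g^2}(v,\tau(w))|=|K_{g^2}(w,v)-K_{g^2}(\tau(w),v)|$, and \Cref{lem:contourestimate} applies to the pair $(w,\tau(w))$, giving $|K_{g^2}(v,w)-K_{g^2}(v,\tau(w))|\lesssim_{\varepsilon,\gamma}|w-\tau(w)|\max\bigl(|v-w|^{2+\gamma},|v-\tau(w)|^{2+\gamma}\bigr)\le W_\infty(f^1,f^2)\bigl(|v-w|^{2+\gamma}+|v-\tau(w)|^{2+\gamma}\bigr)$. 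Integrating against $f^1$ and pushing the second term forward through $\tau$ yields
\[
|I_2|\lesssim_{\varepsilon,\gamma}W_\infty(f^1,f^2)\left(\int_{\R^d}|v-w|^{2+\gamma}\,\rd f^1(w)+\int_{\R^d}|v-z|^{2+\gamma}\,\rd f^2(z)\right).
\]
For $\gamma\in[-2,0]$ the integrand is non-singular and Peetre bounds each integral by $\japangle{v}^{2+\gamma}M_{2+\gamma}(f^i)$. For $\gamma\in(-3,-2)$, split each integral at $|v-\cdot|=1$: the far part is bounded by $1$, and on the near part Hölder with exponents $p,p'$ together with~\ref{A:gamma}, whose condition $p'(2+\gamma)>-d$ is exactly what makes $\int_{|v-w|<1}|v-w|^{(2+\gamma)p'}\,\rd w$ finite, bounds each integral by $1+\|f^i\|_{L^p}$. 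Adding the $I_1$ and $I_2$ estimates gives precisely the two cases of the statement.

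The step I expect to be the main obstacle is the $\gamma\in(-3,-2)$ estimate of $I_2$. The analogous term in \Cref{lem:gfvel} was controlled only at the H\"older rate $W_\infty(f^1,f^2)^{3+\gamma}$ with $3+\gamma<1$, because no $L^p$ control was available there and the singular moment $\int|v-w|^{2+\gamma}\,\rd f^i(w)$ may diverge. Upgrading to the linear rate forces us to use the full strength of \Cref{lem:contourestimate} (which supplies the factor $|w-\tau(w)|\le W_\infty(f^1,f^2)$) together with the local integrability $|v-\cdot|^{2+\gamma}\in L^{p'}_{\mathrm{loc}}$ guaranteed by~\ref{A:gamma}; one should check that the exponent relation defining~\ref{A:gamma} is exactly the integrability threshold being invoked. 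The remaining manipulations are routine once \Cref{lem:diffKg}, \Cref{lem:contourestimate} and \Cref{lem:peetre} are in hand.
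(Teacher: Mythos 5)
Your proposal is correct and follows essentially the same route as the paper: the identical $I_1+I_2$ decomposition via the optimal $W_\infty$ map $\tau$, with \Cref{lem:diffKg} (plus Peetre) handling $I_1$, and skew-symmetry of $K_{g^2}$ together with \Cref{lem:contourestimate} handling $I_2$, followed by the same Peetre versus Hölder-with-$(2+\gamma)p'>-d$ dichotomy. Your closing remark about \ref{A:gamma} being exactly the integrability threshold for $|v-\cdot|^{2+\gamma}\in L^{p'}_{\mathrm{loc}}$ is precisely how the paper closes the very soft case.
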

\begin{proof}
	Our starting point repeats the proof of~\Cref{lem:gfvel} above. Using the same notation from there, we split
	\begin{align*}
		&\quad U^\varepsilon[g^1, \, f^1](v) - U^\varepsilon[g^2, \, f^2](v) \\
		&= \int_{\R^d}(K_{g^1}(v,w) - K_{g^2}(v,w))\, \rd f^1(w)  + \int_{\R^d}K_{g^2}(v,w) \, \rd (f^1 - f^2)(w) \\
		&=: I_1 + I_2.
	\end{align*}
	We inherit the estimate for $I_1$ from the proof of~\Cref{lem:gfvel} which reads, using Peetre's inequality in~\Cref{lem:peetre} for $\gamma\in[-2,0]$,
	\[
	|I_1| \lesssim_\varepsilon W_\infty(g^1, \, g^2) \times \left\{
	\begin{array}{cl}
		1,    &\gamma\in (-3,-2)  \\
		M_{2+\gamma}(f^1) \japangle{v}^{2+\gamma},    &\gamma\in[-2,0] 
	\end{array}
	\right..
	\]
	We focus entirely on $I_2$; in the case $\gamma\in(-3,-2)$, we claim that
	\[
	|I_2| \lesssim_\varepsilon(1 + \|f^1\|_{L^p} + \|f^2\|_{L^p}) W_\infty(f^1, \, f^2).
	\]
	In the case $\gamma\in[-2,0]$, we claim that
	\[
	|I_2| \lesssim_\varepsilon \japangle{v}^{2+\gamma} (M_{2+\gamma}(f^1) + M_{2+\gamma}(f^2))W_\infty(f^1, \, f^2).
	\]
	In both cases, we rewrite $I_2$ in the following way; take $\tau$ an optimal transport map between $f^1$ and $f^2$ in $W_\infty$ so that we have
	\[
	I_2 = \int_{\R^d} (K_{g^2}(v,w) - K_{g^2}(v,\tau(w))\, \rd f^1(w), \quad f^2 = \tau \# f^1.
	\]
	Applying~\Cref{lem:contourestimate} and recalling the (anti-)symmetry of $K_{g^2}(v,w) = -K_{g^2}(w,v)$, we have
	\begin{align*}
		|I_2| &\lesssim_\varepsilon \int_{\R^d} |w - \tau(w)| \max \left(
		|v-w|^{2+\gamma}, \, |v-\tau(w)|^{2+\gamma}
		\right)\, \rd f^1(w) \\
		&\le W_\infty(f^1, \, f^2) \left(
		\int_{\R^d} (|v-w|^{2+\gamma} + |v-\tau(w)|^{2+\gamma})\, \rd f^1(w)
		\right).
	\end{align*}
	We split the sum and reformulate the second term in terms of $f^2$ using $\tau$ to obtain
	\begin{equation}
		\label{eq:splitlin}
		|I_2| \lesssim_\varepsilon W_\infty(f^1, \, f^2) \left(
		\int_{\R^d} |v-w|^{2+\gamma} \,\rd f^1(w) + \int_{\R^d} |v-w|^{2+\gamma} \,\rd f^2(w)
		\right).
	\end{equation}
	\ul{The case $\gamma\in(-3,-2)$}: By partitioning $\R^d$ into $\{w\in \R^d \, | \, |v-w|\le 1\}$ and its complement, a standard application of H\"older's inequality gives
	\[
	\int_{\R^d}|v-w|^{2+\gamma}\,\rd f^1(w) \le 1 + \left(
	\int_{|v-w| \le 1}|v-w|^{(2+\gamma)p'}
	\right)^\frac{1}{p'} \| f^1\|_{L^p}
	\]
	and similarly for $f^2$. Using the assumption $(2+\gamma)p' > -d$, inequality~\eqref{eq:splitlin} is further refined to
	\[
	|I_2| \lesssim_{\varepsilon, \, d, \, \gamma, \, p} (1 + \|f^1\|_{L^p} + \|f^2\|_{L^p}) W_\infty(f^1, \, f^2).
	\]
	\ul{The case $\gamma\in[-2,0]$}: Since $2+\gamma\ge 0$, we use Peetre's inequality~\Cref{lem:peetre} to estimate
	\[
	|v-w|^{2+\gamma} \le \japangle{v-w}^{2+\gamma} \lesssim \japangle{v}^{2+\gamma} \japangle{w}^{2+\gamma}.
	\]
	Inserting this into~\eqref{eq:splitlin}, we get
	\[
	|I_2| \lesssim_\varepsilon \japangle{v}^{2+\gamma} W_\infty(f^1, \, f^2) \left(\int_{\R^d}\japangle{w}^{2+\gamma} \,\rd (f^1 + f^2)(w)\right).
	\]
\end{proof}

\section{The continuum model}
\label{sec:existcts}
This section is devoted to the proof of~\Cref{thm:existmf}; the well-posedness of~\eqref{eq:epslan}. To fix notation, we seek solutions in the following spaces
\[
X_\gamma = X_\gamma(T):= \left\{
\begin{array}{ll}
	C([0,T]; \mathscr{P}_c(\R^d)), 	&\gamma\in[-2,0] 	\\
	C([0,T]; \mathscr{P}_c(\R^d)) \cap L^\infty(0,T; L^p(\R^d)), 	&\gamma\in(-3,-2)
\end{array}
\right..
\]
For $\gamma\in(-3,-2)$, the exponent $p$ corresponds to that of~\ref{A:gamma}.
In particular, we endow $X_\gamma$ with the metric
\[
d_\infty(f^1, \, f^2) := \sup_{t \in [0,T]}W_\infty(f^1(t),f^2(t)), \quad \forall f^1, \, f^2 \in X_\gamma.
\]
Given $g \in X_\gamma$, we first want to find $f \in X_\gamma$ solving
\begin{equation}
	\tag{P}
	\label{eq:contractmap}
	\partial_t f + \nabla\cdot(fU^\varepsilon[g])=0, \quad f(t=0) = f^0. 
\end{equation}
Well-posedness of~\eqref{eq:epslan} then comes from ensuring the map $g\mapsto f$ just described has a unique fixed point in a closed subspace of $X_\gamma$. 

Given a curve $g\in X_\gamma$, we denote by $\Phi_g$ the characteristic flow corresponding to~\eqref{eq:contractmap} satisfying
\begin{equation}
	\label{eq:contODE}
	\frac{\rd}{\rd t}\Phi_g(t,v) = U^\varepsilon[g](\Phi_g(t,v)), \quad \Phi_g(0,v) = v \in \R^d.
\end{equation}

\begin{proposition}
	\label{prop:Pwellposed}
	Fix $\varepsilon>0, \gamma\in(-3,0], \, g\in X_\gamma$, and initial condition $f^0$ satisfying~\ref{A:cpctsupp} and~\ref{A:gamma}. Then, $f(t) = \Phi_g(t,\cdot)\#f^0$ is the unique weak solution in $C([0,T];\mathscr{P}(\R^d))$ to~\eqref{eq:contractmap}.
\end{proposition}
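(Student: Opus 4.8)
The plan is to read~\eqref{eq:contractmap} as a \emph{linear} continuity equation with the prescribed, time-dependent velocity field $b(t,v):=U^\varepsilon[g(t)](v)$ and to construct its solution by the method of characteristics. The statement then splits into three pieces: well-posedness of the characteristic ODE~\eqref{eq:contODE} on the whole of $[0,T]$, verification that $f(t)=\Phi_g(t,\cdot)\#f^0$ solves~\eqref{eq:contractmap} weakly, and uniqueness of weak solutions.

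First I would record the uniform control of $g$ on $[0,T]$. Since $t\mapsto g(t)$ is $W_\infty$-continuous with values in $\mathscr{P}_c(\R^d)$ and the Hausdorff distance of supports is dominated by $W_\infty$, the map $t\mapsto\sup\{|v|:v\in\operatorname{supp}g(t)\}$ is continuous, hence bounded by some $R=R(g,T)$; therefore $M_q(g(t))\le C(q,R)$ uniformly in $t$ for every $q$, and when $\gamma\in(-3,-2)$ the norm $\|g(t)\|_{L^p}$ is also uniformly bounded because $g\in X_\gamma$. Feeding these bounds into~\Cref{prop:boundsKg} and~\Cref{prop:ULipLp} shows that $b(t,\cdot)$ is, uniformly in $t\in[0,T]$, bounded on bounded sets with $|b(t,v)|\lesssim_{\varepsilon,R}\japangle{v}^{2+\gamma}$ (and $\lesssim_\varepsilon 1$ when $\gamma\le-2$) and Lipschitz on bounded sets, while~\Cref{prop:linest} together with the $d_\infty$-continuity of $g$ yields continuity of $t\mapsto b(t,v)$. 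Thus $b$ is a Carath\'eodory field, locally Lipschitz in $v$, and the Cauchy--Lipschitz theorem provides, for each $v$, a unique maximal solution of~\eqref{eq:contODE}.

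The genuinely delicate point is that $\Phi_g$ should not blow up before time $T$: when $\gamma$ is close to $0$ the growth estimate $|b(t,v)|\lesssim\japangle{v}^{2+\gamma}$ is super-linear and on its own would permit finite-time escape of characteristics. This is rescued by the projection structure of the Landau kernel. Differentiating $|\Phi_g|^2$ along~\eqref{eq:contODE}, writing $U^\varepsilon[g(t)](\Phi_g)=\int K_{g(t)}(\Phi_g,w)\,\rd g_t(w)$ and using the identity $\Pi[\Phi_g-w]\Phi_g=\Pi[\Phi_g-w]w$ (so the radial part of the velocity drops out) together with $|w|\le R$, $\|\nabla G^\varepsilon*\log[g*G^\varepsilon]\|_{L^\infty}\le 1/\varepsilon$ and~\Cref{lem:peetre}, one gets $\frac{\rd}{\rd t}|\Phi_g|^2\lesssim_{\varepsilon,R}\japangle{\Phi_g}^{2+\gamma}$, i.e.\ $\frac{\rd}{\rd t}|\Phi_g|\lesssim|\Phi_g|^{1+\gamma}$ for $|\Phi_g|$ large; since $1+\gamma\le 1$, this precludes blow-up. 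Hence $\Phi_g(t,\cdot)$ maps $\operatorname{supp}f^0$ into a fixed ball $B_{R_T}$ for all $t\in[0,T]$; on that ball $b(t,\cdot)$ is uniformly Lipschitz, so Gr\"onwall applied to $|\Phi_g(t,v)-\Phi_g(t,v')|$ and to the reverse flow makes $\Phi_g(t,\cdot)$ bi-Lipschitz, uniformly for $t\in[0,T]$.

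Given this, the rest is classical continuity-equation theory~\cite{G03,BLR11}. Because $f^0\in\mathscr{P}_c(\R^d)$ and $\Phi_g(t,\cdot)$ is a homeomorphism, $f(t):=\Phi_g(t,\cdot)\#f^0$ is a well-defined curve in $C([0,T];\mathscr{P}_c(\R^d))$, and differentiating $\int\phi(\Phi_g(t,v))\,\rd f^0(v)$ for $\phi\in C_c^\infty(\R^d)$, using~\eqref{eq:contODE}, shows that $f$ is a weak solution of~\eqref{eq:contractmap}. For uniqueness, any weak solution $\tilde f\in C([0,T];\mathscr{P}(\R^d))$ is, by the superposition principle, concentrated on integral curves of $b$; these curves are unique by Cauchy--Lipschitz and start from $\operatorname{supp}f^0$, so necessarily $\tilde f(t)=\Phi_g(t,\cdot)\#f^0=f(t)$ (equivalently, one controls $W_\infty(\tilde f(t),f(t))$ by Gr\"onwall using the Lipschitz bound for $b$ on the common bounded support). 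Finally, when $\gamma\in(-3,-2)$, the bi-Lipschitz bounds on $\Phi_g(t,\cdot)$ and the change-of-variables formula give $f(t)\in L^p$ with $\|f(t)\|_{L^p}$ controlled on $[0,T]$, so in fact $f\in X_\gamma$. I expect the a priori bound of the third paragraph, ruling out blow-up of characteristics for $\gamma$ near $0$ via the projection structure, to be the main obstacle; once it is in place, every other step is routine.
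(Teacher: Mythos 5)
Your proposal is correct and follows essentially the same route as the paper: local Lipschitz continuity of $U^\varepsilon[g]$ from~\Cref{prop:ULipLp} plus Cauchy--Lipschitz for the characteristic flow, exclusion of blow-up of $\Phi_g$ via the projection identity $\Pi[v-w]v=\Pi[v-w]w$, and then the push-forward representation with uniqueness from classical continuity-equation theory (the paper simply cites~\cite[Theorem 2.3.5]{G03} where you invoke superposition/Gr\"onwall). The growth estimate you derive inline in your third paragraph is precisely the content of~\Cref{lem:growthPhi}, which the paper states and proves separately and then quotes in this proof.
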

\begin{proof}
	The (local) Lipschitz continuity of $U^\varepsilon[g]$ is provided by~\Cref{prop:ULipLp} so the characteristic system~\eqref{eq:contODE} has a unique solution, $\Phi_g$, up to the flow map's maximal time of existence $T^*>0$. By~\cite[Theorem 2.3.5]{G03}, $f(t) = \Phi_g(t,\cdot)\# f^0$ is the unique weak solution in $C([0,T^*];\mathscr{P}(\R^d))$. For $\gamma\in(-3,-2)$, \Cref{prop:ULipLp}, implies global Lipschitz regularity of $U^\varepsilon$ hence $\Phi_g$ is globally defined and we can directly take $T^* = T$. For $\gamma\in[-2,0]$, \Cref{lem:growthPhi} excludes blow up of $\Phi_g$ so  we can take $T^* = T$ here too.
\end{proof}

\begin{lemma}
	\label{lem:growthPhi}
	
	For $\varepsilon>0$ and $g\in X_\gamma$, let $\Phi_g$ be the flow map of~\eqref{eq:contODE} with maximal time of existence $T^*>0$. Then, we have the estimates
	\[
	\japangle{\Phi_g}\le \left\{
	\begin{array}{cl}
		\japangle{v}\exp\left\{
		C_\varepsilon \left[\sup_{s\in[0,T]}M_2(g(s))\right]t
		\right\}, 	&\gamma\in(-2,0] \\
		\japangle{v} + C_\varepsilon t, 	&\gamma\in(-3,-2]
	\end{array}
	\right., \quad \forall t\in[0,T^*].
	\]
	Here, $C_\varepsilon>0$ is a constant depending only on $\varepsilon>0$. In particular, $\Phi_g$ extends to a global solution of~\eqref{eq:contODE} on $[0,T]$. 
\end{lemma}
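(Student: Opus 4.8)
The plan is to derive a differential inequality for the scalar quantity $t\mapsto \japangle{\Phi_g(t,v)}$ along a single characteristic, then integrate using Grönwall in the $\gamma\in(-2,0]$ case and directly in the $\gamma\in(-3,-2]$ case. First I would compute, for $t$ in the interval of existence,
\[
\frac{\rd}{\rd t}\japangle{\Phi_g(t,v)} = \frac{\Phi_g(t,v)}{\japangle{\Phi_g(t,v)}} \cdot U^\varepsilon[g(t)](\Phi_g(t,v)),
\]
using \eqref{eq:contODE} and the identity $\nabla \japangle{v} = v/\japangle{v}$. Since $|\Phi_g|/\japangle{\Phi_g} \le 1$, this is bounded in absolute value by $|U^\varepsilon[g(t)](\Phi_g(t,v))|$, so everything reduces to the velocity bounds already established in \Cref{prop:boundsKg}.

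For $\gamma\in(-3,-2]$, \Cref{prop:boundsKg} gives $|U^\varepsilon[g(t),f](v)|\lesssim_\varepsilon 1$ uniformly in $v$ and in the measure, so $\left|\frac{\rd}{\rd t}\japangle{\Phi_g}\right|\le C_\varepsilon$, and integrating from $0$ to $t$ with $\Phi_g(0,v)=v$ yields $\japangle{\Phi_g(t,v)}\le \japangle{v}+C_\varepsilon t$ directly. For $\gamma\in(-2,0]$, \Cref{prop:boundsKg} gives $|U^\varepsilon[g(t),f](v)|\lesssim_\varepsilon M_{2+\gamma}(f)\japangle{v}^{2+\gamma}$ with $f=g(t)$; since $2+\gamma\le 2$ and $g(t)$ is a probability measure, $M_{2+\gamma}(g(t))\le M_2(g(t))+1 \lesssim \sup_{s\in[0,T]}M_2(g(s))$ (here I would absorb the additive constant, or note $M_2\ge 1$ so $M_{2+\gamma}\le M_2$ by Jensen), and also $\japangle{\Phi_g}^{2+\gamma}\le \japangle{\Phi_g}$ because $\japangle{\cdot}\ge 1$ and $2+\gamma\le 1$. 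Hence
\[
\frac{\rd}{\rd t}\japangle{\Phi_g(t,v)} \le C_\varepsilon \Big[\sup_{s\in[0,T]}M_2(g(s))\Big]\,\japangle{\Phi_g(t,v)},
\]
and Grönwall's inequality gives the stated exponential bound. The supremum $\sup_{s\in[0,T]}M_2(g(s))$ is finite because $g\in X_\gamma \subset C([0,T];\mathscr{P}_c(\R^d))$, a compact family of compactly supported measures, so its second moments are uniformly bounded on $[0,T]$.

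Finally, to conclude that $\Phi_g$ is global on $[0,T]$, I would argue by contradiction: if $T^*<T$ were the maximal time of existence, the a priori bound just derived shows $\japangle{\Phi_g(t,v)}$ stays bounded on $[0,T^*)$ for each fixed $v$, so $\Phi_g(t,v)$ remains in a fixed compact set; by \Cref{prop:ULipLp} the velocity field $U^\varepsilon[g(t)]$ is (locally) Lipschitz, uniformly on that compact set and on $[0,T]$, which contradicts the blow-up alternative of the ODE existence theorem. Hence $T^*\ge T$. I do not expect any serious obstacle here; the only mild subtlety is bookkeeping the $\gamma$-dependent exponent manipulations ($2+\gamma\le 1$ versus $\le 2$) and confirming the uniform moment bound for $g\in X_\gamma$, both of which are routine given the estimates already in hand.
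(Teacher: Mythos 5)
There is a genuine gap in your treatment of the case $\gamma\in(-2,0]$. You reduce everything to the crude velocity bound $|U^\varepsilon[g](v)|\lesssim_\varepsilon M_{2+\gamma}(g)\japangle{v}^{2+\gamma}$ from \Cref{prop:boundsKg} and then claim $\japangle{\Phi_g}^{2+\gamma}\le\japangle{\Phi_g}$ "because $2+\gamma\le 1$". But for $\gamma\in(-2,0]$ the exponent $2+\gamma$ ranges over $(0,2]$, and for $\gamma\in(-1,0]$ (in particular the endpoint $\gamma=0$) one has $2+\gamma>1$, so the inequality is false (recall $\japangle{\cdot}\ge 1$). What your argument actually yields there is the superlinear differential inequality $\frac{\rd}{\rd t}\japangle{\Phi_g}\lesssim_\varepsilon M_2(g)\,\japangle{\Phi_g}^{2+\gamma}$, whose comparison solutions blow up in finite time; it cannot produce the exponential bound stated in the lemma, nor the global extension on $[0,T]$. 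So the crude pointwise bound on $U^\varepsilon$ is simply not enough for moderately soft potentials close to $\gamma=0$.

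The paper avoids this by exploiting the structure of the kernel rather than its size: it differentiates $\tfrac12|\Phi_g(t,v)|^2$, so the relevant quantity is $\Phi_g\cdot\Pi[\Phi_g-w]B^\varepsilon\,|\Phi_g-w|^{2+\gamma}$ with $|B^\varepsilon|\lesssim_\varepsilon 1$, and then uses the cancellation $\Pi[v-w]v=\Pi[v-w]w$ (since $\Pi[v-w](v-w)=0$) together with $|\Phi_g-w|^{2}\le 2|\Phi_g|^2+2|w|^2$ on the region $|\Phi_g-w|\ge 1$. This trades the dangerous extra factor of $|\Phi_g|$ for moments of $g$, giving $\frac{\rd}{\rd t}\tfrac12\japangle{\Phi_g}^2\lesssim_\varepsilon M_2(g)\japangle{\Phi_g}^2$, i.e.\ a \emph{linear} inequality for $\japangle{\Phi_g}^2$, and Gr\"onwall then gives the exponential bound for all $\gamma\in(-2,0]$. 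Your argument for $\gamma\in(-3,-2]$ (uniform bound on $U^\varepsilon$ and direct integration) and your no-blow-up extension argument are fine and agree with the paper, and your approach would also work for $\gamma\in(-2,-1]$ where $2+\gamma\le 1$; but to cover $\gamma\in(-1,0]$ you need the projection cancellation (or some equivalent structural input), not just \Cref{prop:boundsKg}.
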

\begin{proof}
	We begin, for $\gamma\in(-2,0]$, by differentiating $\frac{1}{2}|\Phi_g(t,v)|^2$ with respect to $t \in (0,T^*)$. Expanding the definition of $U^\varepsilon$, we obtain
	\begin{align}
		\label{eq:secondmomODE}
		\begin{split}
			\quad &\frac{\rd}{\rd t}\frac{1}{2}|\Phi_g(t,v)|^2 = \Phi_g(t,v) \cdot U^\varepsilon[g](\Phi_g(t,v)) \\ 
			&= \Phi_g(t,v)\cdot \int_{\R^d}|\underbrace{\Phi_g(t,v)-w|^{2+\gamma} \Pi[\Phi_g(t,v) - w] B^\varepsilon (\Phi_g(t,v),w)}_{=:I}\, \rd g(w),
		\end{split}
	\end{align}
	where we have abbreviated 
	\[
	B^\varepsilon(v,w) := \nabla G^\varepsilon * \log [g*G^\varepsilon](v) - \nabla G^\varepsilon * \log [g*G^\varepsilon](w).
	\]
	Notice that $|B^\varepsilon|\lesssim_\varepsilon 1$ by~\eqref{eq:extlogdiffsgeq1}. Our goal is to show $\int I \, \rd g(w) \lesssim |\Phi_g|$ and then apply Gr\"onwall's inequality. First, we split the integral into regions where $|\Phi_g - w| \le 1$ and $|\Phi_g - w|> 1$. As $2+\gamma > 0$, the former piece can be easily estimated as follows
	\begin{align}
		\label{eq:secondmomsplit}
		\begin{split}
			\Phi_g \cdot \int_{\R^d} I \, \rd g(w) &= \Phi_g \cdot \left(
			\int_{|\Phi_g - w|\le 1} I \, \rd g(w) + \int_{|\Phi_g - w|> 1} I \, \rd g(w)
			\right)  \\
			&\lesssim_\varepsilon|\Phi_g| + \underbrace{\int_{|\Phi_g - w|> 1} |\Phi_g - w|^{2+\gamma}|\Pi[\Phi_g -w]\Phi_g| \, \rd g(w)}_{=:II}.
		\end{split}
	\end{align}
	Turning to $II$, we use the fact that $\Pi[v-w]v = \Pi[v-w]w$ and $|\Phi_g-w|^2\le 2|\Phi_g|^2+2|w|^2$ to estimate
	\begin{align*}
		II &\lesssim |\Phi_g|^2 \int_{|\Phi_g - w|> 1} |\Phi_g - w|^{\gamma}|\Pi[\Phi_g -w]w|\, \rd g(w) \\
		&\quad + \int_{|\Phi_g - w|> 1} |w|^2|\Phi_g - w|^{\gamma}|\Pi[\Phi_g -w]\Phi_g| \, \rd g(w).
	\end{align*}
	Finally, since $|\Phi_g - w|>1$ and $\gamma\le 0$, we can bluntly estimate the remaining contributions by
	\[
	II \le M_1(g)|\Phi_g|^2 + M_2(g) |\Phi_g|.
	\]
	Putting this together with~\eqref{eq:secondmomsplit} and~\eqref{eq:secondmomODE}, we have
	\[
	\frac{\rd}{\rd t}\frac{1}{2}\japangle{\Phi_g(t,v)}^2 \lesssim_\varepsilon M_2(g) \japangle{\Phi_g(t,v)}^2.
	\]
	Gr\"onwall's inequality gives the inequality for $\gamma\in(-2,0]$.
	
	Turning to the case $\gamma\in(-3,-2]$, we write the integral form of~\eqref{eq:contODE}
	\begin{align*}
		|\Phi_g(t,v) - v| = \left|
		\int_0^t U^\varepsilon[g](\Phi_g(s,v)) \,\rd s
		\right| \lesssim_\varepsilon t.
	\end{align*}
	The final estimate comes from applying~\Cref{prop:boundsKg}.
\end{proof}

\begin{proposition}
	\label{prop:Xcomplete}
	The space $(X_\gamma, \, d_\infty)$ for $\gamma\in(-3,0]$ is a complete metric space.
\end{proposition}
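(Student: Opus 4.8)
The plan is to present $(X_\gamma,d_\infty)$ as (a closed subset of) the space of continuous curves valued in $(\mathscr{P}_c(\R^d),W_\infty)$ under the uniform metric, and to reduce the claim to two facts: that $(\mathscr{P}_c(\R^d),W_\infty)$ is itself complete, and that $C([0,T];Y)$ with the uniform metric is complete whenever $(Y,\rho)$ is. The latter is the standard argument for uniform limits, so the content lies in the former.

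To prove completeness of $(\mathscr{P}_c(\R^d),W_\infty)$, let $(\mu_n)$ be $W_\infty$-Cauchy. First I would note that the supports are uniformly bounded: picking $N$ with $W_\infty(\mu_n,\mu_N)<1$ for $n\ge N$ forces $\operatorname{supp}\mu_n$ into the closed unit neighbourhood of $\operatorname{supp}\mu_N$, so together with the finitely many earlier terms every $\mu_n$ lies in $\mathscr{P}(\bar B_R)$ for some $R>0$. This family is tight, hence a subsequence converges weakly, $\mu_{n_k}\rightharpoonup\mu$, with $\operatorname{supp}\mu\subseteq\bar B_R$. The delicate point --- and the one genuinely non-routine step in the whole proof --- is to upgrade weak convergence to $W_\infty$-convergence, since $W_\infty$ metrises a strictly finer topology. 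For this, given $\delta>0$ pick $N_\delta$ with $W_\infty(\mu_n,\mu_m)\le\delta$ for $n,m\ge N_\delta$; for fixed $n\ge N_\delta$ there is a coupling $\pi_{n,m}$ of $(\mu_n,\mu_m)$ concentrated on the closed set $\{|x-y|\le\delta\}$. These couplings live on $\bar B_R\times\bar B_R$, hence are tight, so along a further subsequence $\pi_{n,n_k}\rightharpoonup\pi$, and $\pi$ is a coupling of $(\mu_n,\mu)$ still concentrated on $\{|x-y|\le\delta\}$ because that set is closed. Thus $W_\infty(\mu_n,\mu)\le\delta$ for all $n\ge N_\delta$, and a Cauchy sequence with a convergent subsequence converges, so $\mu_n\to\mu$ in $W_\infty$. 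Combined with the curve-space step --- pointwise $W_\infty$-limits exist by the above, the convergence is uniform in $t$ by the usual $\varepsilon/3$ estimate, and a uniform limit of continuous maps $[0,T]\to(\mathscr{P}_c(\R^d),W_\infty)$ is continuous --- this settles $\gamma\in[-2,0]$.

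For $\gamma\in(-3,-2)$ the same two steps yield a limit $f\in C([0,T];\mathscr{P}_c(\R^d))$ and it remains to recover $f\in L^\infty(0,T;L^p)$. Here reflexivity of $L^p$ for $1<p<\infty$ is the tool: along a $d_\infty$-Cauchy sequence whose $L^p$ norms are bounded by some $R$ --- which is the regime in which the proposition is used, namely on the closed $L^p$-ball on which the fixed-point map of \Cref{sec:existcts} acts --- a subsequence of $f^n(t)$ converges weakly in $L^p$ for a.e.\ $t$; since $W_\infty$-convergence already implies weak convergence of measures, that weak $L^p$-limit must coincide with $f(t)$, and weak lower semicontinuity of the norm gives $\|f(t)\|_{L^p}\le R$. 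Hence the limit lies in the same closed $L^p$-ball, which is therefore complete, completing the argument.
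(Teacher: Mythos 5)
Your proof is correct and takes essentially the same route as the paper, whose proof simply cites~\cite{RR98} for the completeness of $(\mathscr{P}_c(\R^d),W_\infty)$ and its metrization of weak convergence and then invokes lower semicontinuity of the $L^p$ norm; you merely supply the support-bound/tightness/coupling argument that the paper delegates to the reference, plus the standard uniform-limit step for curves. Your explicit restriction to the closed $L^p$-ball in the case $\gamma\in(-3,-2)$ is also the right reading of how the proposition is used: the fixed-point argument runs on $M_\gamma$, and the lower-semicontinuity (equivalently, weak $L^p$ compactness) step is exactly what makes that ball closed, hence complete, under $d_\infty$.
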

\begin{proof}
	This can be proven from the fact that $(\mathscr{P}_c(\R^d), W_\infty)$ is complete  and metrizes weak convergence~\cite{RR98}. Moreover, the $L^p$ norm is lower semi-continuous with respect to this topology.
\end{proof}


\subsection{Moment and $L^p$ propagation}
\label{sec:momprop}
In this section, we derive the moment and $L^p$ propagation estimates we will need for the fixed point argument to prove~\Cref{thm:existmf}.
\begin{proposition}
	\label{cor:energyprop}
	Fix $\varepsilon>0, \gamma\in(-3,0]$, and initial condition $f^0$ satisfying~\ref{A:cpctsupp} and~\ref{A:gamma} and $g\in X_\gamma$. The unique weak solution $f(t) = \Phi_g(t,\cdot)\#f^0$ to~\eqref{eq:contractmap} belongs to $C([0,T];\mathscr{P}_c(\R^d))$ with second moment growth estimate
	\[
	M_2(f(t)) \le M_2(f^0) \exp\left\{
	C_\varepsilon \left(\sup_{s\in[0,T]}M_2(g(s))\right)t
	\right\}, \quad \forall t \in [0,T].
	\]
	If, moreover, $f=g$ (i.e. $f$ solves~\eqref{eq:epslan}), then $M_2(f(t)) = M_2(f^0)$ for all $t\in[0,T]$.
\end{proposition}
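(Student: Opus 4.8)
The strategy is to read off everything from the representation $f(t)=\Phi_g(t,\cdot)\#f^0$ together with the flow estimates already in hand. First I would recall that, under \ref{A:cpctsupp} and \ref{A:gamma}, \Cref{prop:Pwellposed} gives that $f(t)=\Phi_g(t,\cdot)\#f^0$ is the unique weak solution to~\eqref{eq:contractmap} in $C([0,T];\mathscr{P}(\R^d))$, and \Cref{lem:growthPhi} guarantees that the characteristic flow $\Phi_g$ is a well-defined $C^1$ map on all of $[0,T]$. So the only new content is (i) upgrading $\mathscr{P}(\R^d)$ to $\mathscr{P}_c(\R^d)$ with continuity, (ii) the second-moment bound, and (iii) conservation when $f=g$.

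For (i), write $\mathrm{supp}(f^0)\subset B_R$. The pointwise bounds in \Cref{lem:growthPhi} show $\Phi_g(t,\cdot)$ sends $B_R$ into a fixed ball $B_{R'}$, with $R'=R'\!\left(R,T,\varepsilon,\sup_{[0,T]}M_2(g)\right)$, so $\mathrm{supp}(f(t))\subset\overline{\Phi_g(t,\mathrm{supp} f^0)}\subset B_{R'}$ for every $t\in[0,T]$; hence $f(t)\in\mathscr{P}_c(\R^d)$. Continuity in $W_\infty$ then follows because $\Phi_g(t,\cdot)$ transports $f^0$ onto $f(t)$, so that $W_\infty(f(t),f(s))\le\sup_{v\in B_R}|\Phi_g(t,v)-\Phi_g(s,v)|\le|t-s|\,\sup_{[0,T]\times B_{R'}}|U^\varepsilon[g]|$, and $U^\varepsilon[g]$ is bounded on $B_{R'}$ by \Cref{prop:boundsKg} (using $M_{2+\gamma}(g)\le M_2(g)<\infty$ when $\gamma\in(-2,0]$, and the global bound when $\gamma\in[-3,-2]$). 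This gives $f\in C([0,T];\mathscr{P}_c(\R^d))$, indeed Lipschitz in time.

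For (ii), I would simply integrate the pointwise bounds of \Cref{lem:growthPhi} against $f^0$. Since $M_2(f(t))=\int_{\R^d}\japangle{\Phi_g(t,v)}^2\,\rd f^0(v)$, in the case $\gamma\in(-2,0]$ one squares $\japangle{\Phi_g(t,v)}\le\japangle{v}\exp\{C_\varepsilon(\sup_{s}M_2(g(s)))t\}$ and pulls the now $v$-independent exponential out of the integral. In the case $\gamma\in(-3,-2]$ one uses $\japangle{\Phi_g(t,v)}\le\japangle{v}+C_\varepsilon t$ together with $\japangle{v}\ge 1$ to get $\japangle{\Phi_g(t,v)}^2\le\japangle{v}^2(1+C_\varepsilon t)^2\le\japangle{v}^2 e^{2C_\varepsilon t}$, and then absorbs this into the stated form using $\sup_s M_2(g(s))\ge 1$. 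In either case, after relabeling $C_\varepsilon$, one obtains $M_2(f(t))\le M_2(f^0)\exp\{C_\varepsilon(\sup_{s\in[0,T]}M_2(g(s)))t\}$.

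Finally, for (iii), assume $f=g$ and test the weak formulation against a cutoff $\phi_R\in C_c^\infty(\R^d)$ with $\phi_R(v)=|v|^2$ on the fixed ball $B_{R'}\supset\bigcup_{t\in[0,T]}\mathrm{supp}(f(t))$ produced in step (i). Then $\frac{\rd}{\rd t}\int_{\R^d}|v|^2\,\rd f_t=\int_{\R^d}\nabla\phi_R\cdot U^\varepsilon[f]\,\rd f_t=2\iint_{\R^{2d}}v\cdot K_f(v,w)\,\rd f_t(v)\,\rd f_t(w)$ (using $\nabla\phi_R=2v$ on the support); symmetrizing in $(v,w)$ and invoking the skew-symmetry $K_f(v,w)=-K_f(w,v)$ turns the integrand into $\tfrac12(v-w)\cdot K_f(v,w)$, which vanishes identically since $K_f(v,w)$ is a multiple of $\Pi[v-w]$ applied to a vector and $(v-w)^{\!\top}\Pi[v-w]=0$. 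Hence $\int|v|^2\,\rd f_t$ is constant, and since $M_2(f_t)=1+\int|v|^2\,\rd f_t$, we conclude $M_2(f(t))=M_2(f^0)$. The only delicate bookkeeping, and the main obstacle, is exactly step (i): one must know the supports of all $f(t)$, $t\in[0,T]$, lie in a single ball so that one fixed cutoff $\phi_R$ works simultaneously for all $t$ and all the $(v,w)$-integrals above are finite (for $\gamma\in(-3,-2)$ this finiteness also uses $f=g\in L^\infty(0,T;L^p)$ exactly as in the earlier proofs); everything else is a direct consequence of \Cref{lem:growthPhi} and the skew-symmetry of $K$.
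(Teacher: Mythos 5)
Your argument is correct, but for the second–moment bound it takes a genuinely different (Lagrangian) route from the paper's (Eulerian) one. The paper proves the growth estimate directly at the level of the weak formulation: it tests~\eqref{eq:contractmap} against $\japangle{v}^2\phi_R$ with a cutoff, splits the double integral over $|v-w|\le 1$ and $|v-w|\ge 1$, uses $\Pi[v-w]v=\Pi[v-w]w$ to reach $\frac{\rd}{\rd t}\int \japangle{v}^2\phi_R \,\rd f_t \lesssim_\varepsilon M_2(g)M_2(f)$, and then lets $R\to\infty$ (monotone convergence) and applies Gr\"onwall; for $\gamma\in(-3,-2)$ this even yields a slightly stronger (linear-in-time) bound before being weakened to the stated exponential form. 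You instead integrate the pointwise flow bounds of~\Cref{lem:growthPhi} against $f^0$ through the push-forward identity $M_2(f(t))=\int\japangle{\Phi_g(t,v)}^2\,\rd f^0(v)$, absorbing the harmless factor $2$ in the exponent and using $\sup_s M_2(g(s))\ge 1$ in the very soft case; this is shorter because the hard estimate has already been done pointwise in~\Cref{lem:growthPhi}, whereas the paper's route reproves it in integrated form but does not need the representation $f=\Phi_g\#f^0$ (which is anyway available from~\Cref{prop:Pwellposed}). Your treatment of compact support and $W_\infty$-Lipschitz continuity in time is a more detailed version of the paper's one-line remark, and your conservation argument for $f=g$ is the same symmetrization trick ($K_f(v,w)=-K_f(w,v)$ plus $(v-w)^{\top}\Pi[v-w]=0$) as in the paper, with the added care of using a compactly supported cutoff equal to $|v|^2$ on the common support, which is a clean way to justify the paper's direct use of $\japangle{v}^2$ as a test function. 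One minor remark: for $\gamma\in(-3,-2)$ the finiteness of the double integral in the conservation step does not actually require the $L^p$ bound, since~\Cref{prop:boundsKg} already shows $K_g$ is bounded on bounded sets ($3+\gamma>0$); your appeal to $f\in L^\infty(0,T;L^p)$ is superfluous but harmless.
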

\begin{remark}
	The same propagation result applies for higher order moments. The constant $C_\varepsilon$ grows linearly with the order of the moment.
\end{remark}
The following $L^p$ estimate can be derived directly from standard facts about solutions to the continuity equation which can be found, for example, in~\cite{G03,BLR11}. For completeness, we prove~\Cref{lem:fLpest} in~\Cref{sec:fLpest}.
\begin{lemma}
	\label{lem:fLpest}
	For fixed $\varepsilon>0, \, \gamma\in (-3,-2)$ and $g\in X_\gamma$, we have the following $L^p$ estimate for $f(t) = \Phi_g(t,\cdot)\# f^0$ where $f^0 \in L^p$ and $\Phi_g$ are as in~\eqref{eq:contractmap} and~\eqref{eq:contODE}, respectively.
	\[
	\|f(t)\|_{L^p}\le \|f^0\|_{L^p}\exp\left\{
	C_{\varepsilon,\, \gamma, \, d}\left(1+\mathrm{esssup}_{s\in [0,T]}\|g(s)\|_{L^p}\right)t
	\right\}, \quad \forall t \in [0,T].
	\]
	In particular, $f \in X_\gamma$.
\end{lemma}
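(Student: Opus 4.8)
The plan is to use the flow-map representation $f(t)=\Phi_g(t,\cdot)\#f^0$ from~\Cref{prop:Pwellposed} together with the change of variables formula for push-forward densities and Liouville's formula for the Jacobian of the characteristic flow. The key structural input is that for $\gamma\in(-3,-2)$ and $g\in X_\gamma$ — so that $g(s)\in L^p(\R^d)$ for a.e.\ $s$, with the exponent $p$ of~\ref{A:gamma} — \Cref{prop:ULipLp} furnishes the \emph{global} Lipschitz bound $\mathrm{Lip}\big(U^\varepsilon[g(s)]\big)\le C_{\varepsilon,\gamma,p,d}\big(1+\|g(s)\|_{L^p}\big)=:L(s)$. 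Consequently the flow $\Phi_g(t,\cdot)$ is a bi-Lipschitz homeomorphism of $\R^d$ for each $t\in[0,T]$, and $\|\nabla\cdot U^\varepsilon[g(s)]\|_{L^\infty}\le dL(s)$.

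First I would record the density of a push-forward: the density of $f(t)=\Phi_g(t,\cdot)\#f^0$ is $f^0\circ\Phi_g(t,\cdot)^{-1}$ multiplied by the Jacobian of $\Phi_g(t,\cdot)^{-1}$, and substituting $y=\Phi_g(t,x)$ in $\|f(t)\|_{L^p}^p$ collapses it to the identity
\[
\|f(t)\|_{L^p}^p=\int_{\R^d}|f^0(x)|^p\,\big|\det D_x\Phi_g(t,x)\big|^{1-p}\,\rd x.
\]
Next, Liouville's (Jacobi's) formula for the linear ODE satisfied by $D_x\Phi_g$ gives
\[
\det D_x\Phi_g(t,x)=\exp\Big\{\int_0^t(\nabla\cdot U^\varepsilon[g(s)])(\Phi_g(s,x))\,\rd s\Big\},
\]
so that, since $1-p<0$, uniformly in $x$,
\[
\big|\det D_x\Phi_g(t,x)\big|^{1-p}\le\exp\Big\{(p-1)\int_0^t\|\nabla\cdot U^\varepsilon[g(s)]\|_{L^\infty}\,\rd s\Big\}.
\]
Combining these, inserting $\|\nabla\cdot U^\varepsilon[g(s)]\|_{L^\infty}\le dL(s)$, taking $p$-th roots, and absorbing the fixed exponent $p$ into the constant yields precisely
\[
\|f(t)\|_{L^p}\le\|f^0\|_{L^p}\exp\Big\{C_{\varepsilon,\gamma,d}\big(1+\mathrm{esssup}_{s\in[0,T]}\|g(s)\|_{L^p}\big)t\Big\}.
\]
The membership $f\in C([0,T];\mathscr{P}_c(\R^d))$ is already supplied by~\Cref{prop:Pwellposed} and~\Cref{cor:energyprop}; together with the bound just derived this gives $f\in X_\gamma$.

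The only genuine obstacle is making Liouville's formula rigorous, since a priori $U^\varepsilon[g(s)]$ is only Lipschitz (not $C^1$) in $v$ when $\gamma\in(-3,-2)$. I would handle this in one of two standard ways. Either one shows $U^\varepsilon[g(s)]\in C^1(\R^d)$ directly: the differentiated form of~\Cref{lem:contourestimate} gives $|\nabla_v K_g(v,w)|\lesssim_\varepsilon|v-w|^{2+\gamma}$, which is $g(s)$-integrable precisely because $(2+\gamma)p'>-d$ by~\ref{A:gamma} (H\"older's inequality with $g(s)\in L^p$), so one may differentiate under the integral sign and the classical ODE flow theory applies verbatim. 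Or one mollifies $U^\varepsilon[g(s)]$, runs the smooth argument above, and passes to the limit using the uniform-in-$s$ Lipschitz bound, exactly as in the treatment of continuity equations with Lipschitz velocity fields found in~\cite{G03,BLR11}. Everything else is a routine Gr\"onwall-type estimate.
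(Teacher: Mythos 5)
Your proposal is correct and follows essentially the same route as the paper's own proof in~\Cref{sec:fLpest}: change of variables through the push-forward formula, Liouville's formula for $\det\nabla\Phi_g$, and the divergence bound $|\nabla_v\cdot U^\varepsilon[g](v)|\lesssim_\varepsilon 1+\|g\|_{L^p}$ obtained from $|\nabla_v K_g(v,w)|\lesssim_\varepsilon|v-w|^{2+\gamma}$ together with H\"older's inequality and $(2+\gamma)p'>-d$. Your extra care in justifying Liouville's formula for a merely Lipschitz field is a welcome refinement of the paper's appeal to~\cite{G03}, but it is not a different argument.
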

\begin{proof}[Proof of~\Cref{cor:energyprop}]
	We begin by writing the weak formulation of~\eqref{eq:contractmap} against test functions $\phi \in C_c^\infty(\R^d)$,
	\begin{align}
		\label{eq:wkcontract}
		\begin{split}
			&\quad \frac{\rd}{\rd t}\int_{\R^d} \phi(v) \, \rd f_t(v) = \int_{\R^d} \nabla \phi(v) \cdot U^\varepsilon[g(t)](v) \, \rd f_t(v) = \\
			&\iint_{\R^{2d}} |v-w|^{2+\gamma}\nabla \phi(v) \cdot \Pi[v-w]B^\varepsilon \, \rd f(v) \, \rd g(w).
		\end{split}
	\end{align}
	We will derive the desired estimate by making use of $\japangle{v}^2\phi_R$ as a test function in~\eqref{eq:wkcontract}, where 
	\[
	\phi_R(v) = \phi\left(\frac{v}{R}\right), \quad \phi(v) = \Phi(|v|) = \left\{
	\begin{array}{cc}
		1 	&|v|\le 1 	\\
		0 	&|v| > 2
	\end{array}
	\right., \quad \Phi \ge 0, \quad \Phi \in C_c^\infty(\R).
	\]
	Firstly, notice that $\nabla (\japangle{v}^2 \phi_R(v))$ is supported in $|v| \le 2R$ and takes the form
	\begin{equation}
		\label{eq:nablapoly}
		\nabla (\japangle{v}^2 \phi_R(v)) = \underbrace{\left(2\phi_R(v) 
			+ \frac{1}{R}\japangle{v}^2 \frac{\Phi'(|v|/R)}{|v|}
			\right)}_{=:P(v)}v.
	\end{equation}
	In particular, since $\frac{1}{R} \lesssim \frac{1}{|v|} \lesssim \frac{1}{\japangle{v}}$ for large $R\gg 1$, the bound for $\Phi'$ gives
	\begin{equation}
		\label{eq:nablapolyest}
		\nabla (\japangle{v}^2 \phi_R(v)) = P(v)v, \quad\text{with } |P(v)| \lesssim 1.
	\end{equation}
	We start with the easier case of $\gamma\in(-3,-2)$. By interpolating the estimates in~\eqref{eq:extlogdiffsgeq1}, the function $\nabla \log [g * G^\varepsilon]$ is H\"older continuous so we can deduce
	\[
	|v-w|^{2+\gamma}|B^\varepsilon| \lesssim_\varepsilon 1, \quad B^\varepsilon = \nabla G^\varepsilon * \log [g*G^\varepsilon](v) - \nabla G^\varepsilon * \log [g*G^\varepsilon](w).
	\]
	This greatly simplifies the double integral to the following
	\begin{align*}
		&\quad \left|\iint_{\R^{2d}}|v-w|^{2+\gamma}\nabla (\japangle{v}^2 \phi_R(v))\cdot \Pi[v-w]B^\varepsilon \, \rd f(v)\,\rd g(w)\right| 	\\
		&\le C_\varepsilon\iint_{\R^{2d}} \japangle{v} \, \rd f(v) \, \rd g(w) \le C_\varepsilon  M_{1}(f).
	\end{align*}
	This establishes (a stronger version of) the result for $\gamma\in(-3,-2)$. Turning to the case $\gamma\in[-2,0]$, we split the inner integral in $v$ of~\eqref{eq:wkcontract} into regions where $|v-w|\le 1$ and $|v-w|\ge 1$ obtaining a first reduction using~\eqref{eq:extlogdiffsgeq1} and~\eqref{eq:nablapolyest}
	\begin{align}
		\label{eq:noconvfirst}
		\begin{split} 
			&\quad \int_{w\in\R^d}\left(
			\int_{\{v \, : \,|v-w|\le 1\}} + \int_{\{v \, : \, |v-w|\ge 1\}}
			\right)|v-w|^{2+\gamma}P(v)v\cdot \Pi[v-w]B^\varepsilon \, \rd f(v) \, \rd g(w) \\
			&\lesssim_\varepsilon M_0(g) M_{1}(f) + \underbrace{\int_{w\in\R^d} \int_{\{v\, : \,|v-w|\ge 1\}}|v-w|^{2+\gamma}P(v)v\cdot \Pi[v-w]B^\varepsilon \, \rd f(v) \, \rd g(w)}_{=:I}.
		\end{split}
	\end{align}
	For the term $I$, we use the identity $\Pi[v-w]v = \Pi[v-w]w$,  $|B^\varepsilon|\lesssim_\varepsilon 1$, \eqref{eq:nablapolyest}, and Young's inequality (c.f. the proof of~\Cref{lem:growthPhi}) which give
	\begin{align*}
		|I| &\lesssim_\varepsilon  \int_{w\in\R^d} \int_{\{ v \, : \, |v-w|\ge 1\}} (|v|^2 + |w|^2)|v-w|^\gamma |\Pi[v-w]v|\, \rd f(v) \, \rd g(w) \\
		&\le  \int_{w\in\R^d} \int_{\{v \, : \, |v-w| \ge 1\}}|v|^2|\Pi[v-w]w| \, \rd f(v) \, \rd g(w) \\
		&\quad + \int_{w\in\R^d} |w|^2 \int_{\{ v \, : \,|v-w|\ge 1\}}|\Pi[v-w]v|\, \rd f(v) \, \rd g(w) \\
		&\le M_1(g) M_2(f) +M_2(g) M_{1}(f).
	\end{align*}
	Collecting this estimate with~\eqref{eq:noconvfirst}, we have
	\begin{align*}
		&\quad \left|\iint_{\R^{2d}}|v-w|^{2+\gamma}\nabla (\japangle{v}^2 \phi_R(v))\cdot \Pi[v-w]B^\varepsilon \, \rd f(v) \, \rd g(w)\right|  \\
		&\lesssim_\varepsilon (M_0(g)M_{1}(f) + M_1(g)M_2(f) + M_2(g) M_{1}(f)) \le M_2(g) M_2(f).
	\end{align*}
	Inserting this estimate into~\eqref{eq:wkcontract} and integrating in time, we get
	\begin{align*}
		\int_{\Rd} \japangle{v}^2 \phi_R(v) \, \rd f_t(v)\le \int_{\Rd} \japangle{v}^2 \phi_R(v) \, \rd f^0(v) + C_\varepsilon\int_0^t M_2(g(s))M_2(f(s))\rd s.
	\end{align*}
	By Monotone Convergence, passing to the limit $R\to \infty$ with Gr\"onwall's inequality gives the stated a priori estimate on the growth of the second moment of $f$.
	
	Concerning the statement that $f\in C([0,T];\mathscr{P}_c(\R^d))$, notice that $\Phi_g$ is bounded in $[0,T]$ when $g\in X_\gamma$ according to~\Cref{lem:growthPhi}. Moreover, since $f^0$ has compact support, $f$ has compact support as a push-forward of $f^0$ through a bounded flow map.
	
	Finally, in the case $f=g$, take $\phi(v) = \japangle{v}^2 = 1+|v|^2$ as a test function (justified by the previous estimates) so that the right-hand side of~\eqref{eq:wkcontract} reads
	\begin{align*}
		&\quad \frac{\rd}{\rd t} M_2(f(t)) = \frac{\rd}{\rd t}\int_{\Rd} (1+|v|^2) \, \rd f_t(v) =2\iint_{\R^d}|v-w|^{2+\gamma} v \cdot \Pi[v-w]B^\varepsilon \, \rd f_t(v) \, \rd f_t(w) 	\\	= &\iint_{\R^d} |v-w|^{2+\gamma}(v-w)\cdot \Pi[v-w]B^\varepsilon \, \rd f_t(v) \, \rd f_t(w) = 0.
	\end{align*}
	In plain words, $M_2(f(t))$ is constant.
\end{proof}

\subsection{Well-posedness of~\eqref{eq:epslan}}
\label{sec:wpmf}
In this section, we prove~\Cref{thm:existmf} by applying the contraction mapping theorem to the solution map of~\eqref{eq:contractmap} denoted by $S_\gamma : g\in X_\gamma \mapsto f(t) = \Phi_g(t,\cdot)\#f^0 \in X_\gamma$ (c.f.~\Cref{cor:energyprop} and~\Cref{lem:fLpest}).
We will leverage the propagation estimates from~\Cref{sec:momprop} to the following closed subspace of $X_\gamma$. For $T>0$, define
\[
M_\gamma = M_\gamma(T) := \left\{
\begin{array}{ll}
	\{f \in X_\gamma \,| \,  \sup_{t\in[0,T]}M_2(f(t)) \le 2M_2(f^0)\}, 	&\gamma\in[-2,0] 	\\
	\{f \in X_\gamma \,| \, \mathrm{esssup}_{t\in[0,T]}\|f(t)\|_{L^p} \le 3\|f^0\|_{L^p}\}, 	&\gamma\in(-3,-2)
\end{array}
\right..
\]
\begin{remark}
	\label{lem:expbdd}
	Fix $b>0$ and $k> a > 0$. For $T>0$, define the function
	\[
	F_T : [0,k]\to [0,+\infty), \quad F_T(x) = ae^{bTx}.
	\]
	Then, for every $T\le T_C := \frac{1}{bk}\log \frac{k}{a}$, it holds $F_T \le k$. In other words, $F_T :[0,k]\to [0,k].$
\end{remark}
Motivated by~\Cref{lem:expbdd}, we define the time horizon
\[
T_C := \left\{
\begin{array}{cl}
	\frac{\log 2}{2C_\varepsilon M_2(f^0)}, 	&\gamma\in[-2,0] 	\\
	\min \left(
	\frac{\log 2}{C_{\varepsilon, \gamma, d}}, \, \frac{\log \frac{3}{2}}{6 C_{\varepsilon, \gamma, d}\|f^0\|_{L^p}}
	\right), 	&\gamma\in(-3,-2)
\end{array}
\right.,
\]
where $C_\varepsilon, \, C_{\varepsilon, \gamma,d}>0$ are the constants appearing in the exponential in~\Cref{cor:energyprop} and~\Cref{lem:fLpest}, respectively. The plan of the following proof is to show that $S_\gamma$ is a contraction from $M_\gamma$ to itself.
\begin{proof}[Proof of~\Cref{thm:existmf}]
	For fixed $g \in M_\gamma$, we denote $f = S_\gamma g$ (i.e. $f(t) = \Phi_g(t,\cdot)\#f^0$). Let us first consider $\gamma\in[-2,0]$ and show $f \in M_\gamma$. The estimate of~\Cref{cor:energyprop} reads
	\[
	\sup_{s\in[0,T]}M_2(f(s))\le F_T\left(\sup_{s\in[0,T]}M_2(g(s))\right)
	\]
	where $F_T$ is the function from~\Cref{lem:expbdd} and the constants are $a = M_2(f^0),\, b = C_\varepsilon$. Moreover, we take the constant $k = 2 M_2(f^0) > a = M_2(f^0)$. This fulfills the criteria of~\Cref{lem:expbdd} so that for $T \le T_C =  \frac{\log 2}{2C_\varepsilon M_2(f^0)}$, we have 
	\[
	\sup_{s\in[0,T]}M_2(f(s)) \le 2M_2(f^0).
	\]
	This proves $f\in M_\gamma$ in the case $\gamma\in[-2,0]$. The case $\gamma\in(-3,-2)$ follows similarly from~\Cref{lem:fLpest} (which replaces~\Cref{cor:energyprop}) and~\Cref{lem:expbdd}.
	
	\ul{Existence and uniqueness:} We now prove that $S_\gamma$ is a contraction on $M_\gamma$ with respect to the metric $d_\infty$. We need to show that there is some universal constant $\kappa\in(0,1)$ such that for every $g^1,g^2\in M_\gamma$, we have
	\[
	d_\infty(S_\gamma g^1, \, S_\gamma g^2) = \sup_{t\in[0,T]}W_\infty(S_\gamma g^1(t), \, S_\gamma g^2(t))\le \kappa d_\infty(g^1, \,g^2).
	\]
	Let us denote $f^i = S_\gamma g^i$ for $i=1, \,2$ so that $f^i\in M_\gamma$ solves~\eqref{eq:contractmap} induced by $g^i$. Let us fix $t\in[0,T]$ and suppress the time dependence. We can use $(\Phi_{g^1}\times \Phi_{g^2})\# f^0$ as an admissible transport plan between $f^1$ and $f^2$ and the following estimate~\cite{S15}
	\begin{align}
		\label{eq:estWinfty}
		\begin{split}
			W_\infty(f^1,f^2) &= \lim_{p\to \infty} W_p(\Phi_{g^1}\#f^0, \Phi_{g^2}\# f^0) \le \lim_{p\to \infty}\left(
			\int_{\R^d}|\Phi_{g^1}(v) - \Phi_{g^2}(v)|^p \, \rd f^0(v)
			\right)^\frac{1}{p}.
		\end{split}
	\end{align}
	The crucial quantity to estimate is the difference between the flow maps. Notice that we are only concerned with the difference for $v\in \text{supp}f^0\subset B_R$ from~\ref{A:cpctsupp}. In particular, for $g^1, g^2 \in M_\gamma$ and $\gamma\in(-2,0]$, we will use the following growth estimate from~\Cref{lem:growthPhi}
	\begin{equation}
		\label{eq:growthPhi}
		\japangle{\Phi_{g^i}} \le \japangle{R}\exp\left\{
		C_\varepsilon M_2(f^0)T
		\right\}, \quad i=1,2.
	\end{equation}
	Using the fact that the flow maps satisfy~\eqref{eq:contODE}, we write again for fixed $t\in[0,T]$
	\begin{align}
		\label{eq:diffflow}
		\begin{split}
			\Phi_{g^1}(v) - \Phi_{g^2}(v) &= \int_0^t U^\varepsilon[g^1](\Phi_{g^1}(s,v)) - U^\varepsilon[g^2](\Phi_{g^2}(s,v))\rd s \\
			&= \int_0^t \underbrace{U^\varepsilon[g^1](\Phi_{g^1}(s,v)) - U^\varepsilon[g^1](\Phi_{g^2}(s,v))}_{=:I_1} \rd s \\
			&\quad + \int_0^t \underbrace{U^\varepsilon[g^1](\Phi_{g^2}(s,v)) - U^\varepsilon[g^2](\Phi_{g^2}(s,v))}_{=:I_2} \rd s.
		\end{split}
	\end{align}
	Starting with the difference $I_1$, we use the Lipschitz regularity of $U^\varepsilon$ from~\Cref{prop:ULipLp} to get
	\[
	|I_1| \le \Lambda_\gamma\left(g^1, \, \Phi_{g^1}(s,v), \, \Phi_{g^2}(s,v)\right) |\Phi_{g^1}(s,v) - \Phi_{g^2}(s,v)|.
	\]
	In particular, absorbing more constants into $C_\varepsilon$, \eqref{eq:growthPhi} gives the bound
	\[
	|\Lambda_\gamma| \le \left\{
	\begin{array}{cl}
		C_\varepsilon M_2(f^0) \japangle{R}^{2+\gamma}\exp\left(
		C_\varepsilon (2+\gamma) M_2(f^0)T
		\right), 	&\gamma\in[-2,0] \\
		C_{\varepsilon, \gamma, p', d}(1 + \|f^0\|_{L^p}), 	&\gamma\in(-3,-2)
	\end{array}
	\right..
	\]
	Turning to $I_2$, we need to estimate the measure-wise difference of the velocity fields $U^\varepsilon$. An application of~\Cref{prop:linest} and~\eqref{eq:growthPhi} gives
	\[
	|I_2| \lesssim_\varepsilon W_\infty(g^1(s), \, g^2(s)) \left\{
	\begin{array}{cl}
		\japangle{R}^{2+\gamma}M_2(f^0)\exp\left(C_\varepsilon (2+\gamma) M_2(f^0)T\right), 	&\gamma\in[-2,0] 	\\
		(1 + \|f^0\|_{L^p}), 	&\gamma\in(-3,-2)
	\end{array}
	\right..
	\]
	Inserting these estimates for $I_1$ and $I_2$ back into~\eqref{eq:diffflow}, we have
	\begin{align*}
		|\Phi_{g^1}(v) - \Phi_{g^2}(v)| &\lesssim_{\varepsilon, \, f^0, \, \gamma, \, T} \int_0^t |\Phi_{g^1}(s,v) - \Phi_{g^2}(s,v)| + W_\infty(g^1(s), g^2(s))\rd s.
	\end{align*}
	Denoting the constant on the right-hand side by $c = c(\varepsilon, f^0)$ (the dependence on $\gamma$ and $T$ is bounded), an application of Gr\"onwall's inequality gives
	\[
	|\Phi_{g^1}(v) - \Phi_{g^2}(v)| \le c \int_0^t e^{c(t-s)}W_\infty(g^1(s), g^2(s))\rd s.
	\]
	Notice that the previous estimate is independent of $v$ and $p>1$, hence when it is substituted into~\eqref{eq:estWinfty}, we obtain
	\begin{align*}
		W_\infty(f^1(t),f^2(t)) &\le c \int_0^t e^{c(t-s)}W_\infty(g^1(s),g^2(s))\rd s \le (e^{cT}-1)d_\infty (g^1,g^2).
	\end{align*}
	By reducing $T>0$ even further (depending only on $\varepsilon$ and $f^0$), we can ensure that the Lipschitz constant $e^{cT} - 1 =: \kappa < 1$. 
	
	\ul{Maximal time of existence:} Having finished with the short time existence and uniqueness of solutions to~\eqref{eq:epslan}, we turn to the statement concerning the maximal time of existence, $T_M$. The dichotomy for $\gamma\in(-3,-2)$ comes from the standard Cauchy-Lipschitz theory. For the case $\gamma\in[-2,0]$, we were able to construct solutions provided we could ensure
	\[
	\sup_{t\in[0,T_M]}M_2(f(t)) \le 2M_2(f^0).
	\]
	On the other hand, the last statement of~\Cref{cor:energyprop} shows that second moments are conserved by solutions of~\eqref{eq:epslan}; $
	M_2(f(t)) = M_2(f^0)$, for every $t\in[0,T_M].$ Thus, we can indefinitely repeat the contraction mapping argument and extend the solution globally to any finite time horizon.
\end{proof}

\section{The Mean Field limit}
\label{sec:mfl}
This section is dedicated to the proof of~\Cref{thm:CCH}. 
The initial computations for both cases $\gamma\in[-2,0]$ and $\gamma\in(-3,-2)$ are the same which we present now until they diverge. To fix notation, let $f$ and $\mu^N = \sum_{i=1}^N m_i \delta_{v^i(t)}$ denote the continuum and (any) empirical solution constructed from~\Cref{sec:existcts,sec:IPS}, respectively. $f^0$ denotes the initial data to $f$ satisfying~\ref{A:cpctsupp} and~\ref{A:gamma} while $\mu_0^N = \sum_{i=1}^N m_i \delta_{v_0^i}$ denotes the initial data of $\mu^N$ satisfying~\ref{B:initmf} and~\ref{B:initcond}. We define the following `discrete' flow
\begin{equation}
	\label{eq:discflow}
	\left\{
	\begin{array}{cl}
		\frac{\rd}{\rd t}F^N(t,s;v) &= U^\varepsilon[\mu^N](F^N(t,s;v)) \\
		F^N(s,s;v)    &= v\in \mathbb{R}^d
	\end{array}
	\right., \quad t, s \in [0,T^N],
\end{equation}
such that $\eta_m(t) = \min_{i\neq j}|v^i(t)-v^j(t)|>0$ for $t\in [0,T^N]$ together with the `continuous' flow
\begin{equation}
	\label{eq:ctsflow}
	\left\{
	\begin{array}{cl}
		\frac{\rd}{\rd t}F(t,s;v)     &= U^\varepsilon[f](F(t,s;v))  \\
		F(s,s;v)     &=v \in \mathbb{R}^d 
	\end{array}
	\right., \quad t, s \in [0,T_m].
\end{equation}
Notice that $T^N>0$ may be taken as \textit{any} arbitrary time horizon for $\gamma\in[-2,0]$ since $\mu^N$ is defined for all times and~\Cref{prop:nocollisionssoft} asserts that $\eta_m^N(t)>0$ for all times. The dependence on $N\in\mathbb{N}$ for the time horizon $T^N$ in~\eqref{eq:discflow} is only relevant for $\gamma\in(-3,-2)$ and this is investigated in the sequel. Here $T_m>0$ is the maximal time of existence of the continuum limit solving~\eqref{eq:epslan} as indicated in~\Cref{thm:existmf}. We may choose $T_m=+\infty$ for $\gamma\in[-2,0]$ while a priori it may be finite for $\gamma\in(-3,-2)$. From the discussion in the~\Cref{sec:existcts,sec:IPS}, the flows in~\eqref{eq:discflow} and~\eqref{eq:ctsflow} are well-defined. Fix $0 < t_0 < \min (T_m, \, T^N)$. Take $\tau^0$ an optimal transport map in $W_\infty$ between $f(t_0)$ and $\mu^N(t_0)$ i.e. $\mu^N(t_0) = \tau^0 \# f(t_0).$ From the construction of $f$ in~\Cref{sec:existcts}, we have that $f(t) = F(t, \,t_0; \, \cdot)\# f(t_0).$ Moreover, we also have $\mu^N(t) = F^N(t, \, t_0; \, \cdot) \# \mu^N(t_0)$. Using a composition of all these maps, we can define a candidate transport map to estimate the $W_\infty$ distance between $f(t)$ and $\mu^N(t)$ by
\[
\tau^t\#f(t) = \mu^N(t), \text{ where } \tau^t = F^N(t,\, t_0; \, \cdot)\, \circ\, \tau^0\, \circ \, F(t_0, \, t; \, \cdot).
\]
For any $1 \le p < \infty$, the $W_p$ Wasserstein distance can be estimated by
\[
W_p^p(\mu^N(t),f(t)) \le \int_{\R^d} |F(t, \, t_0; \, v) - F^N(t, \, t_0; \, \tau^0(v))|^p  \, \rd f_{t_0}(v).
\]
The limit $p = \infty$ is then given by
\[
\eta^N(t) = W_\infty(\mu^N(t),f(t)) \le \| F(t, \, t_0; \, \cdot) - F^N(t, \, t_0;\, \tau^0(\cdot))  \|_{L^\infty(f_{t_0})}.
\]
By definition of the flows defined in~\eqref{eq:discflow} and~\eqref{eq:ctsflow}, we have
\[
\left.\frac{\rd}{\rd t}\right|_{t=t_0^+} F^N(t, \, t_0;\, \tau^0(v)) - F(t, \, t_0; \, v) = U^\varepsilon[\mu^N(t_0)](\tau^0(v)) - U^\varepsilon[f(t_0)](v).
\]
Therefore, we can estimate
\begin{equation}
	\label{eq:etagrowth}
	\left.\frac{\rd}{\rd t}\right|_{t=t_0^+}\eta^N(t) \le \| U[\mu^N(t_0)](\tau^0(v)) - U[f(t_0)](v) \|_{L_v^\infty(f_{t_0})}.
\end{equation}
We expand the velocity difference in~\eqref{eq:etagrowth} using the fact that $\tau^0$ transports $f_{t_0}$ to $\mu_{t_0}^N$
\begin{align}
	\label{eq:diffvel}
	\begin{split}
		&\quad U^\varepsilon[\mu^N(t_0)](\tau^0(v)) - U^\varepsilon[f(t_0)](v) \\ &= \int_{\R^d} K_{\mu^N(t_0)}(\tau^0(v),w) \,d\mu_{t_0}^N(w) - \int_{\R^d}K_{f(t_0)}(v,w)\, \rd f_{t_0}(w) \\
		&= \int_{\R^d} (K_{\mu^N(t_0)}(\tau^0(v), \tau^0(w)) - K_{f(t_0)}(v,w)) \, \rd f_{t_0}(w) \\
		&= \int_{\R^d}\underbrace{\left\{
			K_{\mu_{t_0}^N}(\tau^0(v),\tau^0(w)) - K_{\mu_{t_0}^N}(\tau^0(v),w) + K_{\mu_{t_0}^N}(\tau^0(v),w) - K_{\mu_{t_0}^N}(v,w)
			\right\}}_{=:D_1} \, \rd f_{t_0}(w) \\
		&\quad + \int_{\R^d}\underbrace{\left\{
			K_{\mu_{t_0}^N}(v,w) - K_{f_{t_0}}(v,w)
			\right\}}_{=:D_2}\, \rd f_{t_0}(w).
	\end{split}
\end{align}
Starting with the $D_2$ term, we need only concern ourselves with bounded $v$ and $w$ in the integrations owing to~\Cref{lem:growthPhi}. In particular, the regions of integration can be restricted to
\begin{equation}
	\label{eq:bddvel}
	|v|,\, |w| \le \left\{
	\begin{array}{cl}
		R \exp\left(C_\varepsilon t_0\right), & \gamma\in(-2,0] 	\\
		R + C_\varepsilon t_0, 	&\gamma\in(-3,-2)
	\end{array}
	\right..
\end{equation}
By~\Cref{lem:diffKg,lem:peetre}, we can estimate the $D_2$ term by
\begin{align}
	\label{eq:D2modsoft}
	\begin{split}
		&\quad \int_{\R^d} |D_2| \, \rd f_{t_0}(w) \lesssim_\varepsilon W_\infty(\mu_{t_0}^N, f_{t_0})\int \min\left(|v-w|^{3+\gamma}, \, |v-w|^{2+\gamma}\right) \, \rd f_{t_0}(w) 	\\
		&\le W_\infty(\mu_{t_0}^N, f_{t_0})\times \left\{
		\begin{array}{cl}
			R^{2+\gamma}\exp\left(C_\varepsilon(2+\gamma) t_0\right) M_{2+\gamma}(f_{t_0}),	&\gamma\in(-2,0] 	\\
			1, 	&\gamma\in(-3,-2)
		\end{array}
		\right..
	\end{split}
\end{align}
As for the $D_1$ term, we can complete the proof of~\Cref{thm:CCH} for $\gamma\in(-2,0]$.
\begin{proof}[Proof of~\Cref{thm:CCH} for moderately soft potentials]
	Building on the previous discussion, we only need to estimate the $D_1$ term. Using~\Cref{lem:contourestimate} twice, we get
	\begin{align*}
		&\quad \int_{\R^d} |D_1| \, \rd f_{t_0}(w) \\
		&\lesssim_\varepsilon \int_{B_{R\exp\left(C_\varepsilon t_0\right)}} |\tau^0(w) - w|\min\left(
		|\tau^0(v) - \tau^0(w)|^{2+\gamma}, |\tau^0(v) - w|^{2+\gamma}
		\right) \, \rd f_{t_0}(w) \\
		&\quad +  |\tau^0(v) - v| \int_{B_{R\exp\left(C_\varepsilon t_0\right)}} \min\left(
		|\tau^0(v) - w|^{2+\gamma}, |v - w|^{2+\gamma}
		\right) \, \rd f_{t_0}(w).
	\end{align*}
	The assumptions~\ref{B:initmf} and~\ref{A:cpctsupp} say $W_\infty(\mu_0^N, f_0)\to 0$ and supp$f_0\subset B_R$, thus for sufficiently large $N\gg 1$, we must have supp$\mu_0^N\subset B_{R+1}$. Moreover, $\tau^0$ pushes forward $f_{t_0}$ to $\mu_{t_0}^N$, so we obtain Im$\tau^0\subset B_{(R+1)\exp(C_\varepsilon t_0)}$ from~\eqref{eq:bddvel}. Hence, we can bluntly estimate the minimum terms using~\Cref{lem:peetre} to obtain
	\begin{align*}
		&\quad \int_{\R^d} |D_1| \, \rd f_{t_0}(w) 	\\
		&\le R^{2+\gamma} \exp(C_\varepsilon(2+\gamma)t_0) \left(\int |\tau^0(w) - w|\, \rd f_{t_0}(w) + |\tau^0(v) - v|\right) \\
		&\le 2 R^{2+\gamma} \exp(C_\varepsilon(2+\gamma)t_0) W_\infty(\mu_{t_0}^N, f_{t_0}). 
	\end{align*}
	Collecting this and~\eqref{eq:D2modsoft}, plugging them into~\eqref{eq:diffvel} and then~\eqref{eq:etagrowth}, we arrive at
	\[
	\left.\frac{\rd}{\rd t}\right|_{t=t_0^+} \eta^N(t) \lesssim_\varepsilon R^{2+\gamma}e^{C_\varepsilon (2+\gamma) t_0}\eta^N(t_0).
	\]
	As $t_0 \in(0,T)$ was chosen arbitrarily, a direct application of Gr\"onwall's inequality gives
	\[
	W_\infty(\mu^N(t), f(t)) \le W_\infty(\mu_0^N, f^0)\exp \left\{
	C_\varepsilon^1 R^{2+\gamma} e^{C_\varepsilon (2+\gamma) T}t
	\right\}, \quad \forall t\in[0,T].
	\]
	This implies the mean field limit for $\gamma\in(-2,0]$.
\end{proof}

\begin{proof}[Proof of~\Cref{thm:CCH} for very soft potentials]
	For $\gamma\in(-3,-2)$, the same method to estimate the $D_1$ term in~\eqref{eq:diffvel} does not work. Moreover, the construction of the particle solutions $\mu^N$ is only local in time up to some time horizon $T^N>0$ (c.f.~\Cref{sec:IPS}) which may be strictly less than $T_m$ and may also degenerate to 0 as $N\to+\infty$. We overcome these issues to show the mean field limit by repeating the inter-particle distance analysis from~\cite{CCH14}. The general steps from~\cite{CCH14} to show $\eta^N(t) \to 0$ are to couple the evolutions of $\eta^N$ and $\eta_m^N$ together in the following way where we recall $\eta_m^N$ denotes the inter-minimum particle distance for the particles in $\mu^N$
	\[
	\eta_m^N(t) := \min_{i\neq j}|v^i(t) - v^j(t)|.
	\]
	\begin{enumerate}
		\item We show first in~\Cref{sec:step1} the growth estimate of $\eta$ coupled with $\eta_m$
		\begin{equation}
			\label{eq:growtheta}
			\frac{\rd}{\rd t}\eta^N \lesssim_{\varepsilon,\gamma} \eta^N (1+\| f\|_{L^p})\left(
			1 + \left(\eta^N\right)^\frac{d}{p'}\left(\eta_m^N\right)^{1+\gamma}
			\right).
		\end{equation}
		\item Then in~\Cref{sec:step2} we obtain the decay estimate of $\eta_m$ coupled with $\eta$ 
		\begin{equation}
			\label{eq:decayetam}
			\frac{\rd}{\rd t}\eta_m^N \gtrsim_{\varepsilon,\gamma}-\eta_m^N(1+\| f\|_{L^p})\left(
			1 + \left(\eta^N\right)^\frac{d}{p'}\left(\eta_m^N\right)^{1+\gamma}
			\right).
		\end{equation}
		\item \label{step3} The coupled system~\eqref{eq:growtheta} and~\eqref{eq:decayetam} together with~\ref{B:initcond} allow us to deduce both $\liminf_{N\to\infty}T^N\ge T_m$ and $\eta^N(t) \to 0$ for all times $t\in[0,T_m)$. This is performed in~\Cref{sec:step3} based on the argument in~\cite{CCH14}. 
	\end{enumerate}
\end{proof}
Since the velocity vector field $U^\varepsilon$ depends on the solution, we cannot directly repeat the arguments from~\cite{CCH14} to establish~\eqref{eq:growtheta} and~\eqref{eq:decayetam}. Nevertheless, once these estimates are proven, step~\ref{step3} follows exactly as in~\cite{CCH14} which we leave to~\Cref{sec:step3} for completeness.
\subsection{Step 1}
\label{sec:step1}
We focus on the $D_1$ term from~\eqref{eq:diffvel} recalling that~\eqref{eq:D2modsoft} implies
\[
\int_{\R^d}|D_2|\, \rd f_{t_0}(w) \lesssim_\varepsilon \eta^N (t_0).
\]

\begin{proposition}
	\label{prop:estd1int}
	For fixed $\varepsilon>0$ and $\gamma\in(-3,-2)$, we have the estimate
	\[
	\int_{\R^d}|D_1| \, \rd f_{t_0}(w) \lesssim_{\varepsilon,\gamma,p'} \eta^N (1 + \| f_{t_0}\|_{L^p})(1 + \left(\eta^N\right)^\frac{d}{p'}\left(\eta_m^N\right)^{1+\gamma}).
	\]
\end{proposition}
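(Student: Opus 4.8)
The plan is to dissect the term $D_1 = [K_{\mu_{t_0}^N}(\tau^0(v),\tau^0(w)) - K_{\mu_{t_0}^N}(\tau^0(v),w)] + [K_{\mu_{t_0}^N}(\tau^0(v),w) - K_{\mu_{t_0}^N}(v,w)]$ into its two bracketed pieces, call them $D_{1,a}$ and $D_{1,b}$, and control each using the pointwise difference estimate~\Cref{lem:contourestimate}. Since $\|\tau^0(v)-v\|_{L^\infty(f_{t_0})} = W_\infty(\mu^N_{t_0},f_{t_0}) = \eta^N$ by definition of the optimal $W_\infty$ transport map, \Cref{lem:contourestimate} gives
\[
|D_{1,a}| \lesssim_{\varepsilon,\gamma} \eta^N \max(|\tau^0(v)-\tau^0(w)|^{2+\gamma}, |\tau^0(v)-w|^{2+\gamma}),
\quad
|D_{1,b}| \lesssim_{\varepsilon,\gamma} \eta^N \max(|\tau^0(v)-w|^{2+\gamma}, |v-w|^{2+\gamma}).
\]
The essential difficulty, and the reason the soft-potential case is genuinely harder than $\gamma\in(-2,0]$, is that $2+\gamma<0$ here, so these negative powers blow up when the relevant points nearly coincide; one cannot simply bound them by a constant as was done for moderately soft potentials. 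The point of the inter-particle distance $\eta_m^N$ is precisely to give a floor for how close the pushed-forward points $\tau^0(w)$ (which lie among the particle locations $v^i(t_0)$) can be to each other, but one must still handle the singularity when $\tau^0(v)$ or $v$ is close to a single particle location $w$.

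After integrating in $w$ against $f_{t_0}$, the core quantity to estimate is $\int_{\R^d} \max(|\tau^0(v)-w|^{2+\gamma},|v-w|^{2+\gamma})\,\rd f_{t_0}(w)$ together with the companion term involving $|\tau^0(v)-\tau^0(w)|^{2+\gamma}$. For the first, split the integration region by whether $|v-w|$ (resp.\ $|\tau^0(v)-w|$) is $\le 1$ or $>1$: on the far region the power is bounded by $1$, contributing $\lesssim 1$, and on the near region apply H\"older with exponents $p,p'$ exactly as in the proof of~\Cref{prop:ULipLp}, using $\|f_{t_0}\|_{L^p}$ and the integrability $(2+\gamma)p'>-d$ from~\ref{A:gamma} to get a bound $\lesssim_{\gamma,p',d}(1+\|f_{t_0}\|_{L^p})$. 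This produces the factor $\eta^N(1+\|f_{t_0}\|_{L^p})$, accounting for the ``$1+$'' inside the final parenthesis. For the term $\int |\tau^0(v)-\tau^0(w)|^{2+\gamma}\,\rd f_{t_0}(w)$, change variables via $\tau^0\# f_{t_0}=\mu^N_{t_0}$ to rewrite it as $\int|\tau^0(v)-z|^{2+\gamma}\,\rd\mu^N_{t_0}(z) = \sum_j m_j |\tau^0(v)-v^j(t_0)|^{2+\gamma}$; here $\tau^0(v)$ itself is some particle location (or within $\eta^N$ of one), so split off the possibly-coincident particle $v^{j_0}$ nearest $\tau^0(v)$, bounding its contribution crudely, while for the remaining particles use $|\tau^0(v)-v^j|\ge$ roughly $\eta_m^N$ by the definition of the minimum inter-particle distance. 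A covering/counting argument on how many particles of total mass one can sit in dyadic annuli of radius $\sim 2^k\eta_m^N$ around $\tau^0(v)$ — there are at most $\sim (\eta^N/\eta_m^N)^d$ of them within the support of diameter $\sim\eta^N$ — yields the factor $(\eta^N)^{d/p'}(\eta_m^N)^{1+\gamma}$ after optimizing, which is exactly the remaining term in the claimed bound.

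I expect the counting/covering estimate producing $(\eta^N)^{d/p'}(\eta_m^N)^{1+\gamma}$ to be the main obstacle: one must carefully track that the diameter of the relevant particle cloud is controlled by $\eta^N$ (using that $\text{supp}\,f_{t_0}$ and hence $\text{Im}\,\tau^0$ has bounded diameter, combined with how many particles fall near the singular point), that the number of particles in a ball of radius $r$ about $\tau^0(v)$ is at most $C(r/\eta_m^N)^d$, and that summing $\sum_j m_j|\tau^0(v)-v^j|^{2+\gamma}$ over dyadic shells with $2+\gamma+d>0$ is dominated by the outermost shell of radius $\sim\eta^N$ while the exponent bookkeeping matches $d/p'$ — this requires the precise relation $\frac{1}{p}+\frac{1}{p'}=1$ and the constraint on $p$ from~\ref{A:gamma}, mirroring the argument of~\cite{CCH14} but now with the solution-dependent kernel handled through~\Cref{lem:contourestimate}. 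The remaining steps (the H\"older splitting, the bounded-support reductions, collecting constants) are routine once the singular counting piece is in place.
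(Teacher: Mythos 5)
Your opening step coincides with the paper's: apply \Cref{lem:contourestimate} (together with the skew-symmetry of $K$) to both halves of $D_1$ and pull out $|\tau^0(w)-w|,\,|\tau^0(v)-v|\le\eta^N$, and your global H\"older treatment of $D_{1,b}$ and of the $|\tau^0(v)-w|^{2+\gamma}$ contribution is fine. The gap is in the piece $\int|\tau^0(v)-\tau^0(w)|^{2+\gamma}\,\rd f_{t_0}(w)$, on which you hang the whole factor $(\eta^N)^{d/p'}(\eta_m^N)^{1+\gamma}$, and there are two concrete problems. First, the set $\{w:\tau^0(w)=\tau^0(v)\}$ has positive $f_{t_0}$-mass (it is the preimage of a particle), and on it $|\tau^0(v)-\tau^0(w)|^{2+\gamma}=+\infty$, so the contour-estimate bound you start from is vacuous there; "bounding the coincident particle crudely" cannot be done at the level of $\sum_j m_j|\tau^0(v)-v^j|^{2+\gamma}$. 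One must abandon the cancellation on that set and return to the kernel itself, using that $K_{\mu^N}(x,x)=0$ (a consequence of the H\"older regularity of $\nabla G^\varepsilon*\log[\mu^N*G^\varepsilon]$ and $3+\gamma>0$, via \Cref{prop:boundsKg}); this is precisely why the paper does \emph{not} use \Cref{lem:contourestimate} on the near region $|v-w|<4\eta^N$ and instead estimates the two kernels separately there. Second, the dyadic counting mechanism cannot produce the claimed exponents: the particle cloud has diameter of order the support of $f_{t_0}$ (i.e.\ $O(R)$), not $O(\eta^N)$ as you assert, and a packing bound on the \emph{number} of $\eta_m^N$-separated particles (of order $(r/\eta_m^N)^d$) is blind to $p'$; summing (number of particles)$\times\max_j m_j\times(2^k\eta_m^N)^{2+\gamma}$ over dyadic shells produces an $(\eta_m^N)^{-d}$-type loss, not $(\eta_m^N)^{1+\gamma}$. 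The exponent $d/p'$ comes from a different mechanism entirely: H\"older's inequality bounding the $f_{t_0}$-mass of a ball of radius $O(\eta^N)$ by $\|f_{t_0}\|_{L^p}(\eta^N)^{d/p'}$ (and $\int_{|v-w|<4\eta^N}|v-w|^{2+\gamma}\rd f_{t_0}\lesssim\|f_{t_0}\|_{L^p}(\eta^N)^{d/p'+2+\gamma}$), while $\eta_m^N$ enters only through the single lower bound $|\tau^0(v)-\tau^0(w)|\ge\eta_m^N$ valid when the two images are distinct; on the far region $|v-w|\ge4\eta^N$ the triangle inequality gives $|\tau^0(v)-\tau^0(w)|\ge|v-w|/2$, $|\tau^0(v)-w|\ge3|v-w|/4$, and one is back to the integrable power $|v-w|^{2+\gamma}$. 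No counting or dyadic summation is needed, and none would give $d/p'$.

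A further omission: to state the result with $(\eta_m^N)^{1+\gamma}$ you need the comparison $\eta_m^N\le 2\eta^N$, which is not automatic; the paper proves it by tracking where $\tau^0$ sends the midpoint of the two closest particles (after modifying $\tau^0$ off $\mathrm{supp}\,f_{t_0}$ at no cost). Without it, the near-region terms come out as powers of $\eta^N$ such as $(\eta^N)^{d/p'+2+\gamma}$ and cannot be converted into the stated form. So while your skeleton (contour estimate plus an $L^p$--H\"older near/far splitting) is the right one, the singular core of the argument is delegated to a counting heuristic that would fail, and the coincidence case $\tau^0(w)=\tau^0(v)$ is unresolved.
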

Substituting these estimates for $D_1$ and $D_2$ into~\eqref{eq:diffvel} and then~\eqref{eq:etagrowth} gives~\eqref{eq:growtheta} completing the first step.
\begin{proof}
	Using~\eqref{lem:contourestimate}, we obtain
	\begin{align*}
		|D_1| &\lesssim_{\varepsilon,\gamma}|\tau^0(w) - w|\max\left(
		|\tau^0(v) - w|^{2+\gamma}, \, |\tau^0(v) - \tau^0(w)|^{2+\gamma}
		\right) \\
		&\quad + |\tau^0(v) - v| \max \left(
		|\tau^0(v) - w|^{2+\gamma}, \, |v-w|^{2+\gamma}
		\right).
	\end{align*}
	\ul{Integration region $|v-w| \ge 4\eta^N$:} We first deduce
	\[
	|\tau^0(v) - \tau^0(w)| \ge |v-w| - |\tau^0(v) - v| - |\tau^0(w) - w| \ge |v-w| - 2 \eta^N \ge \frac{|v-w|}{2}.
	\]
	The second inequality is obtained by remembering $\tau^0$ is an optimal transport map in $W_\infty$ between $f(t_0)$ and $\mu^N(t_0)$. Similarly, we have the estimate
	\[
	|\tau^0(v) - w| \ge |v-w| - |\tau^0(v) - v| \ge |v-w| - \eta^N \ge \frac{3|v-w|}{4}.
	\]
	Overall, these estimates lead to
	\[
	|D_1| \lesssim_{\varepsilon,\gamma}\eta^N |v-w|^{2+\gamma}
	\]
	and integrating over $\{w\in \R^d \, | \, |v-w|\ge 4\eta^N\}$ yields
	\begin{align*}
		\int_{|v-w|\ge 4\eta^N}|D_1|\, \rd f_{t_0}(w) &\lesssim_{\varepsilon,\gamma}\eta^N \left(
		\int_{4\eta^N \le |v-w| \le 1} + \int_{|v-w| > 1}
		\right)|v-w|^{2+\gamma}\, \rd f_{t_0}(w) \\
		&\lesssim_{p',\gamma}\eta^N (\|f_{t_0}\|_{L^p} + 1).
	\end{align*}
	\ul{Integration region $|v-w| < 4\eta^N$:} Here, we do not use the cancellations in
	\[
	D_1 = K_{\mu^N}(\tau^0(v),\tau^0(w)) - K_{\mu^N}(v,w),
	\]
	instead, we estimate each term using~\Cref{prop:boundsKg}. Since Im$\tau^0\subset \{v^i\}_{i=1}^N$, if $\tau^0(v) = \tau^0(w)$, then the H\"older regularity of $\log [\mu^N*G^\varepsilon]$ (after interpolating the estimates in~\eqref{eq:extlogdiffsgeq1}) gives $K_{\mu^N}(\tau^0(v),\tau^0(w)) = 0$. Otherwise, we use $|\tau^0(v) - \tau^0(w)| \ge \eta_m$ and~\Cref{prop:boundsKg} to deduce
	\[
	|D_1| \lesssim_\varepsilon \left(\eta_m^N\right)^{2+\gamma}+|v-w|^{2+\gamma} .
	\]
	By H\"older's inequality, the integral can be estimated by
	\begin{align*}
		\int_{|v-w|< 4\eta^N}|D_1| \, \rd f_{t_0}(w) &\lesssim_\varepsilon \int_{|v-w|< 4\eta^N} |v-w|^{2+\gamma}\, \rd f_{t_0}(w) + \left(\eta_m^N\right)^{2+\gamma}\int_{|v-w|< 4\eta^N}\, \rd f_{t_0}(w) \\
		&\le \|f_{t_0}\|_{L^p}\left(
		\left(
		\int_{|v-w|< 4\eta^N} |v-w|^{(2+\gamma)p'}\, \rd w
		\right)^\frac{1}{p'} + \left(
		\int_{|v-w|< 4\eta^N} 1\, \rd w
		\right)^\frac{1}{p'} \left(\eta_m^N\right)^{2+\gamma}
		\right) \\
		&\lesssim_{\gamma,p'} \|f_{t_0}\|_{L^p}\left(\left(\eta^N\right)^{\frac{d}{p'}+ 2+ \gamma} + \left(\eta_m^N\right)^{2+\gamma}\left(\eta^N\right)^\frac{d}{p'}
		\right).
	\end{align*}
	Finally, choose two indices $i, \, j$ such that $\eta_m^N = |v^i - v^j|$. We seek to estimate $\eta_m$ against $\eta$ by looking at where $\tau^0$ sends the midpoint $\frac{v^i+v^j}{2}$. In the case that $\frac{v^i+v^j}{2} \notin \text{supp}f$, then we can define $\tau^0$ to be whatever we want as it does not affect the $W_\infty$ distance. In particular, we can assign $\tau^0(v) \in \{v^i\}_{i=1}^N$ for every $v\notin$ supp$f^0$ without changing the transport cost. Suppose
	\[
	\tau^0\left(
	\frac{v^i+v^j}{2}
	\right) = v^k \in \{v^i\}_{i=1}^N.
	\]
	Without loss of generality $v_k\ne v_i$, and we have the following lower bound
	\begin{align*}
		&\quad \eta^N \ge \left|
		\tau^0\left(
		\frac{v^i+v^j}{2}
		\right) - \frac{v^i+v^j}{2}
		\right| = \left|v^k - \frac{v^i+v^j}{2}\right| = \left|v^k - v^i + \frac{v^i-v^j}{2}\right| \\
		&\ge |v^k-v^i| - \frac{1}{2}|v^i-v^j| \ge \frac{1}{2}\eta_m^N.
	\end{align*}
	This implies $\eta_m^N \le 2\eta^N$ which simplifies
	\[
	\int_{|v-w|< 4\eta^N}|D_1| \, \rd f_{t_0}(w) \lesssim_{\varepsilon,\gamma,p'} \left(\eta^N\right)^{\frac{d}{p'}+1}\left(\eta_m^N\right)^{1+\gamma}\|f_{t_0}\|_{L^p}.
	\]
\end{proof}
\subsection{Step 2}
\label{sec:step2}
Having derived an upper bound for the growth of $\eta^N$ coupled with $\eta_m^N$, we need to find a corresponding lower bound for the decrease of $\eta_m^N$ coupled with $\eta^N$ to close the system.
\begin{proposition}
	The minimum inter-particle distance satisfies the lower bound for its decay
	\[
	\frac{\rd}{\rd t}\eta_m^N \gtrsim_{\varepsilon,\gamma}- \eta_m^N(1+ \|f \|_{L^p})(1 + \left(\eta^N\right)^\frac{d}{p'}\left(\eta_m^N\right)^{1+\gamma}).
	\]
\end{proposition}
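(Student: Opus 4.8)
The plan is to run the evolution of $\eta_m^N$ in parallel with the $D_1$ estimate of~\Cref{prop:estd1int}; the one genuinely new point is that the singular self–interaction between the two closest particles is annihilated by the projection structure of $K$. Fix a time $t_0$ in the common interval of existence at which $\eta_m^N(t_0)>0$ and choose indices $i\neq j$ realising the minimum, so $|v^i-v^j|=\eta_m^N(t_0)$. Since $t\mapsto (\eta_m^N)^2(t)=\min_{k\neq\ell}|v^k(t)-v^\ell(t)|^2$ is a minimum of finitely many $C^1$ functions it is locally Lipschitz, and at a.e.\ $t$ its derivative equals that of $|v^i-v^j|^2$ for a minimising pair; this is all that is needed to feed Grönwall's inequality in Step~3. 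Using the particle system~\eqref{eq:epsparticle} we thus reduce to estimating
\[
\eta_m^N\,\frac{\rd}{\rd t}\eta_m^N=\tfrac12\frac{\rd}{\rd t}|v^i-v^j|^2=(v^i-v^j)\cdot\bigl(\dot v^i-\dot v^j\bigr).
\]

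Expanding $\dot v^i-\dot v^j=\sum_{k}m_k\bigl(K_{\mu^N}(v^i,v^k)-K_{\mu^N}(v^j,v^k)\bigr)$, the $k=i$ term contains $K_{\mu^N}(v^i,v^i)=0$ and the $k=j$ term contains $K_{\mu^N}(v^j,v^j)=0$, so the only diagonal contribution that survives is $(m_i+m_j)K_{\mu^N}(v^i,v^j)$. But $K_{\mu^N}(v^i,v^j)$ is a multiple of $\Pi[v^i-v^j]\,(\cdots)$, hence orthogonal to $v^i-v^j$, and it drops out after taking the inner product with $v^i-v^j$. We are left with
\[
\eta_m^N\,\frac{\rd}{\rd t}\eta_m^N=(v^i-v^j)\cdot\sum_{k\neq i,j}m_k\bigl(K_{\mu^N}(v^i,v^k)-K_{\mu^N}(v^j,v^k)\bigr),
\]
which is exactly of the type controlled in Step~1.

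To finish I would bound this by $\eta_m^N\sum_{k\neq i,j}m_k|K_{\mu^N}(v^i,v^k)-K_{\mu^N}(v^j,v^k)|$, apply~\Cref{lem:contourestimate} with $|v^i-v^j|=\eta_m^N$ to bound each summand by $\lesssim_{\varepsilon,\gamma}\eta_m^N\max(|v^i-v^k|^{2+\gamma},|v^j-v^k|^{2+\gamma})$, and then estimate $\sum_k m_k|v^i-v^k|^{2+\gamma}=\int_{\R^d}|v^i-w|^{2+\gamma}\rd\mu^N(w)$ (and the analogous sum at $v^j$). For this last integral I would use the $W_\infty$–optimal map $\tau^0$ from $f(t_0)$ to $\mu^N(t_0)$, so $\mu^N(t_0)=\tau^0\#f(t_0)$ with $\|\tau^0-\mathrm{id}\|_{L^\infty(f)}=\eta^N(t_0)$, and split $\R^d$ into $\{|v^i-y|\ge 2\eta^N\}$ and $\{|v^i-y|<2\eta^N\}$: on the far set $|v^i-\tau^0(y)|\ge\tfrac12|v^i-y|$ and H\"older's inequality with $f\in L^p$ and $(2+\gamma)p'>-d$ from~\ref{A:gamma} give a contribution $\lesssim_{\gamma,p',d}1+\|f\|_{L^p}$; on the near set the image $\tau^0(y)$ is a particle distinct from $v^i$, so $|v^i-\tau^0(y)|\ge\eta_m^N$, and H\"older gives $\lesssim_d(\eta^N)^{d/p'}(\eta_m^N)^{2+\gamma}\|f\|_{L^p}$. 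Collecting these and the analogue at $v^j$ yields
\[
\left|\frac{\rd}{\rd t}\eta_m^N\right|\lesssim_{\varepsilon,\gamma,p',d}\eta_m^N(1+\|f\|_{L^p})\bigl(1+(\eta^N)^{d/p'}(\eta_m^N)^{2+\gamma}\bigr),
\]
and since $\eta_m^N\le 2\eta^N$ (shown in Step~1) with $\eta^N$ bounded on $[0,T]$, one replaces $(\eta_m^N)^{2+\gamma}$ by $(\eta_m^N)^{1+\gamma}$ — using $(\eta_m^N)^{2+\gamma}\le(\eta_m^N)^{1+\gamma}$ where $\eta_m^N\le 1$ and crude boundedness where $\eta_m^N\ge1$ — to obtain~\eqref{eq:decayetam}.

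The main obstacle is precisely the passage from the atomic sum $\sum_k m_k|v^i-v^k|^{2+\gamma}$ to the $L^p$ norm of the \emph{continuum} solution: because the kernel here is built from $\mu^N$ itself rather than from a fixed field as in~\cite{CCH14}, one cannot invoke a Lipschitz bound on $U^\varepsilon$ for the (non-$L^p$) measure $\mu^N$, and the transport-map comparison with $f$ together with the tight tracking of the near-diagonal term in powers of $\eta^N$ and $\eta_m^N$ is what makes the coupled system~\eqref{eq:growtheta}--\eqref{eq:decayetam} close. A secondary, routine point is the non-smoothness of $t\mapsto\eta_m^N$, handled by working with the a.e.\ (or one-sided) derivative of $(\eta_m^N)^2$ and a minimising pair.
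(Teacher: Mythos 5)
Your argument is correct and follows essentially the same route as the paper's proof: pick a minimising pair, compare the two velocities through the kernel-difference estimate of~\Cref{lem:contourestimate}, transport $\mu^N$ back to $f$ with the $W_\infty$-optimal map, split at scale $\eta^N$, and use H\"older's inequality with~\ref{A:gamma} on the far region and the lower bound $|v^i-\tau^0(w)|\ge \eta_m^N$ on the near region. The only deviations are cosmetic: the paper bounds the direct $i$--$j$ interaction bluntly by $(\eta_m^N)^{2+\gamma}$ inside the near-set term (rather than cancelling it via the orthogonality of $\Pi[v^i-v^j]$ as you do), and on the near set it estimates the two kernels separately instead of their difference, so the factorisation $(\eta_m^N)^{2+\gamma}=\eta_m^N(\eta_m^N)^{1+\gamma}$ gives the stated exponent directly, without your final weakening step (which, as you note, only makes your intermediate bound stronger).
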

\begin{proof}
	Choose two indices $i, \, j =1,\dots, N$ such that $|v^i - v^j| = \eta_m^N$ where we will suppress time dependence for simplicity. We have
	\begin{align*}
		\frac{\rd}{\rd t}|v^i - v^j| &\ge -|U^\varepsilon[\mu^N](v^i) - U^\varepsilon[\mu^N](v^j)| \\
		&\ge - \int_{\R^3}|K_{\mu^N}(v^i,w) - K_{\mu^N}(v^j,w)| \,\rd \mu^N(w) \\
		&= - \int_{\R^3} |K_{\mu^N}(v^i,\tau(w)) - K_{\mu^N}(v^j,\tau(w))|\, \rd f(w).
	\end{align*}
	Here, we have set $\tau$ as an optimal transfer map in $W_\infty$ such that $\mu^N(t) = \tau\# f(t)$ for $t \in [0,\min(T,T^N))$. We split the integration into the following domains
	\[
	\mathcal{A} = \{ w \, : \, \min(|v^i-w|,\, |v^j-w|)\ge 2\eta^N\}, \quad \mathcal{B} = \R^d \setminus \mathcal{A}.
	\]
	Starting with $\mathcal{A}$, we use the inequality
	\[
	|v^i - \tau(w)| \ge |v^i-w| - |w - \tau(w)| \ge |v^i - w| - \eta^N \ge \frac{|v^i-w|}{2}
	\]
	and~\Cref{lem:contourestimate} to deduce
	\begin{align*}
		&\quad \int_{\mathcal{A}}|K_{\mu^N}(v^i,\tau(w)) - K_{\mu^N}(v^j,\tau(w))|\, \rd f(w) \\ &\lesssim_{\varepsilon,\gamma}|v^i - v^j| \int_{\mathcal{A}}\max \left(
		|v^i - \tau(w)|^{2+\gamma}, |v^j - \tau(w)|^{2+\gamma}
		\right)\, \rd f(w) \\
		&\le 2^{-(2+\gamma)}|v^i-v^j| \int_{\mathcal{A}}(|v^i - w|^{2+\gamma} + |v^j - w|^{2+\gamma})\, \rd f(w) \\
		&\lesssim_{\gamma}\eta_m^N (1 + \|f\|_{L^p}).
	\end{align*}
	In the last line, we have bluntly estimated
	\begin{align*}
		\int_{\mathcal{A}}|v^i - w|^{2+\gamma}\, \rd f(w) &\le \int_{\R^d}|v^i - w|^{2+\gamma}\, \rd f(w) \\ &\le \int_{|v^i - w| \ge 1} \, \rd f(w) + \int_{|v^i - w| <1 }|v^i - w|^{2+\gamma}\, \rd f(w).
	\end{align*}
	with the usual H\"older's inequality for the second term and similarly for $v^j$.
	
	Turning to the region $\mathcal{B}$, since Im$\tau \subset \{v^i\}_{i=1}^N$, as soon as $v^i \neq \tau(w),$ we must have
	\[
	|v^i - \tau(w)| \ge \eta_m^N,
	\]
	with a similar estimate for $v^j$. By further blunting the $L^\infty$ estimate in~\Cref{prop:boundsKg}, we obtain
	\begin{align*}
		|K_{\mu^N}(v^i,\tau(w))| &\lesssim_\varepsilon|v^i-\tau(w)|^{2+\gamma} \le \left(\eta_m^N\right)^{2+\gamma}.
	\end{align*}
	If $v^i=\tau(w)$, then the H\"older regularity of $\nabla \log [\mu^N*G^\varepsilon]$ from~\eqref{eq:extlogdiffsgeq1} gives $K_{\mu^N}(v^i,\tau(w)) = 0$. The familiar method using H\"older's inequality gives
	\[
	\int_{\mathcal{B}}\, \rd f(w) \le \left(
	\int_{\mathcal{B}}\, \rd w
	\right)^\frac{1}{p'}\|f\|_{L^p} \lesssim \left(\eta^N\right)^\frac{d}{p'}\|f\|_{L^p}.
	\]
	Putting these two estimates together, we treat the full integral over $\mathcal{B}$ by
	\begin{align*}
		\int_{\mathcal{B}}|K_{\mu^N}(v^i,\tau(w)) - K_{\mu^N}(v^j,\tau(w))|\, \rd f(w) &\lesssim \left(\eta_m^N\right)^{2+\gamma} \int_{\mathcal{B}}\, \rd f(w) \lesssim \left(\eta^N\right)^\frac{d}{p'}\left(\eta_m^N\right)^{2+\gamma} \|f\|_{L^p}.
	\end{align*}
	Finally, we add up the integrals over $\mathcal{A}$ and $\mathcal{B}$ to get
	\begin{align*}
		\int_{\R^d}|K_{\mu^N}(v^i,\tau(w)) - K_{\mu^N}(v^j,\tau(w))|\, \rd f(w) &\lesssim_{\varepsilon,\gamma} \eta_m^N(1+ \|f \|_{L^p})(1 + \left(\eta^N\right)^\frac{d}{p'}\left(\eta_m^N\right)^{1+\gamma}).
	\end{align*}
\end{proof}

\section*{Acknowledgements}
JAC was supported the Advanced Grant Nonlocal-CPD (Nonlocal PDEs for Complex Particle Dynamics: 	Phase Transitions, Patterns and Synchronization) of the European Research Council Executive Agency (ERC) under the European Union's Horizon 2020 research and innovation programme (grant agreement No. 883363). JAC was also partially supported by the EPSRC grant numbers EP/T022132/1 and EP/V051121/1. MGD was partially supported by NSF-DMS-2205937 and NSF-DMS RTG 1840314. JW was supported by the the Mathematical Institute Award of the University of Oxford. The authors would like to thank the Isaac Newton Institute for Mathematical Sciences, Cambridge, for support and hospitality during the programme \textit{Frontiers in Kinetic Theory} where work on this paper was undertaken. This work was supported by EPSRC grant no EP/R014604/1.

\begin{appendices}
	\section{Proof of~\Cref{lem:contourestimate}}
	\label{sec:contourproof}
	The structure of our kernel is more general than those considered in~\cite{CCH14}, but the idea is the same and we provide the details for completeness.
	
	\ul{The case $\gamma\in[-2,0]$:} The fundamental theorem of calculus with~\Cref{prop:boundsKg} give
	\begin{align*}
		|K_g(v_1,w) - K_g(v_2,w)| &= \left|
		\int_0^1 \frac{\rd}{\rd t}K_g(tv_1 + (1-t)v_2,w)\rd t
		\right| \\
		&\lesssim_\varepsilon |v_1-v_2|\int_0^1 |tv_1 + (1-t)v_2 - w|^{2+\gamma}\rd t.
	\end{align*}
	Up to a constant depending on $\gamma$, the integrand can be estimated by
	\begin{align*}
		|tv_1 + (1-t)v_2 - w|^{2+\gamma} &\lesssim_\gamma |v_1 - w|^{2+\gamma} + |v_2 - w|^{2+\gamma} \\ &\lesssim \max\left(|v_1-w|^{2+\gamma},|v_2-w|^{2+\gamma}\right).
	\end{align*}
	\ul{The case $\gamma\in[-3,-2)$:} Set $\Gamma(t) = (1-t)v_1 + tv_2 -w$ and we separate into further cases.
	
	\ul{Case 1 - For every $t\in[0,1]$, we have $|\Gamma(t)| \ge \frac{1}{4}\min(|v_1-w|, |v_2 - w|)$:} We can repeat the previous computations almost exactly and recover the desired estimate.
	
	\ul{Case 2 - There is a $t\in[0,1]$ such that $|\Gamma(t)| < \frac{1}{4}\min(|v_1-w|, |v_2 - w|)$:} We need to perturb the original contour $\Gamma$ to avoid the possible singularity. Notice that we can find $t\in(0,1)$ such that $|\Gamma(t)| < \frac{1}{4}\min(|v_1-w|, |v_2 - w|)$. We first take (the unique) $t_m\in(0,1)$ such that
	\[
	|\Gamma(t_m)| = \min_{t\in [0,1]}|\Gamma(t)|.
	\]
	Next, define the other two time points where $|\Gamma(t)| = \frac{1}{4}\min(|v_1-w|, |v_2-w|),$
	\begin{align*}
		t_i &:= \inf\left\{
		t \in [0,1] \, : \, |\Gamma(t)| = \frac{1}{4}\min(|v_1-w|, |v_2-w|)
		\right\}, \\
		t_s &:= \sup\left\{
		t \in [0,1] \, : \, |\Gamma(t)| = \frac{1}{4}\min(|v_1-w|, |v_2-w|)
		\right\}.
	\end{align*}
	By continuity of $|\Gamma(t)|$, we have that all $t_m, \, t_i, \, t_s \in (0,1).$ The triangle formed by connecting the vectors $\Gamma(t_i), \, \Gamma(t_s) - \Gamma(t_i),$ and $\Gamma(t_s)$ is \textit{isosceles} so the following quantity is well-defined (see~\Cref{fig:contour})
	\[
	r := |\Gamma(t_i) - \Gamma(t_m)| = |\Gamma(t_m) - \Gamma(t_s)|.
	\]
	\begin{figure}[H]
		\centering
		\caption{Simplistic visual perturbation of $\Gamma(t)$ to avoid the singularity.}
		\label{fig:contour}
		\begin{tikzpicture}
			\draw[very thin, opacity = 0.5, gray, help lines] (-5,-5) grid (5,5);
			\draw[thick] (-5,0) -- (5,0);
			\filldraw[black] (-5,0) circle (1.5pt) node[left] {$v_1-w$};
			\filldraw[black] (5,0) circle (1.5pt) node[right] {$v_2-w$};
			\filldraw[black] (0,-4) circle (1.5pt) node[below] {0};
			\filldraw[black] (0,0) circle (1.5pt) node[above] {$\Gamma(t_m)$};
			\draw[dashed, blue, <->] (0,-4) -- (0,0) node[pos = 0.5, right] {$\min |\Gamma|$};
			\filldraw[black] (3,0) circle (1.5pt) node[below] {$\Gamma(t_s)$};
			\filldraw[black] (-3,0) circle (1.5pt) node[below] {$\Gamma(t_i)$};
			\draw[YellowGreen] (0:3cm) arc (0:180:3cm);
			\draw[dashed, ->, Rhodamine] (0,-4) -- (-3,0) node[pos = 0.5, below, left] {$\frac{1}{4}\min (|v_1-w|, |v_2-w|)$};
			\draw[dashed, ->, Rhodamine] (0,-4) -- (3,0) node[pos = 0.5, below, right] {$\frac{1}{4}\min (|v_1-w|, |v_2-w|)$};
			\draw[<->, Orchid, dashed] (-3,3.5) -- (0,3.5) node[pos = 0.5, above] {$r =|\Gamma(t_i) - \Gamma(t_m)|$};
			\draw[->, Sepia] (0,0) -- (135:3cm) node[pos=1, above, left] {$A(\theta)$};
			\draw[->, Sepia] (180:1cm) arc (180:135:1cm) node[pos=0.5, left] {$\theta$}; 
		\end{tikzpicture}
	\end{figure}
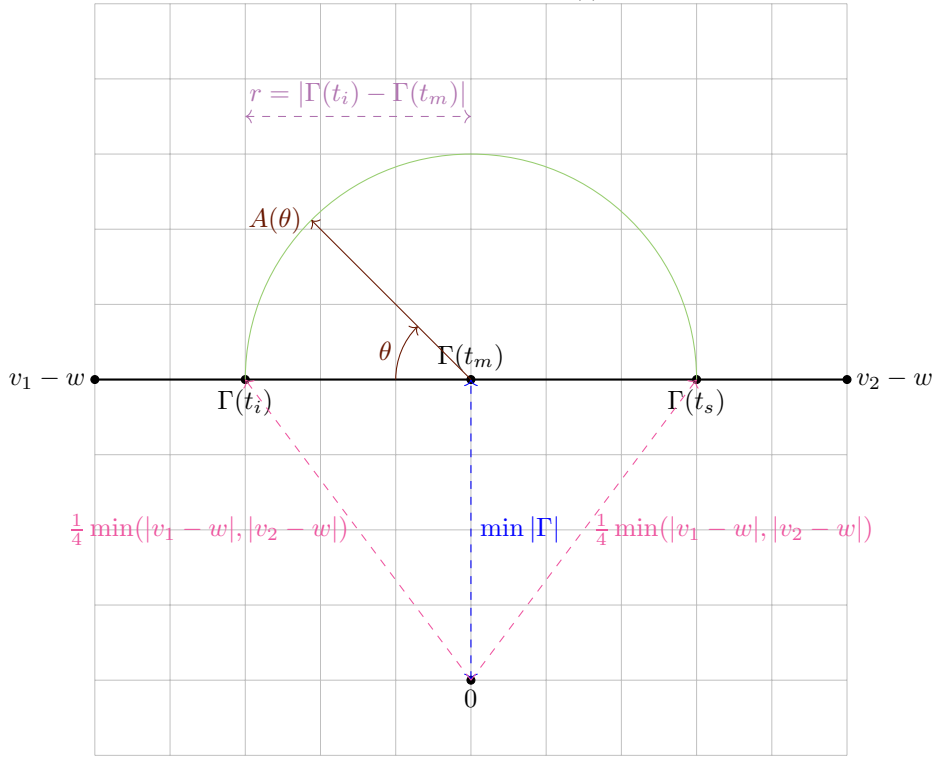
	We wish to apply the fundamental theorem by taking the contour connecting $v_1-w$ to $v_2-w$ that traces a semicircular arc from $\Gamma(t_i)$ to $\Gamma(t_s)$ in the direction furthest from the origin (the \textcolor{YellowGreen}{green} arc in~\Cref{fig:contour}). More precisely, the direction furthest away from the origin is defined as
	\[
	e := \frac{\Gamma(t_m)}{|\Gamma(t_m)|},
	\]
	or if $\Gamma(t_m) = 0$, take any $e\in \mathbb{S}^{d-1}$. For a given angle $\theta \in [0,\pi]$, the \textcolor{YellowGreen}{green} arc can be parameterised by
	\[
	A(\theta) := \Gamma(t_m) + r\left(\cos\theta \frac{\Gamma(t_i) - \Gamma(t_m)}{r} + \sin \theta \, e\right).
	\]
	Observe that by the (reverse) triangle inequality and the fact that $e \perp \Gamma(t_i) - \Gamma(t_m)$, we have the lower bound for all $\theta \in [0,\pi]$
	\begin{equation}
		\label{eq:Alowbdd}
		|A(\theta)| \ge |\Gamma(t_m)| - r = |\Gamma(t_m)| - |\Gamma(t_i) - \Gamma(t_m)| \ge |\Gamma(t_i)| = \frac{1}{4}\min (|v_1-w|,|v_2-w|).
	\end{equation}
	Putting these pieces together, we define the perturbed contour $\tilde{\Gamma} : [0, \,t_i + \pi + 1-t_s] \to \R^d$ by
	\[
	\tilde{\Gamma}(t) := \left\{
	\begin{array}{cl}
		\Gamma(t),     &t\in [0, \,t_i]  \\
		A(t - t_i),     &t\in [t_i, \, t_i+\pi]  \\
		\Gamma(t - t_i - \pi + t_s),  &t\in[t_i+\pi, \, t_i + \pi + 1-t_s]
	\end{array}
	\right..
	\]
	We will apply the fundamental theorem of calculus on each of the three pieces of $\tilde{\Gamma}$ to estimate the difference
	\begin{align}
		\label{eq:diffcont}
		\begin{split}
			|K_g(v_1,w) - K_g(v_2,w)| &\le |K_g(\tilde{\Gamma}(0) + w, w) - K_g(\tilde{\Gamma}(t_i) + w,w)| \\
			&\quad + |K_g(\tilde{\Gamma}(t_i) + w, w) - K_g(\tilde{\Gamma}(t_i + \pi) + w, w)| \\
			&\quad + |K_g(\tilde{\Gamma}(t_i + \pi)+w,w) - K_g(\tilde{\Gamma}(t_i+\pi + 1-t_s) + w,w)| \\
			&\le \int_0^{t_i}\left|
			\frac{\rd}{\rd t}K_g(\tilde{\Gamma}(t) + w,w)
			\right|\, \rd w \\
			&\quad +\int_{t_i}^{t_i+\pi}\left|
			\frac{\rd}{\rd t}K_g(\tilde{\Gamma}(t) + w,w)
			\right|\, \rd w \\
			&\quad +\int_{t_i+\pi}^{t_i+\pi+1-t_s}\left|
			\frac{\rd}{\rd t}K_g(\tilde{\Gamma}(t) + w,w)
			\right|\, \rd w \\
			&=: T_1 + T_2 + T_3.
		\end{split}
	\end{align}
	Starting with $T_1$, the chain rule gives
	\begin{align*}
		T_1 &\le |v_1-v_2|\int_0^{t_i} |\nabla_v K_g(\tilde{\Gamma}(t) + w,w)|\, \rd t.
	\end{align*}
	Using the derivative estimate in~\Cref{prop:boundsKg} and the fact that $|\Gamma(t)| \ge \frac{1}{4}\min(|v_1-w|,|v_2-w|)$ for $t\in [0,t_i]$, we obtain
	\begin{align}
		\label{eq:estT1cont}
		\begin{split}
			T_1 &\lesssim_\varepsilon |v_1-v_2|\int_0^{t_i} |\tilde{\Gamma}(t)|^{2+\gamma}dt \le |v_1-v_2| \int_0^{t_i}\max(|v_1-w|^{2+\gamma}, |v_2-w|^{2+\gamma})dt \\
			&\le t_i |v_1-v_2| \max(|v_1-w|^{2+\gamma}, |v_2-w|^{2+\gamma}).
		\end{split}
	\end{align}
	Similarly for $T_3$, we have
	\begin{equation}
		\label{eq:estT2cont}
		T_3 \lesssim_\varepsilon (1-t_s) |v_1-v_2| \max(|v_1-w|^{2+\gamma}, |v_2-w|^{2+\gamma}).
	\end{equation}
	We now turn to $T_2$, we substitute $A(t-t_i)$ into this piece and use the derivative estimate from~\Cref{prop:boundsKg} with the chain rule to get
	\[
	T_2 \lesssim_\varepsilon \int_{t_i}^{t_i+\pi} \left|\frac{\rd}{\rd t}A(t-t_i)
	\right||A(t-t_i)|^{2+\gamma}\rd t.
	\]
	Recalling the definitions of $r$ (this is the length of a particular segment of $[v_1-w,v_2-w]$) and $A$ together with the lower bound~\eqref{eq:Alowbdd}, we use
	\[
	\left|\frac{\rd}{\rd t}A\right| = r \le |v_1-v_2| \quad \text{and} \quad |A| \ge \frac{1}{4}\min(|v_1-w|,|v_2-w|)
	\]
	so that we have
	\[
	T_2 \lesssim_\varepsilon \pi |v_1-v_2| \max(|v_1-w|^{2+\gamma}, |v_2-w|^{2+\gamma}).
	\]
	Putting this inequality with~\eqref{eq:estT2cont} and \eqref{eq:estT1cont} into~\eqref{eq:diffcont}, we achieve the desired result.
	\section{Proof of~\Cref{lem:fLpest}}
	\label{sec:fLpest}
	By our abuse of notation from interchanging probability measures with their densities, we write down the explicit formula for $f(t,v)$ as a density
	\begin{equation}
		\label{eq:fcov}
		f(t,v) = \frac{f^0(\Phi_g^{-1}(t,v))}{|\text{det}(\nabla \Phi_g(t,\Phi_g^{-1}(t,v)))|}.
	\end{equation}
	Here, the inverse $\Phi_g^{-1}$ should be thought of as the `reverse' flow map to $\Phi_g$ where the direction of time has been reversed. Changing variables with~\eqref{eq:fcov}, we have
	\begin{equation}
		\label{eq:fLp}
		\quad \int |f(t,v)|^p\, \rd v = \int \frac{|f^0(v)|^p}{|\text{det}(\nabla \Phi_g(t,v))|^{p-1}}\, \rd v.
	\end{equation}
	We turn to estimating the denominator in the integrand of~\eqref{eq:fLp}. Again, standard facts about the flow map $\Phi_g$ from~\cite{G03} give the following formula
	\begin{equation*}
		\text{det}\nabla \Phi_g(t,v) = \exp\left\{
		\int_0^t \nabla_v\cdot U^\varepsilon[g](\Phi_g(s,v))\, \rd s
		\right\}.
	\end{equation*}
	From an application of the Dominated Convergence Theorem and~\Cref{prop:boundsKg}, we have
	\begin{align*}
		&\quad |\nabla_v\cdot U^\varepsilon[g](v)| \le \int |\nabla_v\cdot K_g(v,w)| \, \rd g(w) \\
		&\lesssim_\varepsilon \int |v-w|^{2+\gamma}\, \rd g(w) \le 1 + C_{\gamma, d}\|g\|_{L^p}.
	\end{align*}
	The last computation is obtained by the usual method of splitting the integration region between $|v-w| < 1$ and $|v-w| \ge 1$ recalling $2+\gamma < 0$. Inserting this inequality into~\eqref{eq:fLp}, we obtain the desired estimate
	\[
	\int |f(t,v)|^p \rd v \le \left(\int |f^0(v)|^p \rd v\right) \exp \left\{
	C_{\varepsilon, \gamma, d}(p-1)  \left(
	1 + \mathrm{esssup}_{s\in[0,T]}\|g(s)\|_{L^p}
	\right)t
	\right\}.
	\]
	Finally, \Cref{cor:energyprop} already proved $f\in C([0,T];\mathscr{P}_c(\R^d))$ and the $L_t^\infty L_v^p$ property is clear from the estimate we have just proved.
	\section{The interacting particle system}
	\label{sec:IPS}
	This section is concerned with proving~\Cref{lem:existparticle}; the well-posedness of the particle system described in~\eqref{eq:epsparticle}. Throughout this section, the number $N\in\mathbb{N}$ of particles is fixed as well as the positive weights $\{m_i\}_{i=1}^N$ and initial points $\{v_0^i\}_{i=1}^N$. We denote the initial empirical data by $\mu_0^N = \sum_{i=1}^N m_{i,N}\delta_{v_0^i}$. We can apply the same arguments from~\Cref{sec:mfl} for $\gamma\in(-2,0]$.
	\begin{proof}[Proof of~\Cref{lem:existparticle} for $-2< \gamma\le 0$]
		The initial empirical data $\mu_0^N$ satisfies~\ref{A:cpctsupp} with radius of support $R_N := \max_{i=1,\dots,N}|v_0^i|$. Applying~\Cref{thm:existmf} for any $T>0$, we have the unique solution $\mu^N(t)\in X_\gamma(T)$. Moreover, \Cref{prop:Pwellposed} says that $\mu^N(t)$ can be represented as
		\[
		\mu^N(t) = \Phi_{\mu^N}(t,\cdot)\#\mu_0^N,
		\]
		where $\Phi_{\mu^N}$ is the (unique!) flow map in~\eqref{eq:contODE} induced by the curve $\mu^N$. Since $\mu^N$ is the push-forward of $\mu_0^N$, it is also an empirical measure with the form
		\[
		\mu^N(t) = \sum_{i=1}^N m_{i,N}\delta_{\Phi_{\mu^N}(t,v_0^{i,N})}.
		\]
		Moreover, for every $i=1,\dots, N$, $\Phi_{\mu^N}(t,v_0^{i,N})$ solves precisely~\eqref{eq:epsparticle}. 
	\end{proof}
	The following proposition gives a lower bound on the minimum inter-particle distance
	\[
	\eta_m^N(t) = \min_{i,j=1,\dots,N} |v^i(t) - v^j(t)|.
	\]
	\begin{proposition}[No collisions in finite time]
		\label{prop:nocollisionssoft}
		Fix $\varepsilon,\, T>0, \,\gamma\in(-2,0]$, and $\eta_m^N(0) = \min_{i\neq j}|v_0^i - v_0^j|>0$. Then, there is a constant $C = C(\varepsilon, T, M_2(\mu_0^N))>0$ such that the minimum inter-particle distance decays with exponential rate
		\[
		\eta_m^N(t) \gtrsim \eta_m^N(0)\exp\{-Ct\}, \quad \forall t\in[0,T].
		\]
	\end{proposition}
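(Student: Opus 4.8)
The plan is to follow a particle at the minimum and close a Gr\"onwall inequality, using the Lipschitz bound on $U^\varepsilon$ already available in this range of $\gamma$. First I would fix $t\in[0,T]$ and pick indices $i=i(t),\,j=j(t)$ with $|v^i(t)-v^j(t)|=\eta_m^N(t)$. Since $\eta_m^N$ is the pointwise minimum of the finitely many $C^1$ functions $t\mapsto|v^i(t)-v^j(t)|$, it is locally Lipschitz on $[0,T]$, hence differentiable for a.e.\ $t$, and at such $t$ its derivative equals that of an active branch. Differentiating along that branch, using the ODE system~\eqref{eq:epsparticle} and Cauchy--Schwarz, gives for a.e.\ $t\in[0,T]$
\[
\frac{\rd}{\rd t}\eta_m^N(t)\ge-\bigl|U^\varepsilon[\mu^N(t)](v^i(t))-U^\varepsilon[\mu^N(t)](v^j(t))\bigr|.
\]

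The next step is to bound the right-hand side by a constant multiple of $\eta_m^N(t)$. Since $\gamma\in(-2,0]$ and $\mu^N(t)\in\mathscr{P}_c(\R^d)$, \Cref{prop:ULipLp} applies directly (no use of~\ref{A:gamma} is needed in this regime) and yields
\[
\bigl|U^\varepsilon[\mu^N](v^i)-U^\varepsilon[\mu^N](v^j)\bigr|\le C_\varepsilon M_{2+\gamma}(\mu^N)\bigl(\japangle{v^i}^{2+\gamma}+\japangle{v^j}^{2+\gamma}\bigr)\,|v^i-v^j|.
\]
It then remains to bound the prefactor uniformly on $[0,T]$. For the moment factor, \Cref{cor:energyprop} with $f=g=\mu^N$ gives conservation $M_2(\mu^N(t))=M_2(\mu_0^N)$, and since $\japangle{v}\ge1$ and $0\le 2+\gamma\le2$ we have $\japangle{v}^{2+\gamma}\le\japangle{v}^2$, hence $M_{2+\gamma}(\mu^N(t))\le M_2(\mu_0^N)$. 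For the pointwise factors, \Cref{lem:growthPhi} controls the flow map $\Phi_{\mu^N}$ generating $\mu^N$, so that $\japangle{v^i(t)}=\japangle{\Phi_{\mu^N}(t,v_0^i)}\le\japangle{R_N}\exp\{C_\varepsilon M_2(\mu_0^N)T\}$ for all $t\in[0,T]$, where $R_N=\max_i|v_0^i|$. Combining, there is a constant $C=C(\varepsilon,T,M_2(\mu_0^N))>0$ (also depending on the initial support radius $R_N$) such that $\tfrac{\rd}{\rd t}\eta_m^N(t)\ge-C\,\eta_m^N(t)$ for a.e.\ $t\in[0,T]$.

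Finally, since $\eta_m^N$ is absolutely continuous on $[0,T]$, integrating this differential inequality — equivalently, applying Gr\"onwall to $t\mapsto e^{Ct}\eta_m^N(t)$ — gives $\eta_m^N(t)\ge\eta_m^N(0)e^{-Ct}$ on $[0,T]$, which is the claim, and in particular $\eta_m^N(t)>0$ for all finite $t$ whenever $\eta_m^N(0)>0$. The only real subtlety is the lack of smoothness of the minimum $\eta_m^N$: I would handle it either by the a.e.-differentiability plus absolute continuity argument just sketched, or by a lower Dini derivative comparison showing directly that $e^{Ct}\eta_m^N(t)$ is nondecreasing; both are standard. All the analytic content — the uniform Lipschitz constant for $U^\varepsilon$, the conservation of the second moment, and the boundedness of the particle trajectories on $[0,T]$ — is already supplied by \Cref{prop:ULipLp}, \Cref{cor:energyprop}, and \Cref{lem:growthPhi}, so no new estimate is required.
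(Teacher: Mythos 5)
Your proof is correct and follows essentially the same route as the paper: a differential inequality for the minimizing pair, a Lipschitz-in-space bound on $U^\varepsilon[\mu^N]$ with a prefactor made uniform on $[0,T]$ via boundedness of the trajectories (\Cref{lem:growthPhi}), and Gr\"onwall. The only cosmetic difference is that you invoke \Cref{prop:ULipLp} together with the moment bound from \Cref{cor:energyprop}, whereas the paper applies \Cref{lem:contourestimate} directly on the bounded support of $\mu^N$ — the same estimate in different packaging — and you treat the nonsmoothness of the minimum more carefully than the paper does.
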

	\begin{proof}
		Choose two indices $i, \, j=1,\dots, N$ such that $\eta_m^N = |v^i - v^j|$ where we will suppress the time dependence for simplicity. We have
		\begin{align*}
			\frac{\rd}{\rd t}|v^i-v^j| &\ge -|U^\varepsilon[\mu^N](v^i) - U^\varepsilon[\mu^N](v^j)| \\
			&\ge -\int_{\R^3}|K_{\mu^N}(v^i, w) - K_{\mu^N}(v^j, w)| d\mu^N(w).
		\end{align*}
		The goal is to estimate the integral. Firstly, we simplify the integration by recalling that supp$\mu^N$ is bounded. Indeed, setting $R = \max_{i=1,\dots,N}|v_0^i|$, \Cref{lem:growthPhi} implies
		\[
		\japangle{v^i} = \japangle{\Phi_{\mu^N}(v_0^i)}\le R \exp (C_\varepsilon M_2(\mu_0^N)t), \quad \forall t\in[0,T].
		\]
		Applying~\Cref{lem:contourestimate} to the difference of the kernels, we have
		\begin{align*}
			&\quad \int_{\text{supp}\mu^N}|K_{\mu^N}(v^i,w) - K_{\mu^N}(v^j,w)| d\mu^N(w) \\ &\lesssim_{\varepsilon,\gamma}|v^i-v^j|\int_{\text{supp}\mu^N}\max\left(
			|v^i-w|^{2+\gamma}, |v^j - w|^{2+\gamma}
			\right)d\mu^N(w)   \\
			&\lesssim_\gamma R\exp(C_\varepsilon M_2(\mu_0^N) T)|v^i-v^j|.
		\end{align*}
	\end{proof}
	
	In the case $\gamma\in(-3,-2)$, we can no longer apply~\Cref{thm:existmf} directly, since it requires an $L^p$ assumption on the initial data $\mu_0^N$ which is not valid for empirical measures. In particular, the vector field is no longer Lipschitz regular (c.f.~\Cref{prop:ULipLp}) so we must make do with H\"older regularity (c.f.~\Cref{lem:UHold}).
	\begin{proof}[Proof of~\Cref{lem:existparticle} for $-3< \gamma< -2$]
		We revisit the proof of Peano's theorem using Schauder's fixed point theorem to construct solutions to~\eqref{eq:epscty}. We set $X = C([0,T];\R^d)$ and define the solution map $S: X \to X$ by
		\[
		(Sv^i)(t) := v_0^i + \int_0^t U^\varepsilon[\mu^N(s)](v^i(s))\, \rd s, \quad i=1,\dots,N.
		\]
		This is well-defined and certainly $Sv^i\in X$ for each $v^i\in X$ owing to the uniform bound for $U^\varepsilon$ in~\Cref{prop:boundsKg} when $\gamma\in(-3,-2)$. We seek to prove 1) $S$ is continuous and 2) $S(X)$ is pre-compact.
		
		\ul{$S$ is continuous:} For every $i=1,\dots,N$ fix $v^{i,n}, \, v^i\in X$ such that $v^{i,n}\to v^i$ in $X$. We label their corresponding empirical measures
		\[
		\mu^N(t) = \sum_{i=1}^Nm_i \delta_{v^i(t)}, \quad \mu^{N,n}(t) = \sum_{i=1}^N m_i \delta_{v^{i,n}(t)}.
		\]
		We have the estimate
		\begin{align*}
			&\quad    |(Sv^{i,n})(t) - (Sv^i)(t)| \le \int_0^t\left|U^\varepsilon[\mu^{N,n}(s)](v^{i,n}(s)) - U^\varepsilon[\mu^N(s)](v^i(s))\right|\rd s     \\
			&\le \int_0^t\left|U^\varepsilon[\mu^{N,n}(s)](v^{i,n}(s)) - U^\varepsilon[\mu^{N,n}(s)](v^i(s))\right|  \\
			&\qquad \qquad \qquad \qquad \qquad \qquad + \left|U^\varepsilon[\mu^{N,n}(s)](v^i(s)) - U^\varepsilon[\mu^N(s)](v^i(s))\right| \rd s.
		\end{align*}
		Applying~\Cref{lem:UHold} to the first difference and~\Cref{lem:gfvel} to the second difference without being precise about the constants, we obtain
		\begin{align*}
			&\quad |(Sv^{i,n})(t) - (Sv^i)(t)| \lesssim_{\varepsilon,\gamma} \int_0^t |v^{i,n}(s) - v^i(s)|^{3+\gamma} \rd s \\
			&\quad +\int_0^t W_\infty(\mu^{N,n}(s), \mu^N(s)) + W_\infty(\mu^{N,n}(s), \mu^N(s))^{3+\gamma}\rd s.
		\end{align*}
		The first integral converges to 0 as $n\to \infty$. As well, the infinite Wasserstein distance is also continuous with respect to the particles; $v^{i,n}\to v^i$ in $X$ for every $i=1,\dots,N$ as $n\to \infty$ implies $W_\infty(\mu^{N,n}(s), \mu^N(s)) \to 0$ as $n\to \infty$.
		
		\ul{$S(X)$ is pre-compact:} We fix $v^i\in X$ for every $i=1,\dots,N$ in this step. Firstly, it is clear that $S(X)$ is bounded using~\Cref{prop:boundsKg}
		\[
		|(Sv)(t)|\le |v(0)| + C_\varepsilon t, \quad \forall v\in X.
		\]
		Turning to equicontinuity, fix $t_1\le t_2$ both in $[0,T]$. Applying~\Cref{prop:boundsKg} again, we have 
		\[
		|(Sv^i)(t_1) - (Sv^i)(t_2)| \le \int_{t_1}^{t_2}|U^\varepsilon[\mu^N(s)](v^i(s))|\, \rd s \lesssim_\varepsilon |t_1-t_2|.
		\]
	\end{proof}
	\section{Step 3}
	\label{sec:step3}
	In this appendix, we prove step~\ref{step3} from~\Cref{sec:mfl} which establishes~\Cref{thm:CCH}. The results of~\Cref{sec:step1,sec:step2} yield
	\begin{align}
		\label{eq:etacouple}
		\begin{split}
			\frac{\rd}{\rd t}\eta^N &\lesssim_{\varepsilon} \eta^N (1 + \|f \|_{L^p})(1 + \left(\eta^N\right)^\frac{d}{p'}\left(\eta_m^N\right)^{1+\gamma}), \\
			\frac{\rd}{\rd t}\eta_m^N &\gtrsim_\varepsilon - \eta_m^N (1 + \| f\|_{L^p})(1 + \left(\eta^N\right)^\frac{d}{p'}\left(\eta_m^N\right)^{1+\gamma}),
		\end{split}
	\end{align}
	when $t\in[0, \min (T_m,T^N)).$ If $\left(\eta^N\right)^\frac{d}{p'}\left(\eta_m^N\right)^{1+\gamma} \le 1$, then we immediately obtain
	\begin{equation}
		\label{eq:growdecay}
		\eta^N(t) \le \eta^N(0) e^{C(1 + \| f\|_{L^p})t}, \quad \eta_m^N(t) \ge \eta_m^N(0) e^{-C(1 + \| f\|_{L^p})t}, \quad \forall t\in[0, \min(T_m,T^N)).
	\end{equation}
	We wish to show that~\eqref{eq:growdecay} holds for all $t\in[0,T_m)$ as $N\to \infty$ which amounts to showing $T^N > T_m$ when $N$ is sufficiently large. Define first
	\[
	a(t) := \frac{\eta^N(t)}{\eta^N(0)}, \quad \eta_m(t) := \frac{\eta_m^N(t)}{\eta_m^N(0)}, \quad \xi_N := \eta^N(0)^\frac{d}{p'}\eta_m^N(0)^{1+\gamma}.
	\]
	Thus, we rewrite~\eqref{eq:etacouple} in terms of $a, \, b, $ and $\xi_N$
	\begin{align*}
		\frac{\rd}{\rd t}a &\lesssim_{\varepsilon} a (1 + \|f \|_{L^p})(1 + \xi_N a^\frac{d}{p'}b^{1+\gamma}), \\
		\frac{\rd}{\rd t}b &\gtrsim_\varepsilon - b (1 + \| f\|_{L^p})(1 +\xi_N a^\frac{d}{p'}b^{1+\gamma}).
	\end{align*}
	Since $a(0) = b(0) = 1$ and we assume by~\eqref{eq:initcond} $\xi_N \to 0$ as $N\to \infty$, when $N$ is sufficiently large, we can find $T_*^N (\le T^N)$ such that
	\begin{equation}
		\label{eq:bddby1}
		\xi_N a^\frac{d}{p'}b^{1+\gamma} \le 1, \quad \forall t\in[0,T_*^N].
	\end{equation}
	Now by~\eqref{eq:growdecay}, we have similar estimates
	\[
	a(t) \le e^{C(1+\|f\|_{L^p})t}, \quad b(t) \ge e^{-C(1+\|f\|_{L^p})t}, \quad \forall t\in[0,T_*^N].
	\]
	Returning to~\eqref{eq:bddby1}, we obtain an estimate for $T_*^N$ given by
	\[
	\xi_N e^{C(1+\|f\|_{L^p})\left(\frac{d}{p'} - (1+\gamma)\right)t} \le 1 \iff t \le - \frac{\log \xi_N}{C(1+\|f\|_{L^p})\left(\frac{d}{p'} - (1+\gamma)\right)}.
	\]
	This means that $T_*^N$ has the lower bound
	\[
	- \frac{\log \xi_N}{C(1+\|f\|_{L^p})\left(\frac{d}{p'} - (1+\gamma)\right)} \le T_*^N.
	\]
	However, since~\eqref{eq:initcond} means $\xi_N \to 0$ as $N\to \infty$, this implies
	\[
	\liminf_{N\to \infty}T_*^N = \infty.
	\]
	Since $T_*^N < T^N$, we have that $T^N \ge T_m$ for $N\gg 1$ sufficiently large.
\end{appendices}
\bibliographystyle{abbrv}
\bibliography{refs}

\begin{thebibliography}{10}

\bibitem{B73}
J.~Barros-Neto.
\newblock {\em An introduction to the theory of distributions}.
\newblock Marcel Dekker, Inc. New York, 1973.
\newblock Pure and Applied Mathematics, 14.

\bibitem{BLR11}
A.~L. Bertozzi, T.~Laurent, and J.~Rosado.
\newblock {$L^p$} theory for the multidimensional aggregation equation.
\newblock {\em Comm. Pure Appl. Math.}, 64(1):45--83, 2011.

\bibitem{BCC11}
F.~Bolley, J.~A. Ca\~{n}izo, and J.~A. Carrillo.
\newblock Stochastic mean-field limit: non-{L}ipschitz forces and swarming.
\newblock {\em Math. Models Methods Appl. Sci.}, 21(11):2179--2210, 2011.

\bibitem{BH}
W.~Braun and K.~Hepp.
\newblock The {V}lasov dynamics and its fluctuations in the {$1/N$} limit of
  interacting classical particles.
\newblock {\em Comm. Math. Phys.}, 56(2):101--113, 1977.

\bibitem{BJS22}
D.~Bresch, P.-E. Jabin, and J.~Soler.
\newblock A new approach to the mean-field limit of vlasov-fokker-planck
  equations.
\newblock {\em arXiv preprint arXiv:2203.15747}, 2022.

\bibitem{BJW19}
D.~Bresch, P.-E. Jabin, and Z.~Wang.
\newblock On mean-field limits and quantitative estimates with a large class of
  singular kernels: application to the {P}atlak-{K}eller-{S}egel model.
\newblock {\em C. R. Math. Acad. Sci. Paris}, 357(9):708--720, 2019.

\bibitem{CCH14}
J.~A. Carrillo, Y.-P. Choi, and M.~Hauray.
\newblock The derivation of swarming models: mean-field limit and {W}asserstein
  distances.
\newblock In {\em Collective dynamics from bacteria to crowds}, volume 553 of
  {\em CISM Courses and Lect.}, pages 1--46. Springer, Vienna, 2014.

\bibitem{CCHS19}
J.~A. Carrillo, Y.-P. Choi, M.~Hauray, and S.~Salem.
\newblock Mean-field limit for collective behavior models with sharp
  sensitivity regions.
\newblock {\em J. Eur. Math. Soc. (JEMS)}, 21(1):121--161, 2019.

\bibitem{CCP18}
J.~A. Carrillo, K.~Craig, and F.~S. Patacchini.
\newblock A blob method for diffusion.
\newblock {\em Calc. Var. Partial Differential Equations}, 58(2):Paper No. 53,
  53, 2019.

\bibitem{CDDW20}
J.~A. Carrillo, M.~G. Delgadino, L.~Desvillettes, and J.~Wu.
\newblock The landau equation as a gradient flow.
\newblock 2022.

\bibitem{CDW22}
J.~A. Carrillo, M.~G. Delgadino, and J.~Wu.
\newblock Boltzmann to {L}andau from the gradient flow perspective.
\newblock {\em Nonlinear Anal.}, 219:Paper No. 112824, 49, 2022.

\bibitem{CDFLS11}
J.~A. Carrillo, M.~DiFrancesco, A.~Figalli, T.~Laurent, and D.~Slep\v{c}ev.
\newblock Global-in-time weak measure solutions and finite-time aggregation for
  nonlocal interaction equations.
\newblock {\em Duke Math. J.}, 156(2):229--271, 2011.

\bibitem{CHWW20}
J.~A. Carrillo, J.~Hu, L.~Wang, and J.~Wu.
\newblock A particle method for the homogeneous {L}andau equation.
\newblock {\em J. Comput. Phys. X}, 7:100066, 24, 2020.

\bibitem{CS18}
Y.-P. Choi and S.~Salem.
\newblock Propagation of chaos for aggregation equations with no-flux boundary
  conditions and sharp sensing zones.
\newblock {\em Math. Models Methods Appl. Sci.}, 28(2):223--258, 2018.

\bibitem{CS19}
Y.-P. Choi and S.~Salem.
\newblock Collective behavior models with vision geometrical constraints:
  truncated noises and propagation of chaos.
\newblock {\em J. Differential Equations}, 266(9):6109--6148, 2019.

\bibitem{DLD92}
P.~Degond and B.~Lucquin-Desreux.
\newblock The {F}okker-{P}lanck asymptotics of the {B}oltzmann collision
  operator in the {C}oulomb case.
\newblock {\em Math. Models Methods Appl. Sci.}, 2(2):167--182, 1992.

\bibitem{dobru}
R.~L. Dobru\v{s}in.
\newblock Vlasov equations.
\newblock {\em Funktsional. Anal. i Prilozhen.}, 13(2):48--58, 96, 1979.

\bibitem{Due16}
M.~Duerinckx.
\newblock Mean-field limits for some {R}iesz interaction gradient flows.
\newblock {\em SIAM J. Math. Anal.}, 48(3):2269--2300, 2016.

\bibitem{G03}
F.~Golse.
\newblock The mean-field limit for the dynamics of large particle systems.
\newblock In {\em Journ\'{e}es ``\'{E}quations aux {D}\'{e}riv\'{e}es
  {P}artielles''}, pages Exp. No. IX, 47. Univ. Nantes, Nantes, 2003.

\bibitem{G16}
F.~Golse.
\newblock On the dynamics of large particle systems in the mean field limit.
\newblock In {\em Macroscopic and large scale phenomena: coarse graining, mean
  field limits and ergodicity}, volume~3 of {\em Lect. Notes Appl. Math.
  Mech.}, pages 1--144. Springer, [Cham], 2016.

\bibitem{GZ}
M.~P. Gualdani and N.~Zamponi.
\newblock Spectral gap and exponential convergence to equilibrium for a
  multi-species {L}andau system.
\newblock {\em Bull. Sci. Math.}, 141(6):509--538, 2017.

\bibitem{HIpre}
D.~Han-Kwan and M.~Iacobelli.
\newblock From {N}ewton's second law to {E}uler's equations of perfect fluids.
\newblock {\em Proc. Amer. Math. Soc.}, 149(7):3045--3061, 2021.

\bibitem{H09}
M.~Hauray.
\newblock Wasserstein distances for vortices approximation of {E}uler-type
  equations.
\newblock {\em Math. Models Methods Appl. Sci.}, 19(8):1357--1384, 2009.

\bibitem{H14}
M.~Hauray.
\newblock Mean field limit for the one dimensional vlasov-poisson equation.
\newblock {\em S{\'e}minaire Laurent Schwartz—EDP et applications}, pages
  1--16, 2012.

\bibitem{HJ07}
M.~Hauray and P.-E. Jabin.
\newblock {$N$}-particles approximation of the {V}lasov equations with singular
  potential.
\newblock {\em Arch. Ration. Mech. Anal.}, 183(3):489--524, 2007.

\bibitem{HJ15}
M.~Hauray and P.-E. Jabin.
\newblock Particle approximation of {V}lasov equations with singular forces:
  propagation of chaos.
\newblock {\em Ann. Sci. \'{E}c. Norm. Sup\'{e}r. (4)}, 48(4):891--940, 2015.

\bibitem{J14}
P.-E. Jabin.
\newblock A review of the mean field limits for {V}lasov equations.
\newblock {\em Kinet. Relat. Models}, 7(4):661--711, 2014.

\bibitem{JW16}
P.-E. Jabin and Z.~Wang.
\newblock Mean field limit and propagation of chaos for {V}lasov systems with
  bounded forces.
\newblock {\em J. Funct. Anal.}, 271(12):3588--3627, 2016.

\bibitem{JW17}
P.-E. Jabin and Z.~Wang.
\newblock Mean field limit for stochastic particle systems.
\newblock In {\em Active particles. {V}ol. 1. {A}dvances in theory, models, and
  applications}, Model. Simul. Sci. Eng. Technol., pages 379--402.
  Birkh\"{a}user/Springer, Cham, 2017.

\bibitem{JW18}
P.-E. Jabin and Z.~Wang.
\newblock Quantitative estimates of propagation of chaos for stochastic systems
  with {$W^{-1,\infty}$} kernels.
\newblock {\em Invent. Math.}, 214(1):523--591, 2018.

\bibitem{LP17}
D.~Lazarovici and P.~Pickl.
\newblock A mean field limit for the {V}lasov-{P}oisson system.
\newblock {\em Arch. Ration. Mech. Anal.}, 225(3):1201--1231, 2017.

\bibitem{Landau}
E.~M. Lifshitz.
\newblock {\em Perspectives in theoretical physics}.
\newblock Pergamon Press, Oxford, 1992.
\newblock The collected papers of E. M. Lifshitz [E. M. Lifshits], Edited by L.
  P. Pitaevski\u{\i}, With an introduction by D. ter Haar, With a biography of
  Lifshitz by Ya. B. Zel'dovich and M. I. Kaganov, translated by J. B. Sykes.

\bibitem{Neun}
H.~Neunzert.
\newblock An introduction to the nonlinear {B}oltzmann-{V}lasov equation.
\newblock In {\em Kinetic theories and the {B}oltzmann equation ({M}ontecatini,
  1981)}, volume 1048 of {\em Lecture Notes in Math.}, pages 60--110. Springer,
  Berlin, 1984.

\bibitem{PS17}
M.~Petrache and S.~Serfaty.
\newblock Next order asymptotics and renormalized energy for {R}iesz
  interactions.
\newblock {\em J. Inst. Math. Jussieu}, 16(3):501--569, 2017.

\bibitem{RR98}
S.~T. Rachev and L.~R\"{u}schendorf.
\newblock {\em Mass transportation problems. {V}ol. {I}}.
\newblock Probability and its Applications (New York). Springer-Verlag, New
  York, 1998.
\newblock Theory.

\bibitem{S15}
F.~Santambrogio.
\newblock {\em Optimal transport for applied mathematicians}, volume~87 of {\em
  Progress in Nonlinear Differential Equations and their Applications}.
\newblock Birkh\"{a}user/Springer, Cham, 2015.
\newblock Calculus of variations, PDEs, and modeling.

\bibitem{S20}
S.~Serfaty.
\newblock Mean field limit for {C}oulomb-type flows.
\newblock {\em Duke Math. J.}, 169(15):2887--2935, 2020.
\newblock With an appendix by Mitia Duerinckx and Serfaty.

\bibitem{Spohn}
H.~Spohn.
\newblock {\em Large scale dynamics of interacting particles}.
\newblock Springer Science \& Business Media, 2012.

\bibitem{Vi98}
C.~Villani.
\newblock On a new class of weak solutions to the spatially homogeneous
  {B}oltzmann and {L}andau equations.
\newblock {\em Arch. Rational Mech. Anal.}, 143(3):273--307, 1998.

\end{thebibliography}
\end{document}